\def\version{revised May 23, 2008}
\def\UseSection{
        \numberwithin{equation}{section}
    \theoremstyle{plain}
        \newtheorem{theorem}    {Theorem}[section]
        \DefineTheorems 
}
\def\DefineTheorems{
    
    \newtheorem{lemma}      [theorem] {Lemma}
    
    \newtheorem{prop}       [theorem] {Proposition}
    
    \newtheorem{cor}        [theorem] {Corollary}
    \newtheorem{ass}        [theorem] {Assumption}
    \newtheorem{claim}      [theorem] {Claim}
    \theoremstyle{definition}
    \newtheorem{defn}       [theorem] {Definition}

    \theoremstyle{definition}

}
\newcommand{\bt}   {\begin{theorem}}
\newcommand{\et}   {\end  {theorem}}
\newcommand{\bl}   {\begin{lemma}}
\newcommand{\el}   {\end  {lemma}}
\newcommand{\bp}   {\begin{prop}}
\newcommand{\ep}   {\end  {prop}}
\newcommand{\bc}   {\begin{cor}}
\newcommand{\ec}   {\end  {cor}}
\newcommand{\bd}   {\begin{defn}}
\newcommand{\ed}   {\end  {defn}}
\newcommand{\ba}   {\begin{array}}
\newcommand{\ea}   {\end  {array}}
\newcommand{\be}   {\begin{enumerate}}
\newcommand{\ee}   {\end  {enumerate}}
\newcommand{\bi}   {\begin{itemize}}
\newcommand{\ei}   {\end  {itemize}}
\def\eq#1\en{\begin{equation}#1\end{equation}}
\def\eqsplit#1\ensplit{
    \begin{equation}\begin{split}#1\end{split}\end{equation}
    }
\def\eqalign#1\enalign{
    \begin{align}#1\end{align}
    }
\def\eqmul#1\enmul{
    \begin{multline}#1\end{multline}
    }
\newcommand{\eqarrstar} {\begin{eqnarray*}}
\newcommand{\enarrstar} {\end{eqnarray*}}
\newcommand{\eqarray}   {\begin{eqnarray}}
\newcommand{\enarray}   {\end{eqnarray}}
\newcommand{\nnb}   {\nonumber \\}
\newcommand{\lbeq}[1]  {\label{e:#1}}
\newcommand{\labelcounter}[2]{{%
    \stepcounter{#1}
    \protected@write\@auxout{}%
    {\string\newlabel{#2}{{\csname the#1\endcsname}{\thepage}}}%
    {\ref{#2}}
    }}
\newcommand{\sss}   { \scriptscriptstyle }
\newcommand{\Ebold} {{\mathbb E}}
\newcommand{\Rbold} {{\mathbb R}}
\newcommand{\Zbold} {{\mathbb Z}}
\newcommand{\Ccal}   {\mathcal{C}}
\newcommand{\Hcal}   {\mathcal{H}}
\newcommand{\Wcal}   {\mathcal{W}}
\newcommand{\Fhat} {{\hat{F} }}
\newcommand{\Nhat} {{\hat{N} }}
\newcommand{\Rd}    {{ {\Rbold}^d}}
\newcommand{\Zd}    {{ {\Zbold}^d }}
\newcommand{\spose}[1] {{\hbox to 0pt{#1\hss}} }
\newcommand{\ltapprox} {\mathrel{\spose{\lower 3pt\hbox{$\mathchar"218$}}
 \raise 2.0pt\hbox{$\mathchar"13C$}}}
\newcommand{\gtapprox} {\mathrel{\spose{\lower 3pt\hbox{$\mathchar"218$}}
 \raise 2.0pt\hbox{$\mathchar"13E$}}}
\newcommand{\nin}  {{ \not\in }}
\newcommand{\C}     {\mathbb{C}}
\newcommand{\R}     {\mathbb{R}}
\renewcommand{\P}   {\mathbb{P}}
\newcommand{\E}     {\mathbb{E}}
\newcommand\1{\mathbbm{1}}
\newcommand{\citeAkira}[1]{\cite[#1]{Sakai07}}  
\renewcommand{\d}{{\rm d}}
\newcommand{\ua}{\nearrow}
\newcommand{\da}{\searrow}
\newcommand{\eps}{\varepsilon}
\newcommand{\ssup}[1] {{\scriptscriptstyle{({#1}})}}
\newcommand{\sN}{\ssup{N}}
\newcommand{\sM}{{\scriptscriptstyle{M}}}
\newcommand{\Gk}{\hat{G}_z(k)}
\newcommand{\GN}{\hat{G}_z(0)}
\newcommand{\Dk}{\hat{D}(k)}
\newcommand{\dk}{\frac{\operatorname{d}\!k}{(2\pi)^d}}
\newcommand{\e}{\operatorname{e}}
\newcommand{\PsiK}{\hat{\Psi}_z(k)}
\newcommand{\PhiK}{\hat{\Phi}_z(k)}
\newcommand{\PsiN}{\hat{\Psi}_z(0)}
\newcommand{\PhiN}{\hat{\Phi}_z(0)}
\newcommand{\Td}{{\left[-\pi,\pi\right)^d}}
\newcommand{\ClK}{\hat{C}_{\lambda_z}(k)}
\newcommand{\Ci}{\hat{C}_{\lambda_z}(k)^{-1}} 
\newcommand{\ClN}{\hat{C}_{\lambda_z}(0)}
\newcommand{\Cl}{\hat{C}_{\lambda_z}}
\newcommand{\G}{\hat{G}}
\newcommand{\tG}{\tilde{G}}
\newcommand{\const}{{\operatorname {const}\,}}
\newcommand{\Ul}{U_{\lambda_z}(k,l)}
\renewcommand{\phi}{\varphi}
\newcommand{\fatphi}{\boldsymbol{\varphi}}
\newcommand{\gs}{\gamma_{\rm \sss S}}
\newcommand{\es}{\eta_{\rm \sss S}}
\newcommand{\gp}{\gamma_{\rm \sss P}}
\renewcommand{\bp}{\beta_{\rm \sss P}}
\newcommand{\dep}{\delta_{\rm \sss P}}
\newcommand{\hatdep}{\hat\delta_{\rm \sss P}}
\renewcommand{\ep}{\eta_{\rm \sss P}}
\newcommand{\gi}{\gamma_{\rm \sss I}}
\renewcommand{\bi}{\beta_{\rm \sss I}}
\newcommand{\di}{\delta_{\rm \sss I}}
\newcommand{\etai}{\eta_{\rm \sss I}}
\newcommand{\torus}{\mathbb{T}}
\newcommand{\GrR}{G_{z,{\torus_r}}^{\sss (R)}}
\newcommand{\chiRR}{\chi_{\torus_r}^{\sss (R)}}
\newcommand{\PrR}{\mathbb{P}_{z,{\torus_r}}^{\sss (R)}}
\newcommand{\ErR}{\mathbb{E}_{z,{\torus_r}}^{\sss (R)}}
\begin{document}

\thispagestyle{empty}
\vspace{1cm}
\centerline{\LARGE \bf Mean-field behavior for long- and finite range}
\vspace{0.2cm}
\centerline{\LARGE \bf Ising model, percolation and self-avoiding walk}
\vspace{1cm}

\centerline {{\sc Markus Heydenreich$^1$}, {\sc Remco van der Hofstad$^1$} and {\sc Akira Sakai$^2$}}
\vspace{.6cm}

\centerline{\em $^1$Eindhoven University of Technology,}
\centerline{\em Department of Mathematics and Computer Science,}
\centerline{\em P.O.~Box 513, 5600~MB Eindhoven, The~Netherlands}
\centerline{\tt m.o.heydenreich@tue.nl, r.w.v.d.hofstad@tue.nl}
\vspace{.3cm}

\centerline{\em $^2$Hokkaido University,}
\centerline{\em Creative Research Initiative ``Sousei'',}
\centerline{\em North 21, West 10, Kita-ku}
\centerline{\em Sapporo 001-0021, Japan}
\centerline{\tt sakai@cris.hokudai.ac.jp}
\vspace{1cm}

\centerline{\small(\version)}
\vspace{.6cm}

\begin{quote}
  {\small {\bf Abstract:}}
We consider self-avoiding walk, percolation and the Ising model  with long and finite range.
By means of the lace expansion we prove mean-field behavior for these models if $d>2(\alpha\wedge2)$ for self-avoiding walk and the Ising model, and $d>3(\alpha\wedge2)$ for percolation,
where $d$ denotes the dimension and $\alpha$ the power-law decay exponent of the coupling function.
We provide a simplified analysis of the lace expansion based on the trigonometric approach in Borgs et al.\ \cite{BorgsChayeHofstSladeSpenc05b}.
\end{quote}

\vspace{0.5cm}

\noindent
{\it MSC 2000.} 82B41, 82B43, 60K35.

\noindent
{\it Keywords and phrases.} Lace expansion, Ising model, percolation, self-avoiding walk, critical exponent, mean-field behavior. 

\section{Introduction}

\subsection{Motivation and overview}
Since its invention in 1985 \cite{BrydgSpenc85}, the \emph{lace expansion} has become a powerful tool for proving mean-field behavior in various spatial stochastic systems, such as the self-avoiding walk, percolation, oriented percolation, the contact process, lattice trees and -animals, and the Ising model.
This paper provides a generalized lace expansion approach that holds for self-avoiding walk, percolation and the Ising model.
We consider the classical nearest-neighbor model as well as various spread-out cases.
Of particular interest are those spread-out models where the underlying step distribution has infinite variance, so-called \emph{long-range} models.
We show that a sufficiently long range can reduce the upper critical dimension,
above which the system shows mean-field behavior.

We shall not perform the complete lace expansion here,
but rather use bounds on the lace expansion coefficients proved elsewhere.
Nevertheless, we give an analysis of the lace expansion inspired by \cite{BorgsChayeHofstSladeSpenc05b},
which is simplified compared to previous work, and generalized so that it deals with long-range models.

Using this generalized framework, we do the analysis of the lace expansion in such a way that it holds for \emph{any} model provided that the expansion has a specific form and certain bounds on the lace expansion coefficients are satisfied (see Section \ref{sectFramework}).
These bounds are proved to follow from a related random walk condition, which is relatively simple to verify.

\subsection{The model}
We study self-avoiding walk, percolation and the Ising model on the hypercubic lattice $\Zd$.
We consider $\Zd$ as a complete graph, i.e., the graph with vertex set $\Zd$ and corresponding edge set $\Zd\times\Zd$. We will refer to the edges as \emph{bonds} and to the vertices as \emph{sites}.
We assign each (undirected) bond $\{x,y\}$ a weight $D(x-y)$, where $D$ is a probability distribution specified in Section \ref{sectPropD} below. If $D(x-y)=0$, then we can omit the bond $\{x,y\}$.

Our analysis is based on Fourier analysis.
Unless specified otherwise, $k$ will always denote an arbitrary element from the Fourier dual of the discrete lattice, which is the torus $\Td$.
The Fourier transform of a summable function $f\colon\Zd\to\C$ is defined by
$    \hat f(k)=\sum_{x\in\Zd}f(x)\,\e^{ik\cdot x}$.

\subsubsection{The step distribution $D$: 3 versions}\label{sectPropD}
Let $D$ denote a probability distribution on $\Zd$ that is symmetric under reflections in coordinate hyperplanes and rotations by $\pi/2$.
We refer to $D$ as a \emph{step} distribution, having in mind a random walker taking independent steps distributed according to $D$.
Without loss of generality we henceforth assume that there is no mass at the origin, i.e.\ $D(0)=0$.

In this paper, we consider three different versions of $D$.
While we explicitly state our main results for these versions,
they actually hold more generally under a random walk condition formulated in Assumption \ref{assumptionBeta} below.
The first version is the \emph{nearest-neighbor model}, where $D$ is the uniform distribution on the nearest neighbors, i.e.,
\begin{equation}\label{eqDefNNmodel}
D(x)=\frac{1}{2d}\1_{\{|x|=1\}},\qquad x\in\Zd.
\end{equation}
Here, and throughout the paper, we denote by $|\cdot|$ the Euclidian norm on $\Zd$
and $\1_E$ represents the indicator function of the event $E$.
This nearest-neighbor version of $D$ corresponds to the classical model for the study of self-avoiding walk, percolation, and the Ising model, see e.g.\ \cite{FernaFrohlSokal92,Grimm99,MadraSlade93}.

We further consider two versions of \emph{spread-out} models. They involve some spread-out parameter $L$, which is typically chosen large. In order to stress the $L$-dependence of $D$ we will write $D_{\sss L}$ in the definitions, but later omit the subscript. In the \emph{finite-variance spread-out} model we require $D_{\sss L}$ to satisfy the following conditions\footnote{These conditions coincide with Assumption D in \cite{HofstSlade02}.}:
\begin{enumerate}
    \item[(D1)] There is an $\eps>0$ such that $$\sum_{x\in\Zd}|x|^{2+\eps}D_{\sss L}(x)<\infty.$$
    \item[(D2)] There is a constant $C$ such that, for all $L\ge 1$, $$\|D_{\sss L}\|_\infty\le CL^{-d}.$$
    \item[(D3)] There exist constants $c_1$, $c_2>0$ such that
        \begin{eqnarray}
              \label{eqPropD1}
              1-\hat{D}_{\sss L}(k)&\ge\hspace{0.2cm} c_1L^2|k|^2 \qquad&\text{if $\|k\|_\infty\le L^{-1}$,}\\
              \label{eqPropD2}
              1-\hat{D}_{\sss L}(k)&>\hspace{0.2cm} c_2\qquad\qquad&\text{if $\|k\|_\infty\ge L^{-1}$,}\\
              \label{eqPropD3}
              1-\hat{D}_{\sss L}(k)&<\hspace{0.2cm} 2-c_2,\qquad &k\in\Td.
        \end{eqnarray}
\end{enumerate}

\paragraph{Example.} Let $h$ be a non-negative bounded function on $\Rd$ which is almost everywhere continuous, and symmetric under the lattice symmetries of reflection in coordinate hyperplanes and rotations by ninety degrees. Assume that there is an integrable function $H$ on $\Rd$ with $H(te)$ non-increasing in $t\ge0$ for every unit vector $e\in\Rd$, such that $h(x)\le H(x)$ for all $x\in\Rd$. Assume further that the $(2+\eps)$-th moment of $h$ exists for some $\eps>0$. The monotonicity and integrability hypotheses on $H$ imply that $\sum_xh(x/L)<\infty$ for all $L$, with $x/L=(x_1/L,\dots,x_d/L)$. Then
\begin{equation}\label{eqDefDwithH}
    D_{\sss L}(x)=\frac{h(x/L)}{\sum_{y\in\Zd}h(y/L)},\qquad x\in\Zd,
\end{equation}
obeys the conditions (D1)--(D3), whenever $L$ is large enough (cf.\ \cite[Appendix A]{HofstSlade02}).
For $h(x)=\1_{\{0<\|x\|_\infty\le 1\}}$ we obtain the \emph{uniform spread-out model} with
\begin{equation}\label{eqDefUSOmodel}
    D_{\sss L}(x)=\frac{1}{(2L+1)^d-1}\1_{\{0<\|x\|_\infty\le L\}},\qquad x\in\Zd.
\end{equation}

In the \emph{spread-out power-law model} we replace assumptions (D1) and (D3) by the condition that there exists an $\alpha>0$ such that
\begin{enumerate}
    \item[(D$1'$)] all $\eps>0$ satisfy $$\sum_{x\in\Zd}|x|^{\alpha-\eps}D_{\sss L}(x)<\infty;$$
    \item[(D$3'$)] there exist constants $c_1,c_2>0$ such that
        \begin{eqnarray}\label{eqPropDPowerLaw1}
              1-\hat{D}_{\sss L}(k)&\ge\hspace{0.2cm} c_1L^\alpha|k|^\alpha
              \qquad&\text{if $\|k\|_\infty\le L^{-1}$,}\\
              \label{eqPropDPowerLaw2}
              1-\hat{D}_{\sss L}(k)&>\hspace{0.2cm}c_2\qquad\qquad&\text{if $\|k\|_\infty\ge L^{-1}$,}\\
              \label{eqPropDPowerLaw3}
              1-\hat{D}_{\sss L}(k)&<\hspace{0.2cm}2-c_2,\qquad &k\in\Td.
        \end{eqnarray}
\end{enumerate}
The condition (D2)=(D$2'$) remains unchanged.

As an example, let $D_{\sss L}$ be of the form (\ref{eqDefDwithH}), but instead of the existence of the $(2+\eps)$-th moment of $h$, require $h$ to decay as $|x|^{-d-\alpha}$ as $|x|\to\infty$. In particular, there exist positive constants $c_h$ and $l_h$ such that
\begin{equation}\label{eqDefPowerLawDecay}
    h(x)\ge c_h|x|^{-d-\alpha},\qquad\text{whenever $|x|\ge l_h$.}
\end{equation}
In this setting,
the $\kappa^\text{th}$ moment $\sum_{x\in\Zd}|x|^\kappa D_{\sss L}(x)$
does not exist if $\kappa\ge\alpha$, but exists and equals $O(L^\alpha)$ if $\kappa<\alpha$.
Take e.g.\
\begin{equation}\label{eqDefHforPowerLaw}
    h(x)=(|x|\vee1)^{-d-\alpha},
\end{equation}
so that $D_{\sss L}$ has the form
\begin{equation}\label{eqDefDPowerLaw}
D_{\sss L}(x)=\frac{\left(|x/L|\vee1\right)^{-d-\alpha}}{\sum_{y\in\Zd}\left(|y/L|\vee1\right)^{-d-\alpha}},\qquad x\in\Zd.
\end{equation}
Chen and Sakai \cite[Prop.\ 1.1]{ChenSakai05} showed that, analogously to the finite-variance spread-out model, the spread-out power-law model (\ref{eqDefDPowerLaw}) satisfies conditions (D$1'$)--(D$3'$).

Note that the spread-out power-law model with parameter $\alpha>2$ satisfies the finite variance condition (D1), and hence is covered in the finite variance case.
For simplicity we further write $\alpha\wedge2$ indicating the minimum of $\alpha$ and $2$ in the spread-out power-law case, and 2 in the nearest-neighbor case or in the finite-variance spread-out case.

For the finite-variance spread-out model and the spread-out power-law model we require that the support of $D$ contains the nearest neighbors of $0$, see the discussion below (\ref{eqDefZc}).

We next introduce the models that we shall consider, i.e., self-avoiding walk, percolation and the Ising model.

\subsubsection{Self-avoiding walk}\label{sectSAW}
For every lattice site $x\in\Zd$, we denote by
\begin{equation}\label{eqDefWn}
    \Wcal_n(x)=\{(w_0,\dots,w_{n}) \mid w_0=0,\, w_n=x,\, w_i\in\Zd, 1\le i\le n-1\}
\end{equation}
the set of $n$-step walks from the origin $0$ to $x$. We call such a walk $w\in\Wcal_n(x)$ \emph{self-avoiding} if $w_i\neq w_j$ for $i\neq j$ with $i,j\in\{0,\dots,n\}$. We define $c_0(x)=\delta_{0,x}$ and, for $n\ge1$,
\begin{equation}\label{eqDefCn}
    c_n(x):=\sum_{w\in\Wcal_n(x)}\prod_{i=1}^nD(w_i-w_{i-1})\,\1_{\text{\{$w$ is self-avoiding\}}}.
\end{equation}
where $D$ is as in Section \ref{sectPropD}.

\subsubsection{Percolation}\label{sectPercolation}
In percolation we consider the set of \emph{bonds}, which are unordered pairs of lattice sites. We set each bond $\{x,y\}\in\Zd\times\Zd$ \emph{occupied}, independently of all other bonds, with probability $zD(y-x)$ and \emph{vacant} otherwise.
Thus for the nearest-neighbor model, each nearest-neighbor bond is occupied with probability $z/(2d)$.
The corresponding product measure is denoted by $\P_z$ with corresponding expectation $\E_z$.
We require $z\in[0,\|D\|_\infty^{-1}]$ to ensure that $zD(x-y)\le1$. We write $\{x\leftrightarrow y\}$ for the event that there exists a path of occupied bonds from $x$ to $y$.
When the event $\{x\leftrightarrow y\}$ occurs we call the vertices $x$ and $y$ \emph{connected}. For $x\in\Zd$, the set $\Ccal(x):=\{y\in\Zd\mid y\leftrightarrow x\}$ of connected vertices is called the \emph{cluster} of $x$. It is the size and geometry of these clusters that we are interested in. Due to the shift invariance of the model, we can restrict attention to the cluster at the origin $\Ccal:=\Ccal(0)$.

For $z$ small, $\Ccal$ is $\P_z$-a.s.\ finite, whereas for $d\ge2$ and large $z$, the probability that the size of the cluster $\Ccal$ is infinite,
\begin{equation}\label{eqDefTheta}
    \theta(z):=\P_z(|\Ccal|=\infty),
\end{equation}
is strictly greater than zero.
Since $z\mapsto\theta(z)$ is non-decreasing, there exists some critical value $z_c$ where this probability turns positive (see e.g.\ \cite{Grimm99}).

\subsubsection{Ising model}\label{sectIsing}
For the Ising model we consider the space $\{-1,1\}^{\Zd}$ of spin configurations on the hypercubic lattice,
with a probability distribution thereon.
For a formal definition, we consider a finite subset $\Lambda\subset\Zd$,
and for every spin configuration $\fatphi=\{\phi_x|x\in\Lambda\}\in\{-1,1\}^{\Lambda}$
the energy given by the Hamiltonian
\begin{equation}\label{eqDefHcalN}
    \Hcal_\Lambda(\fatphi)=-\!\!\!\!\sum_{\{x,y\}\in\Lambda\times\Lambda}J(y-x)\,\phi_x\,\phi_y,
\end{equation}
where $J$ and $D$ are related via the identity
\begin{equation}\label{eqDefJ}
    D(x)=\frac{\tanh(zJ(x))}{\sum_{y\in\Zd}\tanh(zJ(y))},
\end{equation}
and $z$ is the inverse temperature.
For example, in the nearest-neighbor case, $D=J$.
For the Ising model, $J$ is known as the \emph{spin-spin coupling}.
If $J\ge0$ (and hence $D\ge0$, as in the cases we consider) then the model is called \emph{ferromagnetic}.

\subsubsection{Two-point function and susceptibility}\label{sectTwoPointFunction}
We study self-avoiding walk, percolation and the Ising model in a unified way. For this, we need to introduce some notation.
We consider the function $G_z(x)$ for $x\in\Zd$ with
\begin{equation}\label{eqDefGsaw}
G_z(x)=\sum_{n=0}^\infty  c_n(x)\,z^n
\end{equation}
being the Green's function for self-avoiding walk, while for percolation
\begin{equation}\label{eqDefGperc}
G_z(x)=\P_z(0\leftrightarrow x)
\end{equation}
being the probability of the event that there is a path consisting of occupied edges from $0$ to $x$.
For the Ising model, we consider the spin correlation $G_z$ as the \emph{thermodynamic limit}
\begin{equation}\label{eqDefGising}
    G_z(x)=\lim_{\Lambda\ua\Zd}
    \frac{\sum_{\fatphi\in\{-1,1\}^{\Lambda}}\phi_0\,\phi_x\exp(-z\Hcal_\Lambda(\fatphi))}
    {\sum_{\fatphi\in\{-1,1\}^{\Lambda}}\exp(-z\Hcal_\Lambda(\fatphi))}.
\end{equation}
Here the limit is taken over any non-decreasing sequence of $\Lambda$'s converging to $\Zd$.
This limit exists and is independent from the chosen sequence of $\Lambda$'s due to Griffiths' second inequality \cite{Griff67a}.
We will refer to $G_z$ as the \emph{two-point function}.
This is inspired by the fact that $G_z(x)$ describes features of the models depending on the two points $0$ and $x$.

We further introduce the \emph{susceptibility} as
\begin{equation}\label{eqDefChi}
\chi(z):=\sum_{x\in\Zd}G_z(x).
\end{equation}
For percolation, the susceptibility is the expected cluster size $\chi(z)=\Ebold_z|\Ccal|$.

We define $z_c$, the critical value of $z$, as
\begin{equation}\label{eqDefZc}
    z_c:=\sup\,\left\{z\,|\,\chi(z)<\infty\right\}.
\end{equation}
For self-avoiding walk, $z_c$ is the convergence radius of the power series (\ref{eqDefGsaw}).
For percolation, $z_c$ is characterized by the explosion of the expected cluster size.
Menshikov \cite{Mensh86}, as well as Aizenman and Barsky \cite{AizenBarsk87}, showed that this characterization coincides with the critical value described in Section \ref{sectPercolation}.

For the spread-out models, we require that the support of $D$ contains the nearest neighbors of $0$.
In percolation and the Ising model, this enables a Peierls type argument showing that that a (finite) critical threshold $z_c\in(0,\infty)$ exists, where the susceptibility $\chi(z)$ diverges as $z\nearrow z_c$. This is exemplified in \cite[Sect.\ 2.1]{FernaFrohlSokal92} for the Ising model, and \cite[Sect.\ 1.4]{Grimm99} for percolation.

For the Ising model, we define the \emph{magnetization} $M$ to be
\begin{equation}\label{egDefMag}
    M(z,h)=\lim_{\Lambda\ua\Zd}
    \frac{\sum_{\fatphi\in\{-1,1\}^{\Lambda}}\,\phi_0\exp\{-z\Hcal_\Lambda(\fatphi)+h\sum_{y\in\Lambda}\phi_y\}}
    {\sum_{\fatphi\in\{-1,1\}^{\Lambda}}\exp\{-z\Hcal_\Lambda(\fatphi)+h\sum_{y\in\Lambda}\phi_y\}},
\end{equation}
and write $M(z,0^+)$ for the limit $\lim_{h\searrow0}M(z,h)$.
The magnetization gives rise to another characterization of $z_c$, namely
$z_c=\inf\{z\,|\,M(z,0^+)>0\}$.
As proved by Aizenman, Barsky and Fern{\'a}ndez \cite{AizenBarskFerna87}, this is equivalent to (\ref{eqDefZc}).

\subsubsection{Critical exponents and mean-field behavior}
All three models, self-avoiding walk, percolation and the Ising model,
exhibit a phase transition at some (model-dependent) critical value $z_c$.
One of the fundamental question in statistical mechanics is how models behave at and nearby this critical value.
We use the notion of \emph{critical exponents} to describe this behavior.
While the existence of these critical exponents is folklore,
there is no general argument proving this.

We write $f(z)\asymp g(z)$ if the ratio $f(z)/g(z)$ is bounded away from $0$ and infinity, for some appropriate limit.
For self-avoiding walk, we define the critical exponents $\gs$ and $\es$ by
\begin{eqnarray}
\label{eqDefGammaSAW}
    \chi(z)&\asymp&\left({z_c}-{z}\right)^{-\gs}\qquad\text{as $z\ua z_c$,}\\
\label{eqDefEtaSAW}
    \hat G_{z_c}(k)&\asymp&\frac{1}{|k|^{(\alpha\wedge2)-\es}}\qquad\text{as $k\to0$.}
\end{eqnarray}
For percolation we define the critical exponents $\gp$, $\bp$, $\dep$ and $\ep$ by
\begin{eqnarray}
\label{eqDefGammaPercolation}
    \chi(z)&\asymp&\left(z_c-z\right)^{-\gp}\qquad\text{as $z\ua z_c$,}\\
\label{eqDefBetaPercolation}
    \theta(z)&\asymp&\left(z-z_c\right)^{\bp}\qquad\text{as $z\da z_c$,}\\
\label{eqDefDeltaPercolation}
    \P_{z_c}(|\Ccal|\ge n)&\asymp&\frac{1}{n^{1/\dep}}\qquad\text{as $n\to\infty$,}\\
\label{eqDefEtaPercolation}
    \hat G_{z_c}(k)&\asymp&\frac{1}{|k|^{(\alpha\wedge2)-\ep}}\qquad\text{as $k\to0$.}
\end{eqnarray}
The exponent $\gp$ describes the asymptotic behavior in the subcritical regime $\{z<z_c\}$,
$\bp$ describes the behavior in the supercritical regime $\{z>z_c\}$,
and $\dep$ and $\ep$ describe the behavior at criticality.
For the Ising model, we consider the critical exponents $\gi$, $\bi$, $\di$, $\etai$ defined by
\begin{eqnarray}
\label{eqDefGammaIsing}
    \chi(z)&\asymp&\left(z_c-z\right)^{-\gi}\qquad\text{as $z\ua z_c$,}\\
\label{eqDefBetaIsing}
    M(z,0+)&\asymp&\left(z-z_c\right)^{\bi}\qquad\text{as $z\da z_c$,}\\
\label{eqDefDeltaIsing}
    M(z_c,h)&\asymp&{h^{1/\di}}\qquad\text{as $h\da 0$,}\\
\label{eqDefEtaIsing}
    \hat G_{z_c}(k)&\asymp&\frac{1}{|k|^{(\alpha\wedge2)-\etai}}\qquad\text{as $k\to0$.}
\end{eqnarray}
For a discussion on the construction of $\hat G_{z_c}(k)$ we refer to Section \ref{sectFourier} below.

It is believed that critical exponents are \emph{universal}, i.e., minor modifications of the model, like changes in the underlying graph, leave the general asymptotic behavior, as described by the critical exponents, unchanged.
Their values depend on the dimension $d$. However, it is predicted that there is an \emph{upper critical dimension} $d_c$, such that the critical exponents take the same value for all $d>d_c$.
These values are the \emph{mean-field} values of the critical exponents. For self-avoiding walk these are the values obtained for simple random walk, i.e., $\gs=1$ and $\es=0$, whereas for percolation the mean-field values are $\gp=1$, $\bp=1$, $\dep=2$ and $\ep=0$, which coincide with the corresponding critical exponents obtained for percolation on an infinite regular tree, see \cite[Section 10.1]{Grimm99}.
For the Ising model, these mean-field values are $\gi=1$, $\bi=1/2$, $\di=3$ and $\etai=0$, as obtained for the Curie-Weiss model.

The present paper uses the lace expansion to show that these critical exponents exist and take their mean-field values in sufficiently high dimensions for the nearest-neighbor version of $D$, or $d$ exceeding some critical dimension $d_c$ and $L$ sufficiently large for the spread-out models, respectively.

\subsection{Results}\label{sectResults}
We introduce the (small) quantity $\beta$ by
$\beta=K/d$ for the nearest-neighbor model ($K$ is a uniform constant),
or $\beta=K\,L^{-d}$ for the spread-out models ($K$ is a constant depending on $d$ and $\alpha$).
We make this relation more explicit in Proposition \ref{propBetaSufficient} below.
Be aware that the \emph{critical exponents} $\bp$ and $\bi$ have no relation with the $\beta$ introduced here.

We further introduce the function $\tau\colon z\mapsto \tau(z)$, where $\tau(z)=z$ for self-avoiding walk and percolation, and
    \begin{equation}
    \tau(z)=\sum_{y\in\Zd}\tanh(zJ(y))
    \end{equation}
for the Ising model, cf.\ (\ref{eqDefJ}).

Our main result is the following infrared behavior:
\begin{theorem}[Infrared bound]\label{theoremInfraredBound}
Fix $s=2$ for self-avoiding walk and the Ising model, and $s=3$ for percolation.
Let $d$ sufficiently large in the nearest-neighbor case (at least $d>4s$), or $d>2s$ and $L$ sufficiently large in the finite-variance spread-out case, or $d>(\alpha\wedge2)s$ and $L$ sufficiently large in the spread-out power-law case.
Then
\begin{equation}\label{eqInfraredBound}
    \Gk=\frac{1+O(\beta)}{\chi(z)^{-1}+\tau(z)[1-\Dk]}
\end{equation}
uniformly for $z\in[0,z_c)$ and $k\in\Td$.
\end{theorem}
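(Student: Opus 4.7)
The starting point is the lace expansion identity, which in the unified framework of Section~\ref{sectFramework} can be rearranged into the form
\begin{equation*}
\hat G_z(k)^{-1} = \hat F_z(k) - \tau(z)\,\hat D(k),
\end{equation*}
where $\hat F_z(k)$ is a simple algebraic expression in the expansion coefficient $\hat\Pi_z(k)$ (schematically $\hat F_z(k)=1-\hat\Pi_z(k)$ for self-avoiding walk and $\hat F_z(k)=[1+\hat\Pi_z(k)]^{-1}$ for percolation and the Ising model). Evaluating at $k=0$ gives $\chi(z)^{-1}=\hat F_z(0)-\tau(z)$, and subtracting yields
\begin{equation*}
\hat G_z(k)^{-1}-\chi(z)^{-1}-\tau(z)\bigl[1-\hat D(k)\bigr] = \hat F_z(k)-\hat F_z(0).
\end{equation*}
Thus \eqref{eqInfraredBound} reduces to showing that the right-hand side is bounded by $O(\beta)$ times $\chi(z)^{-1}+\tau(z)[1-\hat D(k)]$, uniformly in $z\in[0,z_c)$ and $k\in\Td$. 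Using the relation between $\hat F_z$ and $\hat\Pi_z$, this is in turn equivalent to the ``displacement'' estimate $|\hat\Pi_z(0)-\hat\Pi_z(k)| = O(\beta)\,[1-\hat D(k)]$ together with the pointwise bound $|\hat\Pi_z(k)|=O(\beta)$.

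To produce these estimates I would run a \emph{bootstrap} in $z$, in the spirit of \cite{BorgsChayeHofstSladeSpenc05b}. One introduces a bounding functional $f(z)$ on $[0,z_c)$ combining three ingredients: (i) a ratio comparing $\chi(z)$ with the corresponding random-walk susceptibility, (ii) a pointwise comparison of $\hat G_z(k)$ with the ``target'' propagator $\hat C_z(k):=[\chi(z)^{-1}+\tau(z)(1-\hat D(k))]^{-1}$, and (iii) a symmetric second-difference bound on $\hat G_z$ capturing its smoothness in $k$. The assumption that $f(z)$ is below a loose threshold feeds into Assumption~\ref{assumptionBeta}, which controls $\hat\Pi_z$ by convolution diagrams built from $\hat G_z$; the random walk bound on those diagrams together with a trigonometric manipulation rewriting $\hat\Pi_z(0)-\hat\Pi_z(k)$ as an integral of $[1-\hat D(\ell)]$ against products of Green's functions produces a strictly tighter bound $f(z)\le 1+O(\beta)$. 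Since $z\mapsto\hat G_z,\hat\Pi_z$ are continuous on $[0,z_c)$ and $f(0)$ is trivially small, the standard forbidden-interval argument propagates the improved bound through the full subcritical interval.

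The main obstacle is producing the right trigonometric manipulation in the spread-out power-law regime, where the finite-variance replacement $1-\cos(k\cdot x)\le \tfrac12(k\cdot x)^2$ is no longer available and instead $1-\hat D(k)\asymp|k|^\alpha$ with $\alpha<2$. One therefore needs a fractional analogue of the BCHSS trigonometric inequality that remains compatible with the diagrammatic estimates of Assumption~\ref{assumptionBeta}; the dimension condition $d>(\alpha\wedge2)s$ is precisely what is required to make the resulting convolutions of $\hat G_z$ integrable with a prefactor of order $\beta$. Once the displacement bound on $\hat\Pi_z$ is in hand, substituting into the identity above and inverting produces \eqref{eqInfraredBound}.
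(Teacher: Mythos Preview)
Your overall architecture is correct and matches the paper's: lace-expansion identity~\eqref{eqDefG}, subtract the $k=0$ equation to isolate $\chi(z)^{-1}$, reduce to pointwise and displacement bounds on the expansion coefficients, and close those bounds by a three-component bootstrap (cf.\ \eqref{eqDefF}--\eqref{eqDefF3} and Lemma~\ref{lemmaImprovedBounds}). Two points deserve correction, one minor and one substantive.

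\textbf{Minor.} The displacement estimate you write as $|\hat\Pi_z(0)-\hat\Pi_z(k)|=O(\beta)[1-\hat D(k)]$ is stronger than what is proved or needed. The paper bounds $\sum_x[1-\cos(k\cdot x)]|\Phi_z(x)|$ by $O(\beta)\,\hat C_{\lambda_z}(k)^{-1}$ with $\hat C_{\lambda_z}(k)^{-1}=1-\lambda_z\hat D(k)=[1-\hat D(k)]+\chi(z)^{-1}\hat D(k)$ (Assumption~\ref{assumptionDiagrammaticBounds}). The $\chi(z)^{-1}$ piece is essential: without it the comparison function in $f_3$ is not continuous in $z$ and the forbidden-interval argument breaks (this is discussed around \eqref{eqDCbound}). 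Your target denominator already contains $\chi(z)^{-1}$, so nothing is lost by using the weaker form.

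\textbf{Substantive.} Your final paragraph misidentifies the obstacle. The trigonometric machinery of \cite{BorgsChayeHofstSladeSpenc05b} (Lemma~\ref{lemmaTrigo} here) never uses $1-\cos(k\cdot x)\le\tfrac12(k\cdot x)^2$; it works directly with factors $[1-\cos(k\cdot x)]$ and bounds them in terms of $\widehat{|a|}(0)-\widehat{|a|}(k)$, which is $\le[1-\hat D(k)]+O(\beta)\hat C_{\lambda_z}(k)^{-1}$ regardless of whether $D$ has finite variance. No fractional analogue is required. The power-law exponent $\alpha$ enters \emph{only} through the verification of the random-walk $s$-condition (Assumption~\ref{assumptionBeta}, Proposition~\ref{propBetaSufficient}), i.e.\ the convergence of $\int\hat D(k)^2[1-\hat D(k)]^{-s}\,\d k$, which is where $d>(\alpha\wedge2)s$ is used. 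Once that integral is $O(\beta)$, the diagrammatic bounds and the bootstrap run verbatim as in the finite-variance case; indeed this uniformity is the point of the paper's formulation via Assumptions~\ref{assumptionBeta} and~\ref{assumptionDiagrammaticBounds}. The passage from the $|x|^2$-weighted bounds in \cite{Sakai07} to the $[1-\cos(k\cdot x)]$-weighted bounds needed here (Appendix~\ref{appendixBoundsIsing}) is a rewriting of the existing diagrammatic estimates, not a new inequality.
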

The infrared bound is well-known in several cases.
Hara and Slade proved the infrared bound for the nearest-neighbor case and the finite-variance spread-out case,
for self-avoiding walk \cite{HaraSlade92b,HaraSlade92a} (see also \cite[Theorem 6.1.6]{MadraSlade93})
as well as for percolation \cite{HaraSlade90a}.
Fr\"ohlich, Simon and Spencer \cite{FrohlSimonSpenc76} proved the upper bound in (\ref{eqInfraredBound}) for the Ising model under the \emph{reflection positivity} assumption, which holds e.g.\ for the nearest-neighbor case.
We discuss reflection positivity in more detail in Section \ref{sectDiscussion}.

By discarding the term $\chi(z)^{-1}$ in (\ref{eqInfraredBound}),
we obtain from Theorem \ref{theoremInfraredBound} that
(under the assumptions formulated there)  
\begin{equation}\label{eqGammaCondition}
    \Gk\le\frac{1+O(\beta)}{\tau(z)[1-\Dk]}
\end{equation}
uniformly for $z<z_c$.

Note that the bound
\begin{equation}\label{eqSingleStepBound}
     G_z(x)-\delta_{0,x}\le\tau(z)\left(D\ast G_z\right)(x).
\end{equation}
holds in all our three models:
for self-avoiding walk this is obvious,
for percolation it follows from the BK-inequality \cite{BergKeste85},
and for the Ising model we use \citeAkira{(4.2)} in the infinite-volume limit.
Thus for $s=2$,
\begin{equation}\label{eqBubbleCondition}
    B(z):=\sum_x G_z(x)^2
    \le 1+\sum_x \tau(z)^2\left(D\ast G_z\right)(x)^2
    \le 1+\tau(z)^2\int_{\Td}\Dk^2\,\Gk^2\dk.
\end{equation}
A combination of (\ref{eqGammaCondition}) and (\ref{eqBubbleCondition}) gives rise to
\begin{equation}\label{eqBubbleCondition2}
    B(z)\le 1+O(1)\int_{\Td}\frac{\Dk^2}{[1-\Dk]^2}\dk
    \le 1+O(\beta),
\end{equation}
where we use that the integrated term is $O(\beta)$ by Assumption \ref{assumptionBeta} and Proposition \ref{propBetaSufficient} below.
A similar calculation gives the corresponding result for $s=3$. More specifically,
\begin{equation}\label{eqTriangleCondition}
    T(z):= \sum_{x,y} G_z(0,x)\,G_z(x,y)\,G_z(y,0)\le1+O(\beta)\qquad\text{when $s=3$.}
\end{equation}
The bounds (\ref{eqBubbleCondition})--(\ref{eqTriangleCondition}) hold uniformly for $z<z_c$ under the assumptions in Theorem \ref{theoremInfraredBound}. Note that in (\ref{eqTriangleCondition}) we write $G_z(x,y)=G_z(x-y)$.
We call $B(z)$ the \emph{bubble diagram} and $T(z)$ the \emph{triangle diagram}.

The two-point function $G_z(x)$ seen as a function of $z$ (for fixed $x$) is continuous.
For self-avoiding walk this fact follows from Abel's Theorem, and for percolation it is a consequence of Aizenman, Kesten and Newman \cite{AizenKesteNewma87}.
A general argument that holds for all our three models is the following:
the quantity $G_z(x)$ 
can be realized as an increasing limit (finite volume approximation) of a function which is continuous and non-decreasing in $z$, hence $G_z(x)$ is left-continuous (cf.\ \cite[Appendix A]{Hara08}).
It follows that (\ref{eqBubbleCondition})--(\ref{eqTriangleCondition}) even hold at criticality, i.e.\ when $z=z_c$.
In  particular, this implies the \emph{bubble condition} (i.e., $B(z_c)<\infty$) or the \emph{triangle condition} (i.e., $T(z_c)<\infty$) for $s=2$ or 3, respectively.
We formulate this fact as a corollary:
\begin{cor}[Bubble/Triangle condition]\label{corolGammaCondition}
Under the assumptions in Theorem \ref{theoremInfraredBound},
$B(z_c)\le1+O(\beta)$ for $s=2$ (self-avoiding walk and Ising model),
and $T(z_c)\le1+O(\beta)$ for $s=3$ (percolation).
\end{cor}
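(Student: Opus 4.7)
The plan is to pass from the strict subcritical bounds (\ref{eqBubbleCondition2}) and (\ref{eqTriangleCondition}), which hold uniformly in $z < z_c$, to the critical value $z = z_c$ by a left-continuity / monotone convergence argument that exploits the pointwise monotonicity of $x \mapsto G_z(x)$ in $z$.

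First I would record the key analytic input, namely that for each fixed $x \in \Zd$ the map $z \mapsto G_z(x)$ is non-decreasing and left-continuous on $[0,z_c]$. For self-avoiding walk, $G_z(x)$ is a power series in $z$ with non-negative coefficients, so monotonicity is immediate and left-continuity at $z_c$ follows from Abel's theorem. For percolation, $G_z(x) = \mathbb{P}_z(0 \leftrightarrow x)$ is increasing in $z$ by a standard coupling, and left-continuity follows by approximating $\{0 \leftrightarrow x\}$ by the events that $0$ is connected to $x$ inside a finite box $\Lambda \ua \Zd$, each of which depends on finitely many bonds and is therefore continuous in $z$. For the Ising model the same strategy applies to the finite-volume correlations in (\ref{eqDefGising}): each finite-volume two-point function is a continuous function of $z$, and by Griffiths' second inequality it is non-decreasing both in $z$ and in $\Lambda$, so the thermodynamic limit $G_z(x)$ is the supremum of continuous non-decreasing functions, hence non-decreasing and left-continuous in $z$.

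Next I would apply monotone convergence to the definitions of the bubble and the triangle. For $s = 2$, since $G_z(x)^2 \ua G_{z_c}(x)^2$ pointwise in $x$ as $z \ua z_c$, the monotone convergence theorem yields
\begin{equation}
B(z_c) = \sum_{x \in \Zd} G_{z_c}(x)^2 = \lim_{z \ua z_c} \sum_{x \in \Zd} G_z(x)^2 = \lim_{z \ua z_c} B(z).
\end{equation}
Combining this with the uniform bound $B(z) \le 1 + O(\beta)$ from (\ref{eqBubbleCondition2}) gives $B(z_c) \le 1 + O(\beta)$, which is the bubble condition. The triangle case is analogous: the product $G_z(0,x)\,G_z(x,y)\,G_z(y,0)$ is non-decreasing in $z$ for every fixed $(x,y) \in \Zd \times \Zd$, so monotone convergence and (\ref{eqTriangleCondition}) give $T(z_c) \le 1 + O(\beta)$.

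I do not expect a serious obstacle; the only subtlety is verifying left-continuity of $z \mapsto G_z(x)$ for the three models in a unified way, and the finite-volume approximation outlined above (and alluded to in the paragraph preceding the corollary) handles all three cases simultaneously. Once that is in hand, the corollary is a one-line application of monotone convergence to the subcritical bounds already established.
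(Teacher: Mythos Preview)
Your proposal is correct and follows essentially the same approach as the paper: the paper establishes left-continuity of $z\mapsto G_z(x)$ via a finite-volume approximation argument (with Abel's theorem for self-avoiding walk as a special case), and then passes the uniform subcritical bounds (\ref{eqBubbleCondition2}) and (\ref{eqTriangleCondition}) to $z=z_c$. Your write-up makes the monotone convergence step explicit, which the paper leaves implicit in the phrase ``It follows that \ldots'', but the argument is the same.
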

The bubble/triangle condition is important since it implies \emph{mean-field} behavior of the model, which is formulated in the next theorem.
In fact, (\ref{eqInfraredBound}) extends to the critical case $z=z_c$ as
\begin{equation}\label{criticalInfraredBound1}
    \hat G_{z_c}(k)=\frac{1+O(\beta)}{1-\Dk},
\end{equation}
and we refer to the discussion around (\ref{criticalInfraredBound}) below for a construction of $\hat G_{z_c}(k)$ and a derivation of (\ref{criticalInfraredBound1}).

We now use Theorem \ref{theoremInfraredBound} to establish the existence of the formerly introduced critical exponents.
\begin{theorem}[Critical exponents]\label{theoremCritExp}
~\\
\vspace{-.5cm}
\begin{enumerate}
\item\label{itemCritExpSaw}
\textbf{Self-avoiding walk.}
Consider the self-avoiding walk model ($s=2$).
Under the assumptions in Theorem \ref{theoremInfraredBound},
the critical exponent $\gs=1$ for the self-avoiding walk exists.
\item\label{itemCritExpPerc}
\textbf{Percolation.}
Consider the percolation model ($s=3$).
Under the assumptions in Theorem \ref{theoremInfraredBound},
the critical exponents $\gp=1$,
$\bp=1$ and
$\dep=2$
for percolation exist.
\item\label{itemCritExpIsing}
\textbf{Ising model.}
Consider the Ising model ($s=2$).
Under the assumptions in Theorem \ref{theoremInfraredBound},
the critical exponents
$\gi=1$, $\bi=1/2$ and $\di=3$
for the Ising model exist.
\item\label{itemCritExpEta}
For all three models, under the assumptions in Theorem \ref{theoremInfraredBound}
and if $1-\Dk\asymp |k|^{\alpha\wedge2}$, then
\begin{equation}\label{eqCritExpEta}
    \hat G_{z_c}(k)\asymp\frac{1}{|k|^{\alpha\wedge2}}
    \qquad\text{as $k\to0$,}
\end{equation}
i.e., the critical exponents $\es=\ep=\etai=0$ exist.
\end{enumerate}
\end{theorem}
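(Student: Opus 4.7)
The overall strategy is to combine Corollary \ref{corolGammaCondition} (which gives the bubble/triangle condition at criticality) with the classical implications ``bubble condition $\Rightarrow$ mean-field exponents'' for self-avoiding walk and Ising, and ``triangle condition $\Rightarrow$ mean-field exponents'' for percolation. Part (iv) is then extracted directly from the critical infrared bound \eqref{criticalInfraredBound1}.

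For parts (i)--(iii) I would simply invoke the existing general results and verify that no step in their proofs depends on the specific range of the interaction. Concretely: for self-avoiding walk, finiteness of $B(z_c)$ implies $\gs = 1$ via the Bovier--Felder--Fr\"ohlich argument (and is also a direct consequence of the lace-expansion identity for $\chi$). For percolation, $T(z_c) < \infty$ yields $\gp = 1$ via Aizenman--Newman, and $\bp = 1$, $\dep = 2$ via the differential inequalities of Barsky--Aizenman and Nguyen. For the Ising model, $B(z_c) < \infty$ yields $\gi = 1$, $\bi = 1/2$, $\di = 3$ via Aizenman--Fern\'andez (and Aizenman--Graham for $\di$). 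The matching mean-field \emph{lower} bounds on $\chi$, $M$ and the tail of $|\Ccal|$ are classical and follow from model-specific inequalities (subadditivity for SAW, Simon/BK and the Aizenman--Barsky inequality for percolation, Griffiths/Lebowitz for Ising), none of which depends on the range of $D$.

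For part (iv), I would use the critical infrared bound \eqref{criticalInfraredBound1}: once $\hat G_{z_c}(k)$ is defined (via the left-continuous monotone limit of $\hat G_z(k)$ as $z \ua z_c$, available because $1 - \hat D(k) \ge c > 0$ for each fixed $k \ne 0$ by (D3)/(D3$'$), so the right-hand side of \eqref{eqInfraredBound} is uniformly bounded in $z < z_c$), the hypothesis $1 - \hat D(k) \asymp |k|^{\alpha \wedge 2}$ combined with $\hat G_{z_c}(k) = (1 + O(\beta))/(1 - \hat D(k))$ immediately gives \eqref{eqCritExpEta} and hence $\es = \ep = \etai = 0$.

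The main obstacle is the citation-and-verification task in parts (i)--(iii): the classical bubble/triangle implications are usually phrased for nearest-neighbor or finite-range models, and one must check that they carry over to the present long-range, infinite-variance setting. Inspection of the proofs shows that they only use translation invariance, symmetry, non-negativity of the coupling, and the finiteness of the bubble or triangle diagram, so they do extend; nevertheless, one must walk through each argument to confirm this. A lesser technical point is ensuring that $\hat G_{z_c}(k)$ is well-defined pointwise for $k \ne 0$ even when $G_{z_c}(\cdot)$ is not summable, which is handled by the monotone-limit construction referred to above and developed in the paper around equation \eqref{criticalInfraredBound1}.
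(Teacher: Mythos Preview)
Your strategy matches the paper's, but there is a real gap in the percolation case. The Aizenman--Newman derivation of $\gp=1$ does \emph{not} use only translation invariance and the triangle bound: it relies on Russo's formula, which requires a finite edge set. For long-range models with infinitely many bonds incident to each site, this fails outright, so ``inspection of the proofs'' is not enough. The paper handles this in Appendix~\ref{appendixCritExp} by a double approximation---working on tori $\torus_r$ with bonds truncated to length $\le R$---proving the differential inequality \eqref{eqPercGamma3} uniformly in $r,R$, and then passing to the limits $r\to\infty$ and $R\to\infty$ via torus/lattice coupling and subcritical exponential decay. A second subtlety you miss is that Barsky--Aizenman establish the \emph{magnetization} exponent $\hatdep$ of \eqref{eqDefM}, not the cluster-tail exponent $\dep$ of \eqref{eqDefDeltaPercolation}; the paper supplies the conversion argument separately in the second half of Appendix~\ref{appendixCritExp}.

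Your construction of $\hat G_{z_c}(k)$ as a ``left-continuous monotone limit'' is also off: $\hat G_z(k)$ is not monotone in $z$ (only $G_z(x)$ is), and for the Ising model left-continuity of the expansion coefficients at $z_c$ is not known. The paper instead uses the diagrammatic bounds and dominated convergence to make $\hat\Psi_{z_c}(k)$ and $\hat\Phi_{z_c}(k)$ well-defined, and then \emph{defines} $\hat G_{z_c}(k)$ through \eqref{eqDefG} at $z=z_c$; see the discussion around \eqref{criticalInfraredBound}. With that definition, part~(iv) is indeed immediate from \eqref{criticalInfraredBound1}, as you say.
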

The derivation of the critical exponents from the bubble-/triangle condition (Corollary \ref{corolGammaCondition}) is well-known in the literature.
However, the mode of convergence required for the existence of the critical exponents varies, and some derivations are stated only for finite range models.
We therefore add a more detailed discussion of the literature here.

For self-avoiding walk, the existence (and the value) of the critical exponent $\gs$ is based on the inequality
\begin{equation}\label{eqSawDiffInequality}
    \frac{z_c}{z_c-z}\le\chi(z)\le B(z_c)\left(\frac{z_c}{z_c-z}+1\right).
\end{equation}
Thus the bubble condition (\ref{eqBubbleCondition}) is sufficient to prove that $\gs$ exists and that $\gs=1$. The inequality (\ref{eqSawDiffInequality}) is derived from a differential inequality in \cite[Theorem 2.3]{Slade06},
which was proved there for uniform spread-out models.
The derivation still holds for infinite-range spread-out models due to the multiplicative structure of the \emph{weights} of the self-avoiding walks in (\ref{eqDefCn}).
A version of (\ref{eqSawDiffInequality}) appeared earlier in \cite[(5.30)--(5.33)]{BovieFeldeFroeh84}.

The derivation of the exponents $\gp=1$, $\bp=1$ and $\dep=2$ from the triangle condition is due to Aizenman--Newman \cite{AizenNewma84} and Barsky--Aizenman \cite{BarskAizen91}.
To apply these results in our settings, there are some subtle issues to be resolved, and we discuss these in more detail in Appendix \ref{appendixCritExp}.

For the Ising model, it has been proven by Aizenman \cite[Proposition 7.1]{Aizen82} that the bubble condition implies $\gi=1$ as long as $|J|=\sum_x J(x)<\infty$ (which is equivalent to $\sum_xD(x)<\infty$).
Under the same condition, Aizenman and Fern\'andez \cite{AizenFerna86} proved the existence and mean-field values of the critical exponents $\bi$ and $\di$.

The statement in \emph{(iv)} is an immediate consequence of (\ref{criticalInfraredBound1}).
The lower bound in $1-\Dk\asymp|k|^{\alpha\wedge2}$ follows from (D3)/(D3').
The upper bound indeed holds for a number of examples,
and in particular if $D$ is chosen as in the nearest-neighbor model (\ref{eqDefNNmodel}),
the finite-variance spread-out model (\ref{eqDefUSOmodel})
or the spread-out power-law model (\ref{eqDefDPowerLaw}) with $\alpha\neq2$, cf. \cite{ChenSakai05, HofstSlade02}.
However, if $D$ is chosen as in (\ref{eqDefDPowerLaw}) with $\alpha=2$, then
$1-\Dk\asymp (L|k|)^2\log(\pi/(L|k|)), $
cf.\ \cite[Prop.\ 1.1]{ChenSakai05}.

The proof of Theorem \ref{theoremInfraredBound}, as well as the proof of Corollary \ref{corolGammaCondition}, is given at the end of Section \ref{sectFramework}.

\subsection{Discussion and related work}
\label{sectDiscussion}
There is numerous work on the application of the lace expansion, see the lecture notes by Slade \cite{Slade06} and references therein.
We give more references below at places where we use lace expansion methodology and need particular results.
We now briefly summarize the results known for \emph{long-range} systems.

Long-range \textbf{self-avoiding walk} has rarely been studied.
Klein and Yang \cite{YangKlein88} showed that weakly self-avoiding walk in dimension $d\ge3$ jumping $m$ lattice sites \emph{along the coordinate axes} with probability proportional to $1/m^2$ converges to a Cauchy process
(as for ordinary random walk with such step distribution).
A similar result for strictly self-avoiding walk has been obtained by Cheng \cite{Cheng00}.

For \textbf{percolation}, Hara and Slade \cite{HaraSlade90a} proved the infrared bound for the finite-variance spread-out case when $D$ has exponential tails.
The study of long-range percolation with power law spread-out bonds started in the 1980's by considering the one-dimensional case
\cite{AizenNewma86, NewmaSchul86, Schul83}.
These authors study the case where occupation probabilities are given by (\ref{eqDefDPowerLaw}) with $\alpha\in(0,1]$
and prove criteria for the existence of an infinite cluster.
For example, Aizenman--Newman \cite{AizenNewma86} show that if $D(x)\,|x|^{2}\to 1$ as $|x|\to\infty$ in one dimension, and $D(1)$ is sufficiently large,
then there exists a critical infinite cluster and hence the percolation probability $z\mapsto\theta(z)$ is \emph{discontinuous} at $z_c$.
This is compatible with our results, which imply that there is no infinite cluster at criticality  for $d>3\alpha$ (and here $\alpha=1$).
Berger \cite{Berge02} uses a renormalization argument to show that in dimension $d=1,2$ the infinite cluster (if it exists) is transient if $0<\alpha<d$ and recurrent if $\alpha\ge d$.
He further concludes that in the $d$-dimensional case ($d\ge1$) there is no infinite cluster at criticality if $0<\alpha<d$.
The question whether there exists an infinite critical cluster for $d\ge2$ and $\alpha\ge d$ \cite[Question 6.4]{Berge02} is answered negatively by the present paper for $d>6$ and $L$ sufficiently large.

In a recent paper, Chen and Sakai \cite{ChenSakai05} study \textbf{oriented percolation} in the spread-out power-law case.
Using similar methods, they prove that the two-point function in oriented percolation obeys an infrared bound if $d>2(\alpha\wedge2)$, which implies mean-field behavior of the model.

A long-range \textbf{Ising model} in one dimension has been studied by Aizenman, Chayes, Chayes, and Newman \cite{AizenChayeChayeNewma88}.
Similar to the percolation result in \cite{AizenNewma86}, they prove that in the one-dimensional case where $D(x)\,|x|^2\to1$ as $|x|\to\infty$, the spontaneous magnetization $M(z,0+)$ has a discontinuity at the critical point $z_c$.

The infrared bound for the Ising model was proved in \cite{FrohlSimonSpenc76} for $d>\alpha\wedge2$
for a class of models obeying the \emph{reflection positivity} (RP) property.
The class of models satisfying (RP) includes the nearest-neighbor model (where $D(x)=(2d)^{-1}\1_{\{|x|=1\}}$),
exponential decaying potentials (where $D(x)\propto\exp\{-\mu\|x\|_1\}$ for $\mu>0$), power-law decaying interactions (where $D(x)\propto|x|^{-s}$ for $s>0$), and combinations thereof.
For a definition of (RP) and a discussion of the above mentioned models, we refer to \cite{BiskuChayeCrawf06}.
Nevertheless, (RP) fails in most cases for small perturbations of these models, although it is believed that the asymptotics still hold.
Moreover, (RP) only implies the upper bound in (\ref{eqInfraredBound}), in that implying that the critical exponent $\eta$ (when it exists) is nonnegative.
Our approach using the lace expansion does \emph{not} require reflection positivity,
it is much more universal in the choice of $D$ (cf. Section \ref{sectPropD}),
and also gives a matching lower bound in (\ref{eqInfraredBound}), yielding $\eta=0$.
On the other hand, our approach requires that the dimension $d$ or the spread-out parameter $L$ are sufficiently large, a limitation that one may not expect to reflect the physics.
The literature for the long-range Ising model in higher dimensions based on (RP) arguments is summarized by Aizenman and Fern\'andez \cite{AizenFerna88}, who also identify $2(\alpha\wedge2)$ as upper critical dimension.\footnote{There is a typo in \cite{AizenFerna88}, the value of $\delta$ in \cite[(1.2)]{AizenFerna88} should be 3.}

Given (\ref{eqCritExpEta}) it is folklore that
\begin{equation}\label{eqXspaceAsymptotics}
    G_{z_c}(x)\asymp|x|^{-d+(\alpha\wedge2)}
\end{equation}
holds in the general setting considered here.
Partial results towards (\ref{eqXspaceAsymptotics}) have been obtained.
Indeed, Hara, van der Hofstad and Slade \cite{HaraHofstSlade03} proved (\ref{eqXspaceAsymptotics}) in the
finite-range spread-out setting for self-avoiding walk and percolation, Hara \cite{Hara08} proved it in the nearest-neighbor setting, and Sakai \cite{Sakai07} proved it for the Ising model in finite-range spread-out and nearest-neighbor settings.
We discuss the critical two-point function $G_{z_c}(x)$ at the end of Sect.\ \ref{sectFourier}.

\section{A general framework}\label{sectFramework}
In order to study the various models in a unified way, we use this section to set up a generalized framework. We make two assumptions in terms of the general framework, and use the subsequent two sections to show that our models actually satisfy these assumptions.
We then prove the results within the abstract setting, based on the two assumptions made.
\subsection{An expansion of the two-point function}\label{sectFourier}
Given a step distribution $D$, we consider the \emph{random walk two-point function} or \emph{Green's function} of the random walk defined by
\begin{equation}\label{eqDefCz}
C_z(x)=\sum_{n=0}^\infty D^{\ast n}(x)\,z^n,
\end{equation}
where ${D}^{\ast n}$ is the $n$-fold convolution of $D$ and
$D^{\ast 0}(x)\,z^0=\delta_{x,0}$.
We write $\delta$ for the Kronecker delta function.
By conditioning on the first step we obtain
\begin{equation}\label{eqCz2}
    C_z(x)=\delta_{0,x}+z\left(D\ast C_z\right)(x).
\end{equation}
Taking the Fourier transform and solving for $\hat C_z(k)$ yields
\begin{equation}\label{eqCz3}
    \hat C_z(k)=\frac{1}{1-z\Dk}, \qquad z<1.
\end{equation}

Next we consider $G_z(x)$ defined in (\ref{eqDefGsaw})--(\ref{eqDefGising}).
For each of the three models, i.e., for self-avoiding walk, percolation and the Ising model, we use the \emph{lace expansion} to obtain an expansion formula of the form
\begin{equation}\label{eqDefGx}
    G_z(x)=\delta_{0,x}+\tau(z)\left(D\ast G_z\right)(x)+\left(G_z\ast\Phi_z\right)(x)+\Psi_z(x).
\end{equation}
The coefficients $\Phi_z(x)$ and $\Psi_z(x)$ depend on the model, but above their respective upper critical dimension they obey similar bounds.
Assuming the existence of $\PhiK$ and $\PsiK$, Fourier transformation yields
\begin{equation}\label{eqDefG}
    \Gk=\frac{1+\PsiK}{1-\tau(z)\Dk-\PhiK}, \qquad z<z_c.
\end{equation}
The full derivation of the lace expansion will not be carried out in this paper.
We discuss the lace expansion briefly in Section \ref{sectLaceExpansion}, where we also define the lace expansion coefficients  $\Phi_z$ and $\Psi_z$, and cite bounds on them from \cite{BorgsChayeHofstSladeSpenc05b,Sakai07,Slade06}.
We will see that, for $z=0$, $\hat\Psi_0(k)\equiv0$ and $\hat\Phi_0(k)\equiv0$ for all models.
We recall that $\tau(z)=z$ for self-avoiding walk and percolation, and $\tau(z)=\sum_{y\in\Zd}\tanh(zJ(y))$ for the Ising model, see Sect.\ \ref{sectResults}.

For the critical case (i.e., $z=z_c$) we have
\begin{equation}\label{eqTauCrit}
    1\le\tau(z_c)\le1+O(\beta),
\end{equation}
where the lower bound is a consequence of (\ref{eqSingleStepBound}), and the upper bound emerges from (\ref{eqDefF1}) and (\ref{eqBoundOnF}) below.
The function $G_{z_c}(x)=\lim_{z\nearrow z_c}G_z(x)$ is not in $\ell^1(\Zd)$, hence the Fourier transform does not exist.
However, diagrammatic bounds of the lace expansion coefficients (Prop.\ \ref{propBoundPhiPsi}) and the dominated convergence theorem guarantee the absolute convergence of the various sums involved defining $\PsiK$ and $\PhiK$, which shows that the critical quantities
$\hat\Psi_{z_c}(k)$ and $\hat\Phi_{z_c}(k)$
are well-defined.
This justifies the introduction of $\hat G_{z_c}(k)$ as a solution to (\ref{eqDefG}) with $z=z_c$.
Note that we do not assume any continuity of $z\mapsto\PsiK$ and $z\mapsto\PhiK$ to do this.
Nevertheless, we can extend (\ref{eqInfraredBound}) to the critical case $z=z_c$, and further use (\ref{eqTauCrit}) to obtain
\begin{equation}\label{criticalInfraredBound}
    \hat G_{z_c}(k)=\frac{1+O(\beta)}{1-\Dk}.
\end{equation}
An issue of interest is the (left-) continuity of $\Gk$ at $z=z_c$.
In particular, the identity
\begin{equation}\label{eqGzc2}
    G_{z_c}(x)=\int_{\Td}\e^{-ik\cdot x}\hat G_{z_c}(k)\dk,\qquad x\in\Zd,
\end{equation}
would follow from the the fact that $\PsiK$ and $\PhiK$ are left-continuous at $z=z_c$,
as explained by Hara \cite[Appendix A]{Hara08}.
The left-continuity of $\PsiK$ and $\PhiK$ at $z=z_c$ indeed holds for self-avoiding walk (by Abel's Theorem) and for percolation (by \cite[Lemma A.1]{Hara08}), but a proof for the Ising model is not known.

\subsection{The random walk condition}
Recall that the \emph{model parameter} $s$ is $2$ for self-avoiding walk or Ising model, and $3$ for percolation.
We now make an assumption on the step distribution $D$.

\begin{ass}[Random walk $s$-condition]\label{assumptionBeta}
There exists $\beta>0$ sufficiently small such that
\begin{equation}\label{eqBeta0}
    \sup_{x\in\Zd} D(x)\le\beta
\end{equation}
and
\begin{equation}\label{eqBeta}
    \int_{\Td}\frac{\hat{D}(k)^2}{[1-\hat{D}(k)]^s}\,\dk\le\beta.
\end{equation}
\end{ass}
\noindent
\textit{Remark:} The specific amount of smallness required in (\ref{eqBeta0})--(\ref{eqBeta}) will be specified in the proofs in Section \ref{sectAnalysisLE}.

For $s=2$ we call (\ref{eqBeta}) the random walk bubble condition.
This is inspired by the fact that its $x$-space analogue reads
\begin{equation}\label{eqRWbubble}
(D\ast C_1\ast D\ast C_1)(0)\le\beta.
\end{equation}
In other words, we have an (ordinary) random walk from $0$ to $x$ of at least one step, and a second walk from $x$ to $0$ and subsequently sum over all $x$.
Correspondingly, for $s=3$, we obtain the $x$-space representation
\begin{equation}\label{eqRWtriangle}
    (C_1\ast D\ast C_1\ast D\ast C_1)(0)\le\beta,
\end{equation}
and refer to (\ref{eqBeta}) as the random walk triangle condition.
See the graphical representation in Figure \ref{figureBubble}.

\begin{figure}[ht]
\centering
  \includegraphics[width=8cm]{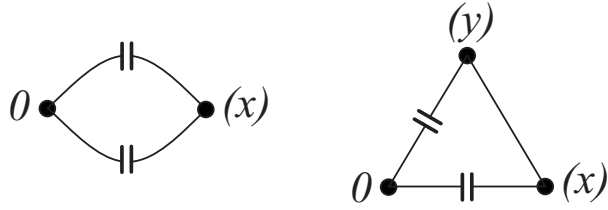}\\
  \caption{Graphical representation of the random walk bubble diagram in (\ref{eqRWbubble}) and the random walk triangle diagram in (\ref{eqRWtriangle}).
  A line between two points, say $x$ and $y$, represents the two-point function $C_1(y-x)$,
  a line with a double dash in the middle requires at least one step, e.g.\ a line between $0$ and $x$ represents $(D\ast C_1)(x)$.
  Vertices labeled in brackets are summed over $\Zd$.}
  \label{figureBubble}
\end{figure}

\begin{prop}\label{propBetaSufficient}
Assumption \ref{assumptionBeta} is satisfied for arbitrarily small $\beta$ if $d$ is chosen sufficiently large in the nearest-neighbor model (at least $d>4s$) or $d>d_c=s(\alpha\wedge2)$ and $L$ is sufficiently large in the spread-out models.
More specifically, the assumption holds with $\beta=O(d^{-1})$ in the nearest-neighbor case, and $\beta=O(L^{-d})$ in the spread-out cases.
\end{prop}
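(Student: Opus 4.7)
The plan is to verify the two parts of Assumption \ref{assumptionBeta} separately in each of the three cases.

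\textbf{The pointwise bound (\ref{eqBeta0}).} This is immediate: in the nearest-neighbor case, (\ref{eqDefNNmodel}) gives $\|D\|_\infty = 1/(2d) = O(d^{-1})$, while in both spread-out cases, (D2) gives $\|D_L\|_\infty \le C L^{-d}$.

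\textbf{The integral bound (\ref{eqBeta}), spread-out cases.} I would decompose $\Td = A_1 \cup A_2$ with $A_1 = \{k\colon \|k\|_\infty \le L^{-1}\}$ and treat each piece separately. On $A_1$, the lower bound (D3) or (D3$'$) gives $1 - \hat{D}(k) \ge c_1 L^{\alpha\wedge 2}|k|^{\alpha\wedge 2}$; bounding $\hat{D}(k)^2 \le 1$ and substituting $u = Lk$ yields
\begin{equation*}
\int_{A_1}\frac{\hat{D}(k)^2}{(1-\hat{D}(k))^s}\dk \le c_1^{-s}L^{-d}\int_{\|u\|_\infty \le 1}\frac{\d u}{(2\pi)^d\,|u|^{s(\alpha\wedge 2)}},
\end{equation*}
and the $u$-integral is finite precisely under the critical-dimension hypothesis $d > s(\alpha\wedge 2) = d_c$. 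On $A_2$, the upper bounds in (D3)/(D3$'$) give $1 - \hat{D}(k) > c_2$, so that by Parseval
\begin{equation*}
\int_{A_2}\frac{\hat{D}(k)^2}{(1-\hat{D}(k))^s}\dk \le c_2^{-s}\sum_{x\in\Zd}D(x)^2 \le c_2^{-s}\|D\|_\infty \le C'L^{-d},
\end{equation*}
using (D2) in the last step. Summing the two contributions gives (\ref{eqBeta}) with $\beta = O(L^{-d})$.

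\textbf{The integral bound (\ref{eqBeta}), nearest-neighbor case.} This requires more care. I would apply Cauchy--Schwarz,
\begin{equation*}
\int_\Td \frac{\hat{D}(k)^2}{(1-\hat{D}(k))^s}\dk \le \Bigl(\int_\Td \hat{D}(k)^4\dk\Bigr)^{1/2}\Bigl(\int_\Td \frac{\dk}{(1-\hat{D}(k))^{2s}}\Bigr)^{1/2}.
\end{equation*}
By Parseval, the first integral equals $\Prob(S_4 = 0)$ for simple random walk on $\Zd$; a short combinatorial enumeration of the $12d^2-6d$ closed four-step walks yields $\Prob(S_4 = 0) = (12d^2-6d)/(2d)^4 = O(d^{-2})$, so the first factor is $O(d^{-1})$. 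For the second factor, the local behavior $1-\hat{D}(k) \asymp |k|^2/d$ near the origin, combined with the rescaling $k = u/\sqrt d$, renders the near-origin contribution integrable exactly under the stated hypothesis $d > 4s$; away from $k = 0$ the denominator is bounded below by a positive constant (and near the antipodal point $(\pi,\dots,\pi)$, where $\hat{D} = -1$, the integrand itself is bounded). A careful bookkeeping exploits the super-exponential decay of the surface measure of spheres in $\R^d$ to absorb the polynomial $d$-factors arising from the rescaling, and yields that the second factor is bounded uniformly in $d$. Combining the two factors gives $\beta = O(d^{-1})$.

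The principal difficulty lies in the nearest-neighbor case: both Cauchy--Schwarz factors must be controlled \emph{uniformly in $d$}, and the threshold $d > 4s$ originates precisely from the near-origin singularity of $(1-\hat{D})^{-2s}$. The spread-out analysis, by contrast, is essentially routine because the spread-out parameter $L$ provides a clean separation of the near- and far-from-origin regimes in $\Td$.
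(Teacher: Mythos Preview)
Your spread-out argument is essentially identical to the paper's, and your nearest-neighbor strategy (Cauchy--Schwarz, bounding $\int\hat D^4$ via the four-step return probability) also matches. The gap is in your treatment of the second Cauchy--Schwarz factor
\[
I_d:=\int_\Td\frac{1}{[1-\hat D(k)]^{2s}}\,\dk.
\]
Your claim that ``away from $k=0$ the denominator is bounded below by a positive constant'' is false as a $d$-uniform statement: at $k=(\pi,0,\dots,0)$ one has $\hat D(k)=(d-2)/d$, so $1-\hat D(k)=2/d$. The only global lower bound available is $1-\hat D(k)\ge 2|k|^2/(\pi^2 d)$, valid on all of $\Td$, and the factor $d^{2s}$ this introduces is precisely what must be controlled uniformly. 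Your appeal to ``super-exponential decay of the surface measure of spheres'' after the rescaling $k=u/\sqrt d$ is not a proof: the domain grows to $[-\pi\sqrt d,\pi\sqrt d)^d$, and several exponentially large and small factors compete.

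The paper's device avoids all of this. Using $1-\hat D(k)\ge 2|k|^2/(\pi^2 d)$ on the whole torus and the identity $A^{-m}=\Gamma(m)^{-1}\int_0^\infty t^{m-1}e^{-tA}\,dt$ with $A=|k|^2/d$, $m=2s$, one obtains (via Fubini and the product structure of $e^{-t|k|^2/d}$ over coordinates)
\[
I_d\le\frac{\pi^{4s}}{2^{2s}\,\Gamma(2s)}\int_0^\infty t^{2s-1}\Bigl(\int_{-\pi}^\pi\bigl(e^{-t\theta^2}\bigr)^{1/d}\,\frac{d\theta}{2\pi}\Bigr)^{d}dt.
\]
The bracketed quantity is $\|e^{-t\theta^2}\|_{L^{1/d}}$ on the probability space $([-\pi,\pi],\frac{d\theta}{2\pi})$, which is non-increasing in $d$ by the monotonicity of $L^p$ norms on a probability space. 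Hence $I_d$ is non-increasing in $d$ and is bounded by its (finite, since $d>4s$) value at the smallest admissible dimension. This yields the uniform bound without any asymptotic bookkeeping.
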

We prove Proposition \ref{propBetaSufficient} in Section \ref{sectBetaCondition}.
We shall prove the following generalized version of Theorem \ref{theoremInfraredBound}.
By Proposition \ref{propBetaSufficient}, Theorem \ref{theoremInfraredBound-GeneralForm} below immediately implies Theorem \ref{theoremInfraredBound}.
\begin{theorem}\label{theoremInfraredBound-GeneralForm}
Fix $s=2$ for self-avoiding walk and the Ising model, and $s=3$ for percolation.
If Assumption \ref{assumptionBeta} is satisfied for $\beta$ sufficiently small, then (\ref{eqInfraredBound})
holds uniformly for $z\in[0,z_c)$ and $k\in\Td$.
\end{theorem}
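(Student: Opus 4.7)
The plan is to apply the forbidden-region (bootstrap) method to the lace-expansion identity (\ref{eqDefG}), adapting the trigonometric scheme of \cite{BorgsChayeHofstSladeSpenc05b} to the generalized setting encompassing all three models.

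\textbf{Reduction.} Starting from $\hat G_z(k)[1 - \tau(z)\hat D(k) - \hat\Phi_z(k)] = 1 + \hat\Psi_z(k)$, evaluating at $k=0$ yields the identity $\chi(z)^{-1}(1+\hat\Psi_z(0)) = 1 - \tau(z) - \hat\Phi_z(0)$. Substituting this back algebraically decomposes the denominator in (\ref{eqDefG}) as
\[1 - \tau(z)\hat D(k) - \hat\Phi_z(k) = \chi(z)^{-1}(1 + \hat\Psi_z(0)) + \tau(z)[1 - \hat D(k)] + [\hat\Phi_z(0) - \hat\Phi_z(k)].\]
Consequently the target (\ref{eqInfraredBound}) reduces to three uniform estimates for $z \in [0, z_c)$: namely $|\hat\Psi_z(k)| = O(\beta)$, $|\hat\Psi_z(0)| = O(\beta)$, and most importantly the \emph{displacement bound} $|\hat\Phi_z(0) - \hat\Phi_z(k)| \le C\beta\,\tau(z)[1 - \hat D(k)]$, so that the third summand can be absorbed into the second as a $(1+O(\beta))$ multiplicative factor.

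\textbf{Bootstrap.} The diagrammatic bounds on $\hat\Phi_z$ and $\hat\Psi_z$ imported from \cite{BorgsChayeHofstSladeSpenc05b, Sakai07, Slade06} estimate these coefficients by convolutions of $G_z$-values, and therefore need an a priori bound on $\hat G_z$ of the very form we want to establish. I would set up a bootstrap function
\[f(z) = \max\{f_1(z), f_2(z), f_3(z)\}, \qquad z \in [0, z_c),\]
in which $f_1$ monitors $\tau(z)$, $f_2 = \sup_k |\hat G_z(k)|\,\tau(z)[1 - \hat D(k)]$ encodes the infrared bound itself, and $f_3$ controls a second-difference quantity of the shape $\sup_{k,\ell} |\hat G_z(k+\ell) - 2\hat G_z(k) + \hat G_z(k-\ell)|/(\text{trigonometric denominator})$, as used in \cite{BorgsChayeHofstSladeSpenc05b}, needed to dominate the displacement factors. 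Then I would verify: (a) $f(0)$ is small (since $G_0(x) = \delta_{0,x}$); (b) $f$ is continuous on $[0, z_c)$, invoking Abel's theorem for self-avoiding walk, the left-continuity of $G_z$ for percolation via \cite[Lemma A.1]{Hara08}, and Griffiths' inequalities with finite-volume approximation for the Ising model, as indicated around (\ref{eqGzc2}); and (c) the key improvement step: if $f(z) \le K$ for a fixed threshold $K > 1$, then the diagrammatic bounds combined with Assumption \ref{assumptionBeta} force $f(z) \le 1 + O(\beta)$. For $\beta$ small enough one has $1 + O(\beta) < K$, so by continuity and connectedness of $[0, z_c)$ the function $f$ cannot enter the forbidden interval $(1 + O(\beta), K]$, whence $f(z) \le 1 + O(\beta)$ throughout $[0, z_c)$, delivering the estimates identified in the reduction.

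\textbf{Main obstacle.} The crux is step (c), specifically the displacement bound on $\hat\Phi_z(0) - \hat\Phi_z(k) = \sum_x \Phi_z(x)(1 - \cos(k \cdot x))$. The trigonometric idea is to distribute the weight $1 - \cos(k \cdot x)$ over the internal segments of each diagram contributing to $\Phi_z(x)$, by means of the inequality $1 - \cos(k \cdot (u_1 + \cdots + u_n)) \le n \sum_{j=1}^n [1 - \cos(k \cdot u_j)]$. This produces weighted diagrams that are estimated in Fourier space by the bootstrap function $f_3$, and the remaining integrals collapse onto $\int \hat D(k)^2 / [1 - \hat D(k)]^s \, \mathrm{d}k \le \beta$ from Assumption \ref{assumptionBeta}. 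Converting trigonometric displacement factors into $1 - \hat D(k)$ relies on the comparison $1 - \hat D(k) \ge c\,[1 - \cos(k \cdot e)]$ for some nearest-neighbor unit vector $e$, which is available because the support of $D$ is assumed to contain the nearest neighbors of the origin (cf.\ the remark after (\ref{eqDefZc})). The triangle case $s = 3$ (percolation) is technically the heaviest, owing to the extra connectivity line in the diagrams and the nonlinear dependence of percolation lace-expansion diagrams on $\hat G_z$, which makes the bootstrap improvement tighter and the estimates more delicate than in the bubble case $s=2$.
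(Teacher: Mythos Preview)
Your overall architecture---bootstrap on $f=f_1\vee f_2\vee f_3$, algebraic reduction of the denominator, trigonometric distribution of $1-\cos(k\cdot x)$ over diagram segments---is exactly the paper's. Two details, however, are genuine gaps.

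First, your $f_2=\sup_k|\hat G_z(k)|\,\tau(z)[1-\hat D(k)]$ and your (unspecified) $f_3$-denominator use $1-\hat D(k)$ rather than $\hat C_{\lambda_z}(k)^{-1}=1-\lambda_z\hat D(k)$ with $\lambda_z=1-\chi(z)^{-1}$. The paper inserts $\lambda_z$ deliberately: with it, $f_2$ and $f_3$ are ratios of finite quantities (equal to $1$ at $k=0$), and one can bound $\tfrac{d}{dz}[\hat G_z(k)/\hat C_{\lambda_z}(k)]$ uniformly in $k$ on $[0,z_c-\varepsilon]$ via the mean-field bound $\chi'(z)\le c\,\chi(z)^2$ (Lemma~\ref{lemmaContinuity}). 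Without $\lambda_z$ the denominator of $f_3$ vanishes at $k=0$ and the equicontinuity argument breaks; the paper flags precisely this point when adapting \cite{BorgsChayeHofstSladeSpenc05b} (see the remarks after Proposition~\ref{propBoundPerc}). Relatedly, your continuity argument cites Abel's theorem, \cite[Lemma~A.1]{Hara08}, and Griffiths' inequalities---but those establish left-continuity of $G_z(x)$ \emph{at} $z_c$ (used to extend the bubble/triangle bound to $z=z_c$), not continuity of a supremum over $k$ on $[0,z_c)$. The paper instead uses equicontinuity via the mean-field derivative bound (Lemmas~\ref{lemmaEquicontinuity}--\ref{lemmaContinuity}).

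Second, the comparison ``$1-\hat D(k)\ge c[1-\cos(k\cdot e)]$'' is not how the displacement bound is extracted. The factor $[1-\cos(k\cdot x)]$ multiplying a two-point-function line is handled by Fourier inversion together with the $f_3$-hypothesis, which directly yields a factor $\hat C_{\lambda_z}(k)^{-1}$ (see Lemma~\ref{lemmaCosKxGBound}); one then uses the easy inequality $1-\hat D(k)\le 2\,\hat C_{\lambda_z}(k)^{-1}$ of \eqref{eqDCbound}, not its reverse.
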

We remark that Theorem \ref{theoremCritExp} generalizes in the same way.

\subsection{Diagrammatic bounds}
\label{sectAssumptionBounds}
We introduce the quantity
\begin{equation}\label{eqDefLambdaZ}
\lambda_z:=1-\frac{1}{\GN}=1-\frac{1}{\chi(z)}\in[0,1].
\end{equation}
Then $\lambda_z$ satisfies the equality
\begin{equation}
    \GN=\ClN.
\end{equation}
The idea of the proof of Theorem \ref{theoremInfraredBound-GeneralForm} is motivated by the intuition that $\Gk$ and $\ClK$ are comparable in size and, moreover, the discretized second derivative
\begin{equation}
    \Delta_k\hat G_z(l):=\hat{G}_z(l-k)+\hat{G}_z(l+k)-2\hat G(l)
\end{equation}
is bounded by
\begin{equation}\label{eqDefU}
    \Ul:=200\,\ClK^{-1}\left\{ \hat{C}_{\lambda_z}(l-k)\hat{C}_{\lambda_z}(l)+\hat{C}_{\lambda_z}(l)\hat{C}_{\lambda_z}(l+k)+\hat{C}_{\lambda_z}(l-k)\hat{C}_{\lambda_z}(l+k)\right\}.
\end{equation}
More precisely, we will show that the function $f\colon[0,z_c)\to\R$, defined by
\begin{equation}\label{eqDefF}
f:=f_1\vee f_2\vee f_3
\end{equation}
with
\begin{equation}\label{eqDefF1}
f_1(z):=\tau(z),
\qquad f_2(z):=\sup_{k\in\Td}\frac{\Gk}{\ClK},
\end{equation}
and
\begin{equation}\label{eqDefF3}
f_3(z):=\sup_{k,l\in\Td}\frac{|\Delta_k\hat G_z(l)|}{\Ul},
\end{equation}
is small, given that $\beta$ in Assumption \ref{assumptionBeta} is sufficiently small.
To make this rigorous, we need the following assumption:

\begin{ass}[Bounds on the lace expansion coefficients]\label{assumptionDiagrammaticBounds}
    If, for some $K>0$, the inequality $f(z)\le K$ holds uniformly for $z\in(0,z_c)$,
    then there exists a constant $c_K>0$ such that, for all $k\in\Td$,
    \begin{equation}\label{eqBoundPhiK}
    \left|\PsiK\right|\le c_K\beta,\qquad\left|\PhiK\right|\le c_K\beta
    \end{equation}
    and
    \begin{equation}\label{eqBoundPhi}
    \sum_x[1-\cos(k\cdot x)]\left|\Psi_z(x)\right|\le c_K\beta\,\ClK^{-1}, \qquad
    \sum_x[1-\cos(k\cdot x)]\left|\Phi_z(x)\right|\le \tau(z)\,c_K\beta\,\ClK^{-1}
    \end{equation}
    where $\Phi_z$ and $\Psi_z$ refer to the model-dependent coefficients in the expansion formula (\ref{eqDefGx}).
\end{ass}

The key to our results is that the bounds (\ref{eqBoundPhiK})--(\ref{eqBoundPhi}) imply Theorem \ref{theoremInfraredBound-GeneralForm} (and hence Theorem \ref{theoremInfraredBound}):

\proof[Proof of Theorem \ref{theoremInfraredBound-GeneralForm} subject to (\ref{eqBoundPhiK})--(\ref{eqBoundPhi})]
Let
\begin{equation}\label{eqLowerCritBound}
    m_z=1-\tau(z)-\PhiN.
\end{equation}
Then,
\begin{equation}\label{eqMainProof5}
    \Gk=\frac{1+\PsiK}{1-\tau(z)\Dk-\PhiK}
    =\frac{1+\PsiK}{m_z+\tau(z)[1-\Dk]+[\PhiN-\PhiK]}.
\end{equation}
By the first inequality in (\ref{eqBoundPhiK}) and the second in (\ref{eqBoundPhi}) in Assumption \ref{assumptionDiagrammaticBounds},
\begin{equation}\label{eqMainProof8}
    \Gk=\frac{1+O(\beta)}{m_z+\tau(z)\,[1-\Dk]+\tau(z)\,O(\beta)\,\Ci}.
\end{equation}
Evaluating (\ref{eqMainProof5}) for  $k=0$ yields
\begin{equation}\label{eqMainProof6}
    \chi(z)=\G_z(0)=\frac{1+\PsiN}{m_z},
\end{equation}
and the first inequality in (\ref{eqBoundPhiK}) implies
\begin{equation}\label{eqMainProof7}
    m_z=(1+O(\beta))\,\chi(z)^{-1}.
\end{equation}
Furthermore, by (\ref{eqCz3}) and (\ref{eqDefLambdaZ}),
\begin{equation}\label{eqMainProof9}
    \Ci=1-\lambda_z\Dk=1-\Dk+\chi(z)^{-1}\Dk.
\end{equation}
A combination of (\ref{eqMainProof8}), (\ref{eqMainProof7}), (\ref{eqMainProof9}) and the bounds $|\Dk|\le1$, $\tau(z)\le O(1)$ leads to
\begin{equation}\label{eqMainProof10}
    \Gk=\frac{1+O(\beta)}{(1+O(\beta))\,\chi(z)^{-1}+\tau(z)\,(1+O(\beta))\,[1-\Dk]},
\end{equation}
which implies (\ref{eqInfraredBound}).
\qed

We proceed by validating  (\ref{eqBoundPhiK})--(\ref{eqBoundPhi}).
First we realize that Assumption \ref{assumptionDiagrammaticBounds} indeed holds for the models under consideration:
\begin{prop}\label{propBoundPhiPsi}
Under the assumptions in Theorem \ref{theoremInfraredBound}, Assumption \ref{assumptionDiagrammaticBounds} holds for self-avoiding walk, percolation and the Ising model.
\end{prop}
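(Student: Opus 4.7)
\textbf{Proof plan for Proposition \ref{propBoundPhiPsi}.} The task is to verify the four bounds in Assumption \ref{assumptionDiagrammaticBounds} model by model, assuming $f(z)\le K$ uniformly in $z\in(0,z_c)$. My plan is not to re-derive the lace expansions themselves, but to invoke the \emph{diagrammatic bounds} on $|\Psi_z(x)|$ and $|\Phi_z(x)|$ already established in the literature: for self-avoiding walk these are in \cite[Chapter~4]{Slade06}; for percolation they appear in \cite{BorgsChayeHofstSladeSpenc05b}; for the Ising model they are given in \citeAkira{Proposition~4.1}. In each case the bounds write $|\Psi_z(x)|$ and $|\Phi_z(x)|$ as sums of products of two-point functions $G_z$ arranged into bubble-like diagrams (for SAW/Ising, $s=2$) or triangle-like diagrams (for percolation, $s=3$), with each diagram containing at least one extra ``short'' line carrying a factor $D$.

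First I would handle the unweighted bounds~\refeq{BoundPhiK}. Applying the trivial bound $|\hat\Psi_z(k)|\le\hat\Psi_z(0) = \sum_x |\Psi_z(x)|$ (and likewise for $\Phi_z$), the diagrammatic representations reduce the problem to estimating $x$-space bubble/triangle-type convolutions involving $G_z$ and $D$. Using the hypothesis $f_2(z)\le K$, we have $\hat G_z(k)\le K\,\hat C_{\lambda_z}(k)$ in Fourier space, and by Parseval each bubble/triangle is bounded by a constant multiple of the random walk bubble/triangle integral $\int \hat D(k)^2 [1-\hat D(k)]^{-s}\,\mathrm{d}k/(2\pi)^d$, which is at most $\beta$ by Assumption \ref{assumptionBeta}. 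The extra $D$-line guarantees at least one factor of $\hat D$ in the diagram, which upgrades mere finiteness of the random walk bubble/triangle to smallness of order $\beta$. The bound $f_1(z)=\tau(z)\le K$ handles the prefactor $\tau(z)$ appearing in some of the lace-expansion diagrams (this is particularly relevant for the Ising model).

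For the $[1-\cos(k\cdot x)]$-weighted bounds in~\refeq{BoundPhi} I would follow the trigonometric approach of \cite{BorgsChayeHofstSladeSpenc05b}. The crucial tool is the elementary inequality
\[
1-\cos\bigl(k\cdot (x_1+\cdots+x_n)\bigr)\;\le\;n\sum_{i=1}^n\bigl[1-\cos(k\cdot x_i)\bigr],
\]
which allows the weight $[1-\cos(k\cdot x)]$ to be distributed over the individual lines of the diagram representing $\Psi_z(x)$ or $\Phi_z(x)$. Each of the resulting diagrams contains precisely one two-point function line carrying a cosine weight, and by Parseval such a weighted line equals $\sum_y [1-\cos(k\cdot y)]G_z(y)$-type factor whose Fourier-space counterpart is $-\tfrac12\Delta_k \hat G_z(\ell)$. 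Using $f_3(z)\le K$, each such factor is pointwise dominated by $U_{\lambda_z}(k,\ell)$, and after absorbing all remaining (unweighted) $\hat G_z$'s via $f_2\le K$ and $\hat G_z\le K\hat C_{\lambda_z}$, one is left with random walk integrals that, after careful bookkeeping, factor as $\hat C_{\lambda_z}(k)^{-1}$ times an integral whose size is controlled by $\beta$ through Assumption~\ref{assumptionBeta}. The extra factor $\tau(z)$ on the right-hand side of the bound on $\Phi_z$ arises because the $\Phi_z$-diagrams carry at least one more $D$-line than the $\Psi_z$-diagrams.

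The main obstacle will be the weighted bound: tracking the trigonometric splitting across all the distinct diagram topologies (especially the more numerous percolation triangle diagrams and the Ising ``random-current'' diagrams) and verifying that in every case the extraction of the $\hat C_{\lambda_z}(k)^{-1}$ factor is compatible with the remaining $\beta$-small integrals. The organization is almost entirely parallel across models because the $U_{\lambda_z}$-bound in the definition of $f_3$ was designed to mirror the structure of $\Delta_k\hat G_z(\ell)$; nevertheless, the Ising case requires the additional input from \citeAkira{Section~4} on random-current representations to reduce spin-correlation diagrams to convolutions of $G_z$'s, so my plan is to treat SAW and percolation together along the lines of \cite{BorgsChayeHofstSladeSpenc05b} and then adapt the argument to the Ising diagrams of \cite{Sakai07} as a final step.
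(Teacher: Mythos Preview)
Your plan is correct and matches the paper's approach. The paper organizes the argument by first isolating, for each model, an intermediate proposition giving bounds of the form $\sum_x|\Pi_{\sss M}(x)|\le c_K\beta$ and $\sum_x[1-\cos(k\cdot x)]|\Pi_{\sss M}(x)|\le c_K\beta\,\hat C_{\lambda_z}(k)^{-1}$ (Propositions~\ref{propBoundSaw}, \ref{propBoundPerc}, \ref{propBoundIsing}, taken with minor modifications from \cite{Slade06,BorgsChayeHofstSladeSpenc05b,Sakai07}), and then deduces Assumption~\ref{assumptionDiagrammaticBounds} from these via the definitions $\Phi_z=\tau(z)(D\ast\Pi_{\sss M})$, $\Psi_z=\Pi_{\sss M}+R_{\sss M}$ together with one application of the two-term cosine splitting $1-\cos(t_1+t_2)\le 5\big([1-\cos t_1]+[1-\cos t_2]\big)$ to handle the extra $D$-convolution; the Ising adaptation is carried out separately in Appendix~\ref{appendixBoundsIsing}. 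Your description unpacks more of the diagrammatic machinery behind those intermediate propositions (the use of $f_2,f_3\le K$ and Parseval), but the overall architecture---cite/adapt the literature bounds on $\Pi_{\sss M}$, then transfer them to $\Phi_z,\Psi_z$ by trigonometric splitting---is the same.
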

The relevant bounds have been proven by Slade \cite{Slade06} for self-avoiding walk, by Borgs et al.\ \cite{BorgsChayeHofstSladeSpenc05b} for percolation (on finite graphs), and by Sakai \cite{Sakai07} for the Ising model.
In Section \ref{sectLaceExpansion} we state the diagrammatic bounds proved in these papers, and relate them to our version of $\Phi_z$ and $\Psi_z$, thus proving Proposition \ref{propBoundPhiPsi} using \cite{BorgsChayeHofstSladeSpenc05b,Sakai07,Slade06}.

\subsection{Completion of the argument and organization of proofs}
\label{sectInfraredBound}
The proof of Theorem \ref{theoremInfraredBound-GeneralForm} will follow from the following proposition:
\begin{prop}\label{theoremLaceExpansion}
Suppose we are given a model with some model-dependent constant $s\in\{2,3,\dots\}$, and a two-point function $G_z$ of the form
(\ref{eqDefGx}), where the step distribution $D$ satisfies Assumption \ref{assumptionBeta},
and $\Phi_z$ and $\Psi_z$ satisfy Assumption \ref{assumptionDiagrammaticBounds},
both for the same sufficiently small $\beta>0$.
Assume further that $\chi'(z)\le O(\chi(z)^2)$, $z\in[0,z_c)$.
Then
\begin{equation}\label{eqBoundOnF}
    f(z)\le1+O(\beta)
\end{equation}
uniformly for $z< z_c$.
\end{prop}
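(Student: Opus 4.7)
The plan is to run a bootstrap (``forbidden region'') argument on the function $f=f_1\vee f_2\vee f_3$. I would first verify an initialization and a continuity statement, then establish the key improvement inequality, and conclude by a soft topological argument.

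\textbf{Initialization and continuity.} At $z=0$ we have $G_0(x)=\delta_{0,x}$, so $\chi(0)=1$, $\lambda_0=0$, $\hat C_{\lambda_0}(k)\equiv 1$, and $\tau(0)=0$. Hence $f_1(0)=0$, $f_2(0)=1$, and $f_3(0)=0$ (second differences of the constant function $\hat G_0\equiv 1$ vanish), yielding $f(0)=1$. Next I would show $f$ is continuous on $[0,z_c)$: continuity of $z\mapsto\chi(z)$ follows from monotone convergence together with the hypothesis $\chi'(z)\le O(\chi(z)^2)$, and the continuity of $z\mapsto\hat G_z(k)$ uniformly in $k\in\Td$ follows from the representation obtained in Section~\ref{sectFourier} using dominated convergence and the diagrammatic bounds on $\hat\Psi_z,\hat\Phi_z$. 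This propagates to continuity of $f_2$ and $f_3$ through the explicit formulas for $\hat C_{\lambda_z}$ and $U_{\lambda_z}$ in terms of $\chi(z)$.

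\textbf{Improvement step.} Fix some moderate constant $K$ (e.g.\ $K=3$) and assume $f(z)\le K$, so that Assumption~\ref{assumptionDiagrammaticBounds} applies with constant $c_K$. I would bound each $f_i$ separately. For $f_1$, setting $k=0$ in the derivation leading to (\ref{eqMainProof5})--(\ref{eqMainProof7}) gives $\tau(z)=1-\hat\Phi_z(0)-(1+\hat\Psi_z(0))/\chi(z)$, and the first inequality in (\ref{eqBoundPhiK}) immediately yields $\tau(z)\le 1+O(\beta)$. For $f_2$, I use (\ref{eqMainProof8}) to compute
\begin{equation*}
\frac{\hat G_z(k)}{\hat C_{\lambda_z}(k)}
= \frac{(1+O(\beta))\,\hat C_{\lambda_z}(k)^{-1}}{m_z+\tau(z)[1-\hat D(k)]+\tau(z)\,O(\beta)\,\hat C_{\lambda_z}(k)^{-1}},
\end{equation*}
together with the identity $\hat C_{\lambda_z}(k)^{-1}=[1-\hat D(k)]+\hat D(k)/\chi(z)$ from (\ref{eqMainProof9}) and $m_z=(1+O(\beta))/\chi(z)$ from (\ref{eqMainProof7}). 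The numerator of the denominator then factors as $\tau(z)\,\hat C_{\lambda_z}(k)^{-1}(1+O(\beta))$, giving $f_2(z)\le 1+O(\beta)$.

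\textbf{The hard step: $f_3$.} The main technical obstacle is $f_3$. I would apply the discrete second difference $\Delta_k$ to the identity $\hat G_z^{-1}=(1+\hat\Psi_z)^{-1}[1-\tau(z)\hat D-\hat\Phi_z]$ and use the algebraic rule
\begin{equation*}
\Delta_k\!\left(\tfrac{1}{A}\right)\!(l)
= -\tfrac{\Delta_k A(l)}{A(l-k)A(l+k)}
+\tfrac{[A(l-k)-A(l+k)]^2}{A(l)A(l-k)A(l+k)}
+\tfrac{2\,\Delta_k A(l)}{A(l)A(l\pm k)}\cdot(\text{cross terms}),
\end{equation*}
so that each occurrence reduces to $\Delta_k$ applied to $\hat D$, $\hat\Phi_z$, or $\hat\Psi_z$. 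For $\hat D$ the second difference is bounded via Assumption~\ref{assumptionBeta}; for $\hat\Phi_z$ and $\hat\Psi_z$ the second bound in (\ref{eqBoundPhi}) gives exactly the $[1-\cos(k\cdot x)]$-weighted bound needed, since $\Delta_k\hat F(l)=-\sum_x F(x)[1-\cos(k\cdot x)][\cos((l-k)\cdot x)+\cos((l+k)\cdot x)]$. Combining these estimates with $f_2(z)\le K$ (to compare $\hat G_z$-factors appearing in the denominators with $\hat C_{\lambda_z}$-factors) and matching them against the three $\hat C_{\lambda_z}$-product terms in $U_{\lambda_z}(k,l)$ (whose prefactor $200$ is chosen precisely to absorb the various combinatorial constants), I obtain $f_3(z)\le 1+O(\beta)$.

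\textbf{Conclusion.} The three improvements together give $f(z)\le 1+O(\beta)$ on $\{z\in[0,z_c):f(z)\le K\}$. For $\beta$ small enough, $1+O(\beta)<2<K$, so this set is both open and (by continuity) closed in $[0,z_c)$, and contains $0$. Hence it equals all of $[0,z_c)$, establishing (\ref{eqBoundOnF}). The main obstacle throughout is the bookkeeping in the $f_3$ bound, where second differences of products and quotients must be expanded and each piece matched with one of the three terms composing $U_{\lambda_z}$.
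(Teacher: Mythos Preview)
Your overall bootstrap scheme matches the paper's, but there are two genuine gaps.

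First, your continuity argument is circular. You invoke ``the diagrammatic bounds on $\hat\Psi_z,\hat\Phi_z$'' to get continuity of $z\mapsto\hat G_z(k)$, but those bounds are the content of Assumption~\ref{assumptionDiagrammaticBounds}, which is only available \emph{after} you know $f(z)\le K$. The forbidden-region argument needs continuity of $f$ on all of $[0,z_c)$ established \emph{a priori}. The paper (Lemma~\ref{lemmaContinuity}) does this by differentiating $\hat G_z(k)/\hat C_{\lambda_z}(k)$ directly in $z$ and bounding each piece using only $|\hat G_z(k)|\le\chi(z)$, $|\partial_z\hat G_z(k)|\le\chi'(z)\le c\chi(z)^2$, and elementary bounds on $\hat C_{\lambda_z}$; no lace-expansion input enters.

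Second, your $f_3$ step as written cannot close. You say you will compare $\hat G_z$-factors with $\hat C_{\lambda_z}$-factors via ``$f_2(z)\le K$'', but that produces powers of $K$ (in fact $K^3$) in front of the $\hat C_{\lambda_z}$-products, and the prefactor $200$ in $U_{\lambda_z}$ cannot absorb these. The paper's proof first improves to $f_2\le 1+O(\beta)$ and then uses \emph{that} bound, so the $\hat A$-factors in Lemma~\ref{lemmaTrigo} contribute only $(1+O(\beta))^3$. Moreover, your raw expansion of $\Delta_k(1/A)$ does not deliver the right structure: the cross term is $-2(A_{-}-A_0)(A_{+}-A_0)/(A_{-}A_0A_{+})$, which carries three $\hat A$-factors, and bounding the first differences solely in terms of $\widehat{|a|}(0)-\widehat{|a|}(k)$ leaves an uncontrolled $\hat A(l)$. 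The point of Lemma~\ref{lemmaTrigo} is precisely that its second term contains an additional factor $\widehat{|a|}(0)-\widehat{|a|}(l)$, which cancels against $\hat A(l)$ via $\widehat{|a|}(0)-\widehat{|a|}(l)\le 5\,\hat C_{\lambda_z}(l)^{-1}$; this is what makes the numerology $8\cdot 5^2=200$ work. (As a minor aside, your $f_2$ denominator does not factor as $\tau(z)\,\hat C_{\lambda_z}(k)^{-1}(1+O(\beta))$---at $z=0$ that would vanish---but rather as $\hat C_{\lambda_z}(k)^{-1}(1+O(\beta))$; with this correction your $f_2$ argument is fine and even slightly cleaner than the paper's, which explicitly feeds $f_2\le 4$ back in.)
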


The assumption $\chi'(z)\le \const\chi(z)^2$ in Proposition \ref{theoremLaceExpansion} can be replaced by assuming that $f$ is continuous on $[0,z_c)$, cf.\ Lemma \ref{lemmaContinuity} below.
It is known as a \emph{mean-field bound}, and a proof of it can be found in \cite[Theorem 2.3]{Slade06} for self-avoiding walk, and in \cite[Prop.\ 9.2]{Slade06} for percolation. For the Ising model, this mean-field bound is a consequence of the Lebowitz inequality \cite{Lebow74}.

In order for Theorem \ref{theoremInfraredBound-GeneralForm} (and hence Theorem \ref{theoremInfraredBound} and Corollary \ref{corolGammaCondition}) to hold,
we need to show (\ref{eqBoundPhiK})--(\ref{eqBoundPhi}).
Indeed, (\ref{eqBoundPhiK})--(\ref{eqBoundPhi}) follow from the statements above, as we explain now.
Propositions \ref{propBetaSufficient} and \ref{propBoundPhiPsi}
validate Assumptions \ref{assumptionBeta} and \ref{assumptionDiagrammaticBounds}.
With these assumptions, the prerequisites of Proposition \ref{theoremLaceExpansion} are satisfied and (\ref{eqBoundOnF}) holds for $\beta$ sufficiently small by Proposition \ref{propBetaSufficient}.
The latter can be achieved by taking $d$ or $L$ large enough.
Then we again use Assumption \ref{assumptionDiagrammaticBounds} to obtain (\ref{eqBoundPhiK})--(\ref{eqBoundPhi}),
thus proving (\ref{eqInfraredBound}).

The remainder of the paper is organized as follows.
In Section \ref{sectBetaCondition} we prove Proposition \ref{propBetaSufficient} by showing that Assumption \ref{assumptionBeta} is satisfied for our versions of $D$.
For the proof of Proposition \ref{propBoundPhiPsi} we need the \emph{lace expansion}.
The diagrammatic bounds are not derived in the present paper;
instead we explain in Section \ref{sectLaceExpansion} how to obtain the statement of Proposition \ref{propBoundPhiPsi} from the diagrammatic bounds in \cite{Slade06} for self-avoiding walk, \cite{BorgsChayeHofstSladeSpenc05b} for percolation, and \cite{Sakai07} for the Ising model.
Finally, the proof of Proposition \ref{theoremLaceExpansion} is contained in the last Section \ref{sectAnalysisLE},
and this completes the proof of Theorem \ref{theoremInfraredBound-GeneralForm} (and hence of Theorem \ref{theoremInfraredBound} and Corollary \ref{corolGammaCondition}).
Appendix \ref{appendixCritExp} contains a derivation of the existence and the mean-field values of the critical exponents $\gp$ and $\dep$ for percolation.
In Appendix \ref{appendixBoundsIsing} we show how the bounds on the lace expansion in Assumption \ref{assumptionDiagrammaticBounds} for the Ising model can be obtained from the diagrammatic bounds in \cite{Sakai07}. Our account in Appendix \ref{appendixBoundsIsing} follows the proof of \cite[Prop.\ 3.2]{Sakai07}, but with a modified bootstrap hypothesis.

\section{The random walk two-point function}\label{sectBetaCondition}
In this section we prove Proposition \ref{propBetaSufficient}.
The estimates below are contained in \cite[Sect.\ 2.2.2]{BorgsChayeHofstSladeSpenc05b}, where finite tori are considered. Restriction to the infinite lattice gives rise to a noteworthy simplification, which we shall present in the following.
\begin{proof}[Proof of Proposition \ref{propBetaSufficient} for the nearest-neighbor model.]
We follow \cite[Sect.\ 2.2.2]{BorgsChayeHofstSladeSpenc05b}.
Since $\|D\|_\infty=(2d)^{-1}$, the bound (\ref{eqBeta0}) is satisfied for $d$ sufficiently large, and it remains to prove (\ref{eqBeta}).

By the symmetry of $D$ we have
\begin{equation}\label{eqBeta1}
    \Dk=\sum_{x\in\Zd}D(x)\,\cos(k\cdot x)=\frac{1}{d}\sum_{j=1}^d\cos(k_j),\qquad k=(k_1,\dots,k_d)\in\Td.
\end{equation}
Since $1-\cos t\ge2\pi^{-2}t^2$ for $|t|\le\pi$, this implies the infrared bound
\begin{equation}\label{eqBeta5}
    1-\Dk\ge\frac{2}{\pi^2}\frac{|k|^2}{d}.
\end{equation}
The Cauchy-Schwarz inequality\footnote{The H\"older inequality gives better bounds here. In particular, it requires $d>2s$ only, cf.\ (2.19) in \cite{BorgsChayeHofstSladeSpenc05b}.}
yields
\begin{equation}\label{eqBeta2}
    \int_{\Td}\frac{\Dk^2}{[1-\Dk]^s}\,\dk\le\left(\int_{\Td}\Dk^{4}\,\dk\right)^{1/2}\left(\int_{\Td}\frac{1}{[1-\Dk]^{2s}}\,\dk\right)^{1/2}
\end{equation}
First we show that the first term on the right hand side of (\ref{eqBeta2}) is small if $d$ is large.
Note that $\int_{\Td} \Dk^4$ $(2\pi)^{-d} \,\d k$ $=$ $D^{\ast4} (0)$ is the probability that a nearest-neighbor random walk returns to its starting point after the fourth step. This is bounded from above by $c(2d)^{-2}$ with $c$ being a well-chosen constant,
because the first two steps must be compensated by the last two.
Finally, the square root yields the upper bound $O(d^{-1})$.

It remains to show that the second term on the right of (\ref{eqBeta2}) is bounded uniformly in $d$.
The infrared bound (\ref{eqBeta5}) gives
\begin{equation}\label{eqBeta6}
    \int_{\Td}\frac{1}{[1-\Dk]^{2s}}\,\dk\le\frac{\pi^{4s}}{2^{2s}}\int_{\Td}\frac{d^{2s}}{|k|^{4s}}\,\dk.
\end{equation}
The right hand side of (\ref{eqBeta6}) is finite if $d>4s$.
For $A>0$ and $m>0$,
\begin{equation}\label{eqBeta3}
    \frac{1}{A^m}=\frac{1}{\Gamma(m)}\int_0^\infty t^{m-1}\e^{-tA}\d t.
\end{equation}
Applying this with $A=|k|^2/d$ and $m=2s$ yields
\begin{equation}\label{eqBeta4}
    \frac{1}{\Gamma(2s)}\;\frac{\pi^{4s}}{2^{2s}} \int_0^\infty t^{2s-1}\Bigg(\int_{-\pi}^\pi\big(\e^{-t\theta^2}\big)^{1/d}\,\frac{\d\theta}{2\pi}\Bigg)^d\d t
\end{equation}
as an upper bound for (\ref{eqBeta6}).
This is non-increasing in $d$, because $\|f\|_p\le\|f\|_q$ for $0<p\le q\le\infty$ on a probability space by Lyapunov's inequality.
\end{proof}

\begin{proof}[Proof of Proposition \ref{propBetaSufficient} for the spread-out models.]
We again follow \cite[Sect.\ 2.2.2]{BorgsChayeHofstSladeSpenc05b}.
Obviously (\ref{eqBeta0}) is implied by condition (D2)/(D$2'$) for sufficiently large $L$, hence it remains to prove (\ref{eqBeta}).

The power-law spread-out model with $\alpha>2$ satisfies the finite variance condition (D1) with $\eps<\alpha-2$.
Note further that (D3) and (D$3'$) agree when the exponent in the first inequality is taken $\alpha\wedge2$.

We separately consider the regions $\|k\|_\infty\le L^{-1}$ and $\|k\|_\infty> L^{-1}$.
By (\ref{eqPropD1}), (\ref{eqPropDPowerLaw1}) and the bound $\Dk^2\le1$, the corresponding contributions to the integral are
\begin{equation}\label{eqBeta7}
    \int_{k:\|k\|_\infty\le L^{-1}}\frac{\Dk^2}{[1-\Dk]^s}\dk
    \le\frac{1}{c_1^sL^{(\alpha\wedge2)s}}\int_{k:\|k\|_\infty\le L^{-1}}\frac{1}{|k|^{(\alpha\wedge2)s}}\dk
    \le C_{d,c_1}L^{-d}
\end{equation}
if $d>(\alpha\wedge2)s$, where $C_{d,c_1}$ is a constant depending (only) on $d$ and $c_1$,
and by (\ref{eqPropD2}), (\ref{eqPropDPowerLaw2}),
\begin{equation}\label{eqBeta8}
    \int_{k:\|k\|_\infty> L^{-1}}\frac{\Dk^2}{[1-\Dk]^s}\dk
    \le {c_2}^{-s}\int_{k:\|k\|_\infty> L^{-1}}\Dk^2\dk
    \le \const L^{-d},
\end{equation}
for some positive constant. In the last step we used assumption (D2) / (D$2'$) to see that
\begin{equation}
    \int_{k\in\Td}\Dk^2\dk=(D\ast D)(0)=\sum_{y\in\Zd}D(y)^2\le\sum_{y\in\Zd}D(y)\,\|D\|_\infty=\|D\|_\infty\le\const L^{-d}.
\end{equation}
\end{proof}

\section{The lace expansion}\label{sectLaceExpansion}

In this section, we discuss the \emph{lace expansion} which obtains an expansion of the two-point function of the form
$$G_z(x)=\delta_{0,x}+\tau(z)\left(D\ast G_z\right)(x)+\left(G_z\ast\Phi_z\right)(x)+\Psi_z(x), $$
cf.\ (\ref{eqDefGx}).
The key point is to identify the \emph{lace-expansion coefficients} $\Phi_z$ and $\Psi_z$ in a way that allows for sufficient bounds, known as \emph{diagrammatic bounds}.
The derivation is not carried out in this paper; full expansions and detailed derivations of the diagrammatic bounds are performed in \cite{Hofst05,Slade06} for self-avoiding walk, in \cite{BorgsChayeHofstSladeSpenc05b} for percolation and in \cite{Sakai07} for the Ising model.

\subsection{The lace expansion for the self-avoiding walk}\label{sectLaceExplansionSaw}
The lace expansion for self-avoiding walks was first presented by Brydges and Spencer \cite{BrydgSpenc85}. They provide an algebraic expansion using graphs. A special class of graphs that play an important role here, the laces, gave the lace expansion its name.
An alternative approach is based on an inclusion-exclusion argument, and was first presented by Slade \cite{Slade87}.

We refer the reader to \cite[Sect.\ 2.2.1]{Hofst05} or \cite[Sect.\ 3]{Slade06} for a full derivation of the expansion.
For example, in \cite[Sect.\ 2.2.1]{Hofst05} it is shown that
\begin{equation}\label{e:cnk}
    {c}_{n+1}(x) = ({D}\ast{c}_n)(x)
    + \sum_{m=2}^{n+1} \left({\pi}_m \ast {c}_{n+1-m}\right)(x)
\end{equation}
for suitable $\pi_m(x)$.
We multiply (\ref{e:cnk}) by $z^{n+1}$ and sum over $n\ge0$.
By letting $\Pi_z(x)=\sum_{m=2}^\infty\pi_m(x)z^m$ and recalling
$G_z(x)=\sum_{n=0}^\infty c_n(x)z^n$ this yields
\begin{equation}\label{eqSawExpansion}
    G_z(x)=\delta_{0,x}+z(D\ast G_z)(x)+(G_z\ast\Pi_z)(x),
\end{equation}
see also \cite[(3.27)]{Slade06}.
For the lace expansion coefficient $\Pi_z$ the following diagrammatic bound is proven:
\begin{prop}[Diagrammatic estimates for self-avoiding walk from {\cite{Slade06}}]\label{propBoundSaw}
Fix $z\in(0,z_c)$.
If $f(z)$ of (\ref{eqDefF}) obeys $f(z)\le K$, then there are positive constants $c_K$ and $\beta_0=\beta_0(K)$, such that the following holds:
If Assumption \ref{assumptionBeta} holds for some $\beta\le\beta_0$, then
\begin{equation}\label{eqSawBound1}
    \sum_{x\in\Zd}\left|\Pi_z(x)\right|\le c_K\beta,
\end{equation}
\begin{equation}\label{eqSawBound2}
    \sum_{x\in\Zd}[1-\cos(k\cdot x)]\left|\Pi_z(x)\right|\le c_K\beta\, \Ci, \qquad k\in\Td.
\end{equation}
\end{prop}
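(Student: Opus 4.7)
My plan is to follow the standard lace-expansion approach for self-avoiding walk as developed in Slade's lecture notes \cite{Slade06}. Recall from \cite[Sect.\ 3]{Slade06} that the expansion (\ref{eqSawExpansion}) comes with a decomposition
\begin{equation}
    \Pi_z(x) = \sum_{N=1}^{\infty}(-1)^N \Pi_z^{\ssup{N}}(x),
\end{equation}
where $\Pi_z^{\ssup{N}}(x)$ is a sum over self-avoiding walks from $0$ to $x$ weighted according to an $N$-bond lace. The first step is to invoke the combinatorial diagrammatic bounds (see \cite[Prop.\ 4.1]{Slade06}) which control $\Pi_z^{\ssup{N}}(x)$ by a ladder-like Feynman diagram $P^{\ssup{N}}(x)$ built from $N$ nested "bubbles", each bubble involving convolutions of the two-point function $G_z$.

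For the bound (\ref{eqSawBound1}), I would sum $\sum_x P^{\ssup{N}}(x)$ using the bootstrap hypothesis $f(z)\le K$. The hypothesis $f_2(z)\le K$ together with Parseval gives $\sum_x G_z(x)^2 = \int \hat G_z(k)^2\,\dk \le K^2 \int \ClK^2 \dk$, which one bounds by $1+c K^2\beta$ using Assumption \ref{assumptionBeta}. For the smallest diagram ($N=1$), extracting two factors of $D$ via $\tau(z)^2(D\ast G_z \ast D \ast G_z)(0)$ and applying the random walk bubble condition (\ref{eqBeta}) yields an $O(\beta)$ bound. For larger $N$, each additional bubble either contributes a full bubble $\sum_x G_z(x)^2 \le 1+O(\beta)$ or contributes an $O(\beta)$ small factor in a way that lets one sum the resulting geometric series in $N$, giving $\sum_x|\Pi_z(x)|\le c_K\beta$ for $\beta$ sufficiently small relative to $K$.

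The bound (\ref{eqSawBound2}) is the main obstacle and requires the cosine-splitting technique. Since each walk contributing to $\Pi_z^{\ssup{N}}$ has total displacement $x = y_1+\cdots+y_m$ decomposed along the subwalks forming the diagram lines, one applies
\begin{equation}
    1-\cos(k\cdot x) \le m\sum_{i=1}^{m}\bigl[1-\cos(k\cdot y_i)\bigr]
\end{equation}
to distribute the weight $1-\cos(k\cdot x)$ over the individual lines of the diagram. For the line that picks up the cosine, one encounters a factor of the form $\sum_y[1-\cos(k\cdot y)]G_z(y)\,H(y)$ for some "rest-of-diagram" function $H$. By Parseval this becomes an integral involving $-\tfrac12\Delta_k\hat G_z(l)$ (the discretized second derivative), which by the bootstrap hypothesis $f_3(z)\le K$ is bounded pointwise by $\tfrac{K}{2}\,\Ul$. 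The three terms in $\Ul$ then each combine with the remaining diagram lines into integrals that are controlled by the random walk $s$-condition, and crucially the global prefactor $\ClK^{-1}$ in the definition (\ref{eqDefU}) of $\Ul$ is preserved, yielding the desired $\Ci$ in (\ref{eqSawBound2}).

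Finally, to complete the proof one sums over the choice of which line receives the cosine factor (polynomially many choices per $N$), over the $N$ in the lace expansion (geometric series convergent for small $\beta$), and chooses $\beta_0=\beta_0(K)$ small enough that both bounds hold with a constant $c_K$ depending only on $K$. The hardest part conceptually is the cosine-splitting step in the second bound, since one must carefully track which line receives the $[1-\cos(k\cdot x)]$ factor and verify that the resulting integrals produce exactly one power of $\Ci$; this is where the specific form (\ref{eqDefU}) of $\Ul$ has been designed to absorb the three possible placements coming from the discrete second-derivative bound.
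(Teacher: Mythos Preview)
Your outline is correct and matches the approach the paper relies on: the paper does not give its own proof of this proposition but simply cites \cite[Lemma 5.11]{Slade06}, and your sketch is precisely the argument carried out there (decomposition into $\Pi_z^{\ssup{N}}$, diagrammatic bounds via nested bubbles, cosine-splitting combined with the $f_3$ bound to produce the factor $\Ci$). One small correction: the cosine-splitting inequality you state should have a constant of order $m$ but not exactly $m$; the version used in this paper (see \refeq{SumCosBound}) and in \cite{BorgsChayeHofstSladeSpenc05b} reads $1-\cos\bigl(\sum_{i}t_i\bigr)\le(2m+1)\sum_i[1-\cos t_i]$, and you should also be aware that the paper's bootstrap function uses $\Cl$ rather than $\hat C_1$, so the bounds in \cite{Slade06} need the minor replacement $1-\Dk\mapsto\Ci$ via \refeq{DCbound}, exactly as the paper spells out for percolation in Section~\ref{sectLaceExpansionPercolation}.
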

The term \emph{diagrammatic estimate} originates from the fact that $\Pi_z$ is expressed in terms of diagrams.
The underlying structure expressed in terms of these diagrams is heavily used to obtain the bounds in (\ref{eqSawBound1}) and (\ref{eqSawBound2}).

A proof of Prop.\ \ref{propBoundSaw} can be found in \cite[Lemma 5.11]{Slade06}, and we do not repeat it here.
In fact, the proof in \cite{Slade06} can be modified to obtain
\begin{equation}\label{eqSawBound2a}
    \sum_{x\in\Zd}[1-\cos(k\cdot x)]\left|\Pi_z(x)\right|\le z\,c_K\beta\, \Ci, \qquad k\in\Td.
\end{equation}
instead of (\ref{eqSawBound2}). This is achieved by leaving the factor $z$ in \cite[(5.42) and (5.43)]{Slade06} explicit (rather then bounding above by $K$).

We choose $\tau(z)=z$, $\Phi_z(x)=\Pi_z(x)$ and $\Psi_z(x)=0$, which makes (\ref{eqSawExpansion}) equivalent to (\ref{eqDefGx}).
Hence Prop.\ \ref{propBoundSaw} along with (\ref{eqSawBound2a}) is sufficient to prove Proposition \ref{propBoundPhiPsi} for self-avoiding walk.

\subsection{The lace expansion for percolation}\label{sectLaceExpansionPercolation}
The lace expansion for percolation was first derived in \cite{HaraSlade90a}.
It is based on an inclusion-exclusion argument, and holds quite generally for any connected graph, finite or infinite.
The graph does not even need to be transitive or regular.

In \cite[Sect.\ 3.2]{BorgsChayeHofstSladeSpenc05b}, the identity
\begin{equation}\label{eqPercExpansion1}
G_z(x)=\delta_{0,x}+z\left(D\ast G_z\right)(x)+z\left(\Pi_\sM\ast D\ast G_z\right)(x)+\Pi_\sM(x)+R_\sM(x)
\end{equation}
is derived for $M=0,1,2,\dots$.
The $z$-dependence of $\Pi_\sM$ and $R_\sM$ is left implicit.
The function $\Pi_\sM\colon\Zd\to\R$ is the central quantity in the expansion, and $R_\sM(x)$ is a remainder term.
When the expansion converges, one has
\begin{equation}\label{eqPercExpansion3}
    \lim_{M\to\infty}\sum_x|R_\sM(x)|=0.
\end{equation}
The subscript $M$ denotes the level to which the (inclusion-exclusion) expansion is carried out, and we shall later fix $M$ so large that (\ref{eqDiaBoundPerco3}) and (\ref{eqDiaBoundPerco4}) below are satisfied for $K=4$.
The equality (\ref{eqPercExpansion1}) is equivalent to (\ref{eqDefGx}) if we let $\tau(z)=z$, and
\begin{equation}\label{eqDefPhiPerc}
    \Phi_z(x)=z(D\ast\Pi_\sM)(x),\qquad x\in\Zd,
\end{equation}
\begin{equation}\label{eqDefPsiPerc}
    \Psi_z(x)=\Pi_\sM(x)+R_\sM(x),\qquad x\in\Zd.
\end{equation}

The key point is that $\Pi_\sM$ and $R_\sM$ satisfy useful \emph{diagrammatic bounds}:
\begin{prop}[Diagrammatic estimates for percolation from {\cite{BorgsChayeHofstSladeSpenc05b}}]\label{propBoundPerc}
Fix $z\in(0,z_c)$.
If $f(z)$ of (\ref{eqDefF}) obeys $f(z)\le K$, then there are positive constants $c_K$ and $\beta_0=\beta_0(K)$, such that the following holds:
If Assumption \ref{assumptionBeta} holds for some $\beta\le\beta_0$, then for all $M=0,1,2,\dots$,
\begin{equation}\label{eqDiaBoundPerco1}
    \sum_x|\Pi_\sM(x)|\le c_K\beta,
\end{equation}
\begin{equation}\label{eqDiaBoundPerco2}
    \sum_x[1-\cos(k\cdot x)]\,|\Pi_\sM(x)|\le c_K\beta\Ci,
\end{equation}
and for $M$ sufficiently large (depending on $K$ and $z$),
\begin{equation}\label{eqDiaBoundPerco3}
    \sum_x|R_\sM(x)|\le\beta,
\end{equation}
\begin{equation}\label{eqDiaBoundPerco4}
    \sum_x[1-\cos(k\cdot x)]\,|R_\sM(x)|\le\beta\Ci.
\end{equation}
\end{prop}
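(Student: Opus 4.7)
The plan is to combine three ingredients: (i)~the decomposition of $\Pi_M$ into non-negative ``sausage'' diagrams $\Pi^{\ssup{N}}$ via the BK-type inequalities underlying the percolation lace expansion; (ii)~the hypothesis $f(z)\le K$, which replaces each occurrence of $G_z$ in such a diagram by a controllable multiple of $C_{\lambda_z}$; and (iii)~the random walk triangle condition with $s=3$ in Assumption~\ref{assumptionBeta}, which turns the resulting convolutions into powers of $\beta$.

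First I would write $\Pi_M(x)=\sum_{N=0}^{M}(-1)^N\Pi^{\ssup{N}}(x)$ as in \cite{BorgsChayeHofstSladeSpenc05b}, and apply BK-type tree-graph inequalities to bound $|\Pi^{\ssup{N}}(x)|$ pointwise by a non-negative diagram consisting of $N{+}1$ sausages joined at pivotal vertices, each sausage being a product of $G_z$-lines. Summing over $x$ converts the expression, via Parseval, into a Fourier integral of a product of $\G_z$'s. The hypothesis $f_2(z)\le K$ gives $\G_z(k)\le K\,\ClK$ pointwise, so every such integral is bounded by a constant depending on $K$ and $N$ times a product of random walk triangles
\begin{equation*}
\int_\Td\frac{\Dk^2}{[1-\Dk]^3}\,\dk\le\beta,
\end{equation*}
the bound being Assumption~\ref{assumptionBeta} for $s=3$. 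Extracting one such triangle per sausage and combining with the tree-graph combinatorics leads to an $N$-dependent bound that is geometrically summable for $\beta$ small, yielding \eqref{eqDiaBoundPerco1}.

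For the cosine bound \eqref{eqDiaBoundPerco2}, I would invoke the split inequality $1-\cos(k\cdot x)\le\sum_j n_j\,[1-\cos(k\cdot x_j)]$ applied to a decomposition $x=\sum_j x_j$ dictated by the diagram. The extra factor is placed on one designated $G_z$-line, whose Fourier transform becomes $|\Delta_k\G_z(l)|$; the hypothesis $f_3(z)\le K$ then bounds this by $K\,\Ul$. Pulling out the $\ClK^{-1}$ prefactor from \eqref{eqDefU} and estimating the remaining lines as before yields the claimed form. For $R_M$, the lace expansion realises the remainder as an unfinished $(M{+}1)$-st term whose diagrammatic bound already factorises as a product of $M{+}1$ small pieces, so $\sum_x|R_M(x)|\le(c_K\beta)^{M+1}$, which drops below $\beta$ once $M$ is taken large (and the cosine version \eqref{eqDiaBoundPerco4} then follows from the same cosine-split trick applied to this product).

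The main obstacle I anticipate is combinatorial rather than analytical: the constant $c_K$ and the number of lines in the diagram eligible to carry the cosine factor grow with $N$, and one has to verify that these multiplicities remain polynomial in $N$ so that geometric summability survives. The necessary case analysis is already carried out in detail in \cite{BorgsChayeHofstSladeSpenc05b}, and the present $\Zd$ setting is strictly simpler than the finite-torus setting treated there (no wrap-around corrections appear), so a transcription of those estimates should suffice; the only genuinely new verification is that dropping the torus hypotheses does not alter any of the intermediate steps, which I expect to be routine.
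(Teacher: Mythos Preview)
Your approach is essentially the same as the paper's: both defer the diagrammatic estimates to \cite[Prop.~5.2]{BorgsChayeHofstSladeSpenc05b} and argue that only minor modifications are needed. Your sketch of how the $f_2$ and $f_3$ bounds feed into the diagrams, and how the cosine split distributes the $[1-\cos(k\cdot x)]$ factor, is accurate.

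There is one point you underestimate. You write that ``the only genuinely new verification is that dropping the torus hypotheses does not alter any of the intermediate steps.'' The paper identifies a second, more important modification: in \cite{BorgsChayeHofstSladeSpenc05b} the bootstrap function $f_3$ is defined with $\hat C_1(k)^{-1}=1-\Dk$ in place of $\Ci=1-\lambda_z\Dk$, so a direct transcription of their estimates yields factors of $[1-\Dk]$ rather than the $\Ci$ required in \eqref{eqDiaBoundPerco2} and \eqref{eqDiaBoundPerco4}. The paper handles this by tracking exactly where the $f_3$ bound is invoked (replacing those factors by $\Ci$ directly) and, for the remaining occurrences of $[1-\Dk]$ that arise from cosine splits on $D$-lines rather than from $f_3$, invoking the elementary bound
\[
0\le 1-\Dk\le 2\,\Ci,
\]
which follows from $0\le\ClK[1-\Dk]=1+\frac{\lambda_z-1}{1-\lambda_z\Dk}\Dk\le2$. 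This costs only an extra constant in $c_K$. You should flag this conversion explicitly rather than fold it into the torus-to-$\Zd$ transcription, since it is independent of that issue.
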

In fact, the bounds in Proposition \ref{propBoundPerc} are not exactly as phrased in \cite{BorgsChayeHofstSladeSpenc05b}.
In the following we explain how the proof of \cite[Prop.\ 5.2]{BorgsChayeHofstSladeSpenc05b} can be modified to obtain Prop.\ \ref{propBoundPerc}.
There are two differences to consider.
First, in the definition of $f_3$ there is a factor 16 in the denominator, whereas we have a factor 200, cf.\ (\ref{eqDefF3}).
This can be controlled easily by changing the factor appropriately throughout the proof of \cite[Prop.\ 5.2]{BorgsChayeHofstSladeSpenc05b}.
This changes the specific value of $c_K$, but the statement of \cite[Prop.\ 5.2]{BorgsChayeHofstSladeSpenc05b} remains unchanged.
The second (and more important) issue is the replacement of $1-\Dk=\hat C_1(k)^{-1}$ in \cite[Prop.\ 5.2]{BorgsChayeHofstSladeSpenc05b} by $1-\lambda_z\Dk=\Ci$ in Prop.\ \ref{propBoundPerc}.
We need to do this replacement to achieve continuity of the function $f_3$.
Wherever the bound on $f_3$ is used in the proof of \cite[Prop.\ 5.2]{BorgsChayeHofstSladeSpenc05b}, which is in \cite[(5.63)]{BorgsChayeHofstSladeSpenc05b}, \cite[(5.77)]{BorgsChayeHofstSladeSpenc05b}, below \cite[(5.93)]{BorgsChayeHofstSladeSpenc05b} and in \cite[(5.97)]{BorgsChayeHofstSladeSpenc05b}, we replace the factor $[1-\Dk]$ by $\Ci$.
Other occurrences of $[1-\Dk]$, as in \cite[(5.75)]{BorgsChayeHofstSladeSpenc05b} and \cite[(5.91)]{BorgsChayeHofstSladeSpenc05b},
can be treated with the bound
\begin{equation}\label{eqDCbound}
    0\le1-\Dk\le2\Ci,\qquad k\in[-\pi,\pi)^d,
\end{equation}
which itself is a consequence of
\begin{equation}\label{eqBootstrap5}
    0\le\ClK\,[1-\Dk]=1+\frac{\lambda_z-1}{1-\lambda_z\Dk}\Dk\le2.
\end{equation}
Again, this increases the value of the constant $c_K$, but leaves the statement of \cite[Prop.\ 5.2]{BorgsChayeHofstSladeSpenc05b} otherwise unchanged.

For a sketch of the argument of how $f(z)\le K$ actually implies (\ref{eqDiaBoundPerco1})--(\ref{eqDiaBoundPerco4}) we refer to \cite[Sect.\ 3.2]{Sakai08}.
In the following we show how Proposition \ref{propBoundPerc} implies Proposition \ref{propBoundPhiPsi} in the percolation case.
\begin{proof}[Proof of Proposition \ref{propBoundPhiPsi} for percolation.]
Recall (\ref{eqDefPhiPerc})--(\ref{eqDefPsiPerc}).
The bounds on $\Psi_z(x)$ in (\ref{eqBoundPhiK})--(\ref{eqBoundPhi}) follow directly from Proposition \ref{propBoundPerc} if $M$ is chosen so large that (\ref{eqDiaBoundPerco3})--(\ref{eqDiaBoundPerco4}) is satisfied.

For the bounds on $\Phi_z(x)=z(D\ast \Pi_\sM)(x)$ we use the estimate
\begin{equation}
    [1-\cos(t_1+t_2)]\le5\left([1-\cos t_1]+[1-\cos t_2]\right), \qquad t_1,t_2\in\R,
\end{equation}
(see \cite[(4.51)]{BorgsChayeHofstSladeSpenc05b}) to obtain
\begin{eqnarray}
  \sum_x[1-\cos(k\cdot x)]\left|\Phi_z(x)\right|
  &\le& 5\sum_xz\sum_y\big([1-\cos(k\cdot y)]+[1-\cos(k\cdot (x-y))]\big)D(y)\left|\Pi_\sM(x-y)\right|\nnb
  &\le& 5z\sum_x[1-\Dk]\left|\Pi_\sM(x-y)\right|\nnb
  &&    +5z\sum_x[1-\cos(k\cdot (x-y))]\,\left|\Pi_\sM(x-y)\right|\nnb
  &\le& 5z\left(2c_K\beta\Ci+c_K\beta\Ci\right).
\end{eqnarray}
by (\ref{eqDiaBoundPerco1})--(\ref{eqDiaBoundPerco2}) and (\ref{eqDCbound}).
\end{proof}

\subsection{The lace expansion for the Ising model}\label{sectLaceExpansionIsing}
The lace expansion for the Ising model has been established recently by Sakai \cite{Sakai07}.
It is similar in spirit to a high-temperature expansion.
A key point is to rewrite the two-point function (spin-spin correlation) using the random-current representation.
This gives rise to a representation involving bonds, in that showing some similarities to a percolation configuration.
The lace expansion is then performed using ideas from the lace expansion for percolation, however, it is considerably more involved.

For the Ising model on a finite graph $\Lambda$, Sakai in \cite[Prop.\ 1.1]{Sakai07} proved the expansion formula
\begin{equation}\label{eqIsingExpansion}
    G_z^{\Lambda}(x)
    =\delta_{0,x}+\tau(z)\left(D\ast G^{\Lambda}_z\right)(x)
    +\tau(z)\left(D\ast\Pi^{\Lambda}_\sM\ast G^{\Lambda}_z\right)(x)
    +\Pi^{\Lambda}_\sM(x)
    +R^{\Lambda}_{\sss M}(x),
\end{equation}
where the $z$-dependence of $\Pi^{\Lambda}_\sM$ and $R^{\Lambda}_\sM$ is omitted from the notation.
Note that $R^{\Lambda}_\sM$ in this paper is $(-1)^{M+1}R^{\sss (M+1)}_{p;\Lambda}$ in \cite{Sakai07}.
Here $M$ refers to the level of the expansion, and $G_z^{\Lambda}$ denotes the finite-volume two-point function.
This is equivalent to (\ref{eqDefGx}) if we let
\begin{equation}\label{eqDefPhiIsing}
    \Phi_z^{\Lambda}(x)=\tau(z)(D\ast\Pi^{\Lambda}_\sM)(x), \qquad x\in\Zd,
\end{equation}
\begin{equation}\label{eqDefPsiIsing}
    \Psi_z^{\Lambda}(x)=\Pi^{\Lambda}_\sM(x)+R^{\Lambda}_{\sss M}(x), \qquad x\in\Zd,
\end{equation}
then choose $M$ so large that (\ref{eqDiaBoundIsing3}) and (\ref{eqDiaBoundIsing4}) below are satisfied for a certain $K$, say $K=4$, and subsequently taking the thermodynamic limit $\Lambda\nearrow\Zd$.
Note that, if comparing (\ref{eqIsingExpansion}) to \cite[(1.11)]{Sakai07}, we explicitly extract the $\delta_{0,x}$-term from the $\Pi$-term in \cite{Sakai07}, i.e., $\Pi^{\sss(M)}_{p;{\Lambda}}(x)$ in \cite{Sakai07} corresponds to $\Pi^\Lambda_\sM(x)+\delta_{0,x}$ in this paper.
For $\Pi^{\Lambda}_\sM$ and $R^{\Lambda}_\sM$ we have the following bounds:

\begin{prop}[Diagrammatic estimates for the Ising model from \cite{Sakai07}]\label{propBoundIsing}
Fix $z\in(0,z_c)$.
If $f(z)$ of (\ref{eqDefF}) obeys $f(z)\le K$, then there are positive constants $c_K$ and $\beta_0=\beta_0(K)$, such that the following holds:
If Assumption \ref{assumptionBeta} holds for some $\beta\le\beta_0$, then for all $M=0,1,2,\dots$,
\begin{equation}\label{eqDiaBoundIsing1}
    \sum_x|\Pi^{\Lambda}_\sM(x)|\le c_K\beta,
\end{equation}
\begin{equation}\label{eqDiaBoundIsing2}
    \sum_x[1-\cos(k\cdot x)]\,|\Pi^{\Lambda}_\sM(x)|\le c_K\beta\Ci,
\end{equation}
and for $M$ sufficiently large (depending on $K$ and $z$),
\begin{equation}\label{eqDiaBoundIsing3}
    \sum_x|R^{\Lambda}_\sM(x)|\le\beta,
\end{equation}
\begin{equation}\label{eqDiaBoundIsing4}
    \sum_x[1-\cos(k\cdot x)]\,|R^{\Lambda}_\sM(x)|\le\beta\Ci.
\end{equation}
These bounds hold uniformly in $\Lambda$.
\end{prop}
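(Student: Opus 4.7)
The plan is to follow the proof of \cite[Prop.\ 3.2]{Sakai07} closely, adapting it to the modified bootstrap hypothesis defined by $f$ in (\ref{eqDefF})--(\ref{eqDefF3}). Sakai works with a bootstrap function whose third component compares the discretized second derivative of $\hat G_z(l)$ against a function built from the random walk Green's function at parameter $z=1$, namely $\hat C_1(k)^{-1} = 1 - \Dk$. Here the corresponding function $f_3$ uses $\hat C_{\lambda_z}$ instead, which is needed later for the continuity of $f_3$ on $[0,z_c)$ (cf.\ Lemma \ref{lemmaContinuity}). Apart from this replacement and a different numerical constant in (\ref{eqDefU}), the structure of Sakai's argument is unchanged, and the adaptation is in spirit the same as the one sketched for percolation just below Proposition \ref{propBoundPerc}.

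The adaptation proceeds in three steps. First, in every line of Sakai's derivation where the bootstrap bound on $f_3$ is invoked, the resulting factor $[1-\Dk]$ is replaced by $\Ci$; this is legitimate because our $f_3$ is defined directly in terms of $\Ul$. Second, at places where $1-\Dk$ appears from other sources---for instance when the inequality $[1-\cos(k\cdot(x+y))] \le 5\{[1-\cos(k\cdot x)] + [1-\cos(k\cdot y)]\}$ is used to split a $1-\cos$ factor across a convolution with $D$---we apply $1-\Dk \le 2\,\Ci$ from (\ref{eqDCbound}) to reconvert the estimate into the $\hat C_{\lambda_z}$ form. Third, the numerical constant $200$ in (\ref{eqDefU}) propagates through each estimate, changing the value of $c_K$ but not affecting the $O(\beta)$ order of the bounds. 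The remainder bounds (\ref{eqDiaBoundIsing3})--(\ref{eqDiaBoundIsing4}) then follow from Sakai's iteration estimate for $R^\Lambda_\sM$, which decays with $M$, upon choosing $M$ sufficiently large. Uniformity in $\Lambda$ is inherited from Sakai's construction, since his finite-volume diagrammatic estimates are uniform in the underlying finite graph.

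The main obstacle is careful bookkeeping: the random-current diagrams underlying the Ising lace expansion in \cite{Sakai07} are considerably more elaborate than the percolation diagrams, since each current generates both a source-connected cluster and a set of odd-degree vertices that must be paired via the switching lemma. Each substitution of $\hat C_1$ by $\hat C_{\lambda_z}$ must therefore be tracked through a long chain of estimates involving current splittings and cluster decompositions, and one must verify at each stage that no spurious powers of $1-\Dk$ appear in the numerator of any intermediate bound. A secondary point is that in order to recover the $\tau(z)$ prefactor in the second bound of (\ref{eqBoundPhi}) via $\Phi_z^\Lambda(x) = \tau(z)(D \ast \Pi^\Lambda_\sM)(x)$ (see (\ref{eqDefPhiIsing})), the $\tau(z)$ factor must be kept explicit in Sakai's estimates rather than being absorbed into the bootstrap constant $K$---analogous to the explicit-$z$ refinement (\ref{eqSawBound2a}) made in the self-avoiding walk case. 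The detailed verification of these modifications constitutes Appendix \ref{appendixBoundsIsing}.
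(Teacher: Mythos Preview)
Your proposal correctly identifies the $\hat C_1 \to \hat C_{\lambda_z}$ substitution and the change of numerical constant, and these are indeed part of the adaptation. However, you have missed the more substantial modification that the paper has to make for the Ising case, and this is a genuine gap.

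Sakai's original bounds in \cite[Prop.~3.2]{Sakai07} are stated for $\sum_x |x|^2\,\pi_\Lambda^{\sN}(x)$, not for $\sum_x [1-\cos(k\cdot x)]\,\pi_\Lambda^{\sN}(x)$. For finite-variance models these are interchangeable via the elementary bound $1-\cos t \le t^2/2$, but in the long-range power-law setting with $\alpha<2$ the second moment of $D$ diverges, and the heuristic after Proposition \ref{propIsingDiagram} in Appendix \ref{appendixBoundsIsing} shows that $\sum_x |x|^2\,\pi_\Lambda^{\sss(0)}(x)$ can be infinite even when $d>2(\alpha\wedge2)$. So the adaptation is \emph{not} analogous to the percolation case you cite below Proposition \ref{propBoundPerc}: there the source \cite{BorgsChayeHofstSladeSpenc05b} already works with $[1-\cos(k\cdot x)]$ weights, and only the $\hat C_1 \to \hat C_{\lambda_z}$ swap is required. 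For the Ising model the paper must redo Sakai's bound on the weighted sum from scratch with the cosine weight.

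Concretely, this means the paper has to supply a new ingredient: Lemma \ref{lemmaCosKxGBound}, which bounds $\sup_x[1-\cos(k\cdot x)]\,G(x)$ by $O(1)\,\Ci$ directly from the hypothesis $f_3(z)\le K$. This lemma is what allows a factor of $\Ci$ to be extracted from a single two-point function carrying the cosine weight inside each diagram. The proof of (\ref{eqIsingDiagramResult2}) then proceeds by using (\ref{eqSumCosBound}) to distribute $1-\cos(k\cdot x)$ along the ``backbone'' of the $\pi^{\sN}$ diagram onto individual line segments, applying Lemma \ref{lemmaCosKxGBound} to one segment, and bounding the remaining pieces by bubble-chain arguments as in Claims \ref{lemmaBoundPprime} and \ref{lemmaBoundPprimeprime}. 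None of this is a mechanical substitution into Sakai's existing inequalities; it is a reworking of the weighted diagrammatic bound itself. Your three-step outline would suffice only if Sakai's proof already controlled $\sum_x[1-\cos(k\cdot x)]\,\pi_\Lambda^{\sN}(x)$, which it does not.
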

Since the bootstrapping hypothesis used in Section \ref{sectAnalysisLE} in this paper is different from that in \cite{Sakai07}, it is not so obvious how Prop.\ \ref{propBoundIsing} follows from the results in \cite{Sakai07}.
In Appendix \ref{appendixBoundsIsing} we explain how the statement in \cite[Prop.\ 3.2]{Sakai07} can be modified to obtain the desired bounds (\ref{eqDiaBoundIsing1})--(\ref{eqDiaBoundIsing4}).

We prove Prop.\ \ref{propBoundPhiPsi} for the Ising model as in the percolation case, now using Prop.\ \ref{propBoundIsing} instead of Prop.\ \ref{propBoundPerc}.
We refrain from repeating the argument.

\section{Analysis of the lace expansion}\label{sectAnalysisLE}

\subsection{The bootstrap argument}
In this section we prove Proposition \ref{theoremLaceExpansion} and, by doing so, complete the proof of Theorem \ref{theoremInfraredBound}.
The proof is based on the following lemma:
\begin{lemma}[The bootstrap / forbidden region argument]\label{lemmaBootstrap}
Let $f$ be a continuous function on the interval $\left[0,z_c\right)$, and assume that $f(0)\le3$. Suppose for each $z\in(0,z_c)$ that if $f(z)\le4$, then in fact $f(z)\le3$. Then $f(z)\le3$ for all $z\in\left[0,z_c\right)$.
\end{lemma}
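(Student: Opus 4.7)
The plan is a standard connectedness argument. The hypothesis can be rephrased as a \emph{forbidden region} statement: on $(0,z_c)$, the function $f$ never takes values in the interval $(3,4]$, because any $z$ with $f(z)\in(3,4]$ would satisfy $f(z)\le 4$ but not $f(z)\le 3$, contradicting the assumption. Together with $f(0)\le 3$, this means the set $\{z\in[0,z_c):f(z)\in(3,4]\}$ is empty.

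Next I would introduce the two sets
\begin{equation*}
  A:=\{z\in[0,z_c):f(z)\le 3\},\qquad
  B:=\{z\in[0,z_c):f(z)>4\},
\end{equation*}
and observe that by the forbidden region property we have $A\cup B=[0,z_c)$ with $A\cap B=\emptyset$. Continuity of $f$ makes $A$ closed in $[0,z_c)$ (as the preimage of the closed set $(-\infty,3]$) and $B$ open in $[0,z_c)$ (as the preimage of $(4,\infty)$). Since $A$ is the complement of $B$ in $[0,z_c)$, it follows that $A$ is also open, hence clopen in the subspace topology of $[0,z_c)$.

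To conclude, I would invoke the fact that the interval $[0,z_c)$ is connected, so its only clopen subsets are $\emptyset$ and $[0,z_c)$ itself. Since $f(0)\le 3$ gives $0\in A$, we have $A\neq\emptyset$, and therefore $A=[0,z_c)$, which is exactly the desired conclusion $f(z)\le 3$ for all $z\in[0,z_c)$.

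There is no real obstacle here; the content of the lemma is simply the observation that a continuous function on a connected interval cannot cross a forbidden gap. The only small point to be careful about is the treatment of the endpoint $z=0$, which is not covered by the ``if $f(z)\le 4$ then $f(z)\le 3$'' hypothesis but is separately handled by the assumption $f(0)\le 3$; this is what guarantees that the clopen set $A$ is nonempty.
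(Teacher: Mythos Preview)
Your proof is correct and is essentially the same idea as the paper's, which simply says ``This is a straightforward application of the intermediate value theorem for continuous functions'' and cites \cite[Lemma~5.9]{Slade06}. Your connectedness/clopen argument is just the topological content of the intermediate value theorem spelled out explicitly, so the approaches coincide.
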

\begin{proof}
This is a straightforward application of the intermediate value theorem for continuous functions, see also \cite[Lemma 5.9]{Slade06}.
\end{proof}
The bootstrap argument in Lemma \ref{lemmaBootstrap} is often used in lace expansion, see e.g.\ \cite[Section 6.1]{MadraSlade93}.
An alternative approach that involves an induction argument has been applied in \cite{HofstSlade02}, see also the lecture notes by van der Hofstad \cite{Hofst05}.

In the remainder of the section, we prove that the function $f$ defined in (\ref{eqDefF}) obeys the prerequisites of Lemma \ref{lemmaBootstrap}.
We therefore have to show that $f(0)\le3$, that $f$ is continuous on $\left[0,z_c\right)$, and that $f(z)\le4$ implies $f(z)\le3$ for $z\in(0,z_c)$. The latter is referred to as the \emph{improvement of the bounds}.

Let us first check that $f(0)\le3$. Clearly, $f_1(0)=0$. Note that $\hat\Psi_0(k)\equiv0$ and $\hat\Phi_0(k)\equiv0$. This leads to  $\hat G_0(k)\equiv1$ and $\lambda_0=0$, hence $f_2(0)=1$ and $f_3(0)=0$.

Next we want to prove continuity of $f$.
To this end, we need the following lemma:
\begin{lemma}[Continuity of equicontinuous functions]\label{lemmaEquicontinuity}
Let $\left(f_\alpha\right)_{\alpha\in A}$ be an equicontinuous family of functions on an interval $[t_1,t_2]$,
i.e., for every given $\eps>0$, there is a $\delta>0$ such that $|f_\alpha(s)-f_\alpha(t)|<\eps$ whenever $|s-t|<\delta$, uniformly in $\alpha\in A$.
Furthermore, suppose that $\sup_{\alpha\in A}f_\alpha(t)<\infty$ for each $t\in[t_1,t_2]$.
Then $t\mapsto\sup_{\alpha\in A}f_\alpha(t)$ is continuous on $[t_1,t_2]$.
\end{lemma}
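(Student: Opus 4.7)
The plan is to set $F(t):=\sup_{\alpha\in A}f_\alpha(t)$ and show directly that $|F(s)-F(t)|\le\eps$ whenever $|s-t|<\delta$, where $\delta$ is the uniform modulus coming from equicontinuity. There is really only one idea: transfer the uniform bound $|f_\alpha(s)-f_\alpha(t)|<\eps$ through the supremum operation on both sides.

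Concretely, I would fix $t\in[t_1,t_2]$ and $\eps>0$, and invoke equicontinuity to obtain $\delta>0$ (independent of $\alpha$) such that $|f_\alpha(s)-f_\alpha(t)|<\eps$ whenever $s\in[t_1,t_2]$ with $|s-t|<\delta$. Then for every $\alpha\in A$,
\[
f_\alpha(s)\le f_\alpha(t)+\eps\le F(t)+\eps,
\]
where the second inequality uses that $F(t)<\infty$ by hypothesis. Taking the supremum over $\alpha$ on the left gives $F(s)\le F(t)+\eps$. Interchanging the roles of $s$ and $t$ yields $F(t)\le F(s)+\eps$, hence $|F(s)-F(t)|\le\eps$. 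Since $\eps>0$ was arbitrary, $F$ is continuous at $t$, and since $t$ was arbitrary, $F$ is continuous on $[t_1,t_2]$.

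There is no real obstacle here. The only subtlety worth noting in the write-up is that the pointwise finiteness hypothesis $\sup_\alpha f_\alpha(t)<\infty$ is used precisely to ensure that the inequalities $f_\alpha(s)\le F(t)+\eps$ and $f_\alpha(t)\le F(s)+\eps$ are meaningful (i.e., that one is not of the form $f_\alpha\le\infty+\eps$ in a way that prevents taking a useful supremum on the left). Apart from this bookkeeping, the argument is a direct and symmetric two-line sandwich.
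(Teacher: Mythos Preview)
Your proof is correct and is exactly the standard argument one expects for this elementary fact. The paper does not give its own proof of this lemma at all; it simply cites \cite[Lemma 5.12]{Slade06}, where the same two-line sandwich argument appears.
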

A proof of this standard result can be found e.g.\ in \cite[Lemma 5.12]{Slade06}.
\begin{lemma}[Continuity]\label{lemmaContinuity}
Assume that, for $z\in(0,z_c)$, $\chi'(z)\le c\chi(z)^2$ for some constant $c$.
Then, the function $f$ defined in (\ref{eqDefF}) is continuous on $(0,z_c)$.
\end{lemma}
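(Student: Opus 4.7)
The plan is to decompose $f = f_1 \vee f_2 \vee f_3$ and verify continuity of each piece separately, using that the maximum of finitely many continuous functions is continuous. The term $f_1(z) = \tau(z)$ is immediately continuous: $\tau(z) = z$ in the self-avoiding walk and percolation cases, while for the Ising model $\tau(z) = \sum_{y \in \Zd}\tanh(zJ(y))$ is continuous by dominated convergence. The nontrivial work is in $f_2$ and $f_3$, where a supremum over $k \in \Td$ (respectively $(k,l) \in \Td \times \Td$) is involved; here I plan to apply Lemma \ref{lemmaEquicontinuity}, so I must check equicontinuity and pointwise boundedness of the respective families.

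The central estimate I would extract from the hypothesis $\chi'(z) \le c\,\chi(z)^2$ is the following. By monotonicity of $z \mapsto G_z(x)$ (valid in all three models), for any $s \ge t$ in a fixed compact subinterval $[t_1,t_2] \subset (0,z_c)$,
\[
    |\hat G_s(k) - \hat G_t(k)| \;\le\; \sum_{x \in \Zd} |G_s(x) - G_t(x)| \;=\; \chi(s) - \chi(t) \;\le\; c\,\chi(t_2)^2\,(s-t),
\]
\emph{uniformly in} $k \in \Td$. Similarly $\lambda_z = 1 - 1/\chi(z)$ is Lipschitz on $[t_1,t_2]$, and $\hat C_{\lambda_z}(k)^{-1} = 1 - \lambda_z \hat D(k)$ is Lipschitz in $z$ with a constant uniform in $k$, while being bounded above by $2$ and below by $1 - \lambda_z \ge 1/\chi(t_2)$.

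With these ingredients, rewriting $\hat G_z(k)/\hat C_{\lambda_z}(k) = \hat G_z(k)\,(1 - \lambda_z \hat D(k))$ and applying a triangle-inequality decomposition shows that the family $\{z \mapsto \hat G_z(k)/\hat C_{\lambda_z}(k) : k \in \Td\}$ is equi-Lipschitz on $[t_1,t_2]$ and pointwise bounded by $2\chi(t_2)$. Lemma \ref{lemmaEquicontinuity} then yields continuity of $f_2$ on $[t_1,t_2]$, hence on $(0,z_c)$ by taking an exhaustion. For $f_3$, the same approach applies to the numerator $|\Delta_k \hat G_z(l)| = 2\bigl|\sum_x G_z(x)[1-\cos(k\cdot x)]\,\e^{il \cdot x}\bigr| \le 4\chi(z)$, which is equi-Lipschitz in $z$ uniformly in $(k,l)$ by the same monotonicity trick, and to the denominator $U_{\lambda_z}(k,l)$, which is a polynomial expression in $\hat C_{\lambda_z}$ evaluated at three shifted arguments, bounded below by a positive multiple of $\chi(z)^{-1}$ uniformly in $(k,l)$.

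The anticipated main obstacle is precisely the \emph{uniformity in $k$ and $(k,l)$} of the Lipschitz estimates, which is not obvious a priori; the monotonicity $G_s(x) \ge G_t(x)$ combined with the mean-field bound $\chi'(z) \le c\,\chi(z)^2$ is exactly what allows the full $k$-dependence of $\hat G_s(k) - \hat G_t(k)$ to be dominated by the single scalar $\chi(s) - \chi(t)$, and this is the heart of the argument. The remaining step, continuity of $f$ itself, follows because the pointwise maximum of three continuous functions is continuous.
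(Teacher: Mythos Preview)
Your proposal is correct and follows essentially the same route as the paper: both establish continuity of $f_2$ and $f_3$ by showing the relevant families are equi-Lipschitz on compact subintervals of $(0,z_c)$ and then invoking Lemma~\ref{lemmaEquicontinuity}, with the key input being that monotonicity of $z\mapsto G_z(x)$ lets one dominate $|\hat G_s(k)-\hat G_t(k)|$ by the scalar $\chi(s)-\chi(t)$, which the mean-field bound controls. The only cosmetic difference is that the paper phrases this in terms of uniformly bounding the $z$-derivative of each ratio (via the quotient rule and the infinitesimal version $\bigl|\tfrac{d}{dz}\hat G_z(k)\bigr|\le\chi'(z)$), whereas you work directly with increments.
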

\begin{proof} It is sufficient to show that $f_1$, $f_2$ and $f_3$ are continuous.
The continuity of $f_1$ is obvious.
We show that $f_2$ and $f_3$ are continuous on the closed interval $[0,z_c-\eps]$ for any $\eps>0$ by taking derivatives with respect to $z$ and bound it uniformly in $k$ on $[0,z_c-\eps]$.

We do $f_2$ first. To this end, we consider the derivative
\begin{equation}\label{eqContinuity1}
    \frac{\d}{\d z}\frac{\Gk}{\ClK}=\frac{1}{\ClK^2}\left[\ClK\,\frac{\d\Gk}{\d z}-\Gk\,\frac{\d \hat C_{\lambda}(k)}{\d\lambda}\bigg|_{\lambda=\lambda_z}\frac{\d\lambda_z}{\d z}\right].
\end{equation}
We proceed by showing that each of the terms on the right hand side is uniformly bounded in $k$ and $z\in[0,z_c-\eps]$, and hence the derivative is bounded.
First we recall the definition of $\lambda_z$ in (\ref{eqDefLambdaZ}) to see that
\begin{equation}\label{eqContinuity2}
    \frac{1}{2}\le\frac{1}{1-\lambda_z\Dk}=\ClK\le\ClN=\chi(z).
\end{equation}
Furthermore, $\chi(z)\le\chi(z_c-\eps)$, and the latter is finite by the definition of $z_c$ in (\ref{eqDefZc}).
For every $k\in[-\pi,\pi)^d$, the two-point function is bounded from above by
\begin{equation}
|\Gk|\le|\GN|=\chi(z)\le\chi(z_c-\eps),
\end{equation}
For the derivative of the two-point function, we bound
\begin{equation}\label{eqContinuity3}
    \left|\frac\d{\d z}\Gk\right|
    =\left|\sum_x \e^{ik\cdot x} \frac\d{\d z}G_z(x)\right|
    \le \sum_x \frac\d{\d z}G_z(x)
    =\frac\d{\d z}\sum_x G_z(x)
    =\chi'(z),
\end{equation}
where the exchange in the order of sum and derivative is validated by the fact that both $\sum_x \e^{ik\cdot x} G_z(x)$ and $\sum_x G_z(x)$ are uniformly convergent series of functions.
By the assumed mean-field bound $\chi'(z)\le c\chi(z)^2$, (\ref{eqContinuity3}) is bounded above by $c\chi(z_c-\eps)^2$.

Moreover, we obtain from (\ref{eqCz3}) that
$|\d\hat C_{\lambda}(k)/\d\lambda|\le \hat C_{\lambda}(k)^2$, and, for $\lambda=\lambda_z$, this is in turn bounded by $\chi(z_c-\eps)^2$, cf.\ (\ref{eqContinuity2}).
Finally,
$|\d\lambda_z/\d z|=\chi'(z)/\chi(z)^2\le c$
by (\ref{eqDefLambdaZ}) and our assumption.

We treat $f_3$ in exactly the same way as $f_2$, and omit the details here.
\end{proof}

\subsection{Improvement of the bounds}

The following lemma covers the remaining prerequisite of Lemma \ref{lemmaBootstrap} and thus proves the final ingredient needed for the proof of Proposition \ref{theoremLaceExpansion}.

\begin{lemma}[Improvement of the bounds]\label{lemmaImprovedBounds}
If the assumptions of Proposition \ref{theoremLaceExpansion} are satisfied for some sufficiently small $\beta$,
and if $f(z)\le4$, then there exists a constant $c>0$ such that $f(z)\le1+c\beta$ for all $z\in(0,z_c)$.
In particular, if $\beta$ is small enough, then $f(z)\le3$.
\end{lemma}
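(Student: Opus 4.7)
The hypothesis $f(z)\le 4$ activates Assumption \ref{assumptionDiagrammaticBounds} with $K=4$, so the lace expansion coefficients satisfy (\ref{eqBoundPhiK})--(\ref{eqBoundPhi}) with some constant $c_4$, and (\ref{eqDefG}) provides the Fourier representation of $\Gk$. The plan is to improve the bounds on $f_1$, $f_2$, and $f_3$ in turn. For $f_1$, evaluating (\ref{eqDefG}) at $k=0$ gives $\chi(z)=\GN=(1+\PsiN)/m_z$; since $\chi(z)$ is finite and strictly positive on $(0,z_c)$ and $1+\PsiN=1+O(\beta)>0$, we must have $m_z>0$, whence $\tau(z)<1-\PhiN\le 1+c_4\beta$.

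For $f_2$, I essentially repeat the computation (\ref{eqMainProof5})--(\ref{eqMainProof10}). Writing the denominator of $\Gk$ as $m_z+\tau(z)[1-\Dk]+(\PhiN-\PhiK)$, the second display in (\ref{eqBoundPhi}) bounds $|\PhiN-\PhiK|\le\tau(z)c_4\beta\,\Ci$. Combining this with $m_z=(1+O(\beta))\chi(z)^{-1}$ and the identity $\Ci=[1-\Dk]+\chi(z)^{-1}\Dk$, the target inequality $\Gk\le(1+O(\beta))\ClK$ reduces to $(1-\Dk)\big[(1-\tau(z))-\chi(z)^{-1}\big]\le 0$, which holds by the mean-field bound $1-\tau(z)\le 1/\chi(z)$ implied by (\ref{eqSingleStepBound}).

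The $f_3$ improvement is the principal difficulty and follows the trigonometric strategy of \cite{BorgsChayeHofstSladeSpenc05b}. Writing $\Gk=(1+\PsiK)/F_z(k)$ with $F_z(k)=1-\tau(z)\Dk-\PhiK$, the elementary algebraic identity
\begin{equation*}
\frac{a_-}{b_-}+\frac{a_+}{b_+}-\frac{2a_0}{b_0}=\frac{\Delta_k a}{b_0}-\frac{a_0\Delta_k b}{b_-b_+}-\frac{(a_+-a_0)(b_+-b_0)}{b_+b_0}-\frac{(a_--a_0)(b_--b_0)}{b_-b_0},
\end{equation*}
applied with $a$ and $b$ evaluated at $l-k,l,l+k$, decomposes $\Delta_k\hat G_z(l)$ as a linear combination of terms each containing $\Delta_k\hat\Psi_z(l)$, $\Delta_k F_z(l)$, or a product of two one-sided differences, divided by a product of two $F_z(l\pm k)$'s. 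Every such difference is controlled via the trigonometric identity
\begin{equation*}
2\cos(l\cdot x)-\cos((l-k)\cdot x)-\cos((l+k)\cdot x)=2\cos(l\cdot x)\,[1-\cos(k\cdot x)],
\end{equation*}
which reduces the estimate to the $(1-\cos(k\cdot x))$-weighted bounds in (\ref{eqBoundPhi}); for instance $|\Delta_k\hat\Phi_z(l)|\le 2\tau(z)c_4\beta\,\Ci$ and, via (\ref{eqDCbound}), $|\Delta_k\hat D(l)|\le 2[1-\Dk]\le 4\Ci$. The denominators $F_z(l\pm k)^{-1}$ are handled through the already improved $f_2$ bound, yielding $F_z(l\pm k)^{-1}\le(1+O(\beta))\hat C_{\lambda_z}(l\pm k)$. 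Symmetrising in the three points $l-k,l,l+k$ reproduces exactly the three-term structure inside (\ref{eqDefU}) and produces $|\Delta_k\hat G_z(l)|\le(1+O(\beta))\Ul$, with the generous prefactor $200$ in the definition of $\Ul$ absorbing the combinatorial constants.

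Combining the three improvements gives $f(z)\le 1+c\beta$ for some $c$ depending only on $c_4$, which is at most $3$ provided $\beta$ is sufficiently small. The main obstacle is the bookkeeping in the $f_3$ step: one has to check carefully that the many terms generated by the algebraic decomposition recombine into the precise symmetric three-term denominator structure of $\Ul$, with constants that the factor $200$ can swallow.
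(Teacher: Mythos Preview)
Your $f_1$ and $f_2$ improvements are sound in outline. For $f_2$ you take a slightly different route than the paper: rather than using the bootstrap hypothesis $\Gk\le 4\ClK$ to close the identity (\ref{eqBootstrap9}), you appeal to the mean-field inequality $1-\tau(z)\le\chi(z)^{-1}$ from (\ref{eqSingleStepBound}); with careful handling of the $O(\beta)$ errors this can be made rigorous.

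The $f_3$ step, however, has a genuine gap. Your ``elementary algebraic identity'' is false: take $a_-=a_0=a_+=1$, $b_-=b_+=1$, $b_0=2$, and the two sides equal $1$ and $2$. The correct decomposition (this is (\ref{eqNewTrigo1})) has $a_0\,\Delta_k(1/b)$ in place of your $-a_0\Delta_k b/(b_-b_+)$, and one computes
\[
\Delta_k\frac{1}{b}=-\frac{\Delta_k b}{b_-b_+}-\frac{2(b_--b_0)(b_+-b_0)}{b_-\,b_0\,b_+}.
\]
The second fraction is precisely what you are missing, and it is the heart of the matter. Bounding $|b_\sigma-b_0|\le O(1)\,\ClK^{-1/2}$ via Cauchy--Schwarz with weight $1-\cos(k\cdot x)$ (as in (\ref{eqNewTrigo8})) yields only
\[
\Big|\frac{2a_0(b_--b_0)(b_+-b_0)}{b_-\,b_0\,b_+}\Big|\le O(1)\,\Ci\,\Cl(l-k)\,\Cl(l)\,\Cl(l+k),
\]
which is \emph{not} dominated by $\Ul$ uniformly in $l$ and $z$: the ratio is of order $\Cl(l)$, and $\Cl(l)\to\infty$ as $l\to0$, $z\nearrow z_c$. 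The paper closes this by invoking Lemma~\ref{lemmaTrigo}, whose second line carries the factor $\widehat{|a|}(0)-\widehat{|a|}(l)$; since here $|a(x)|\le\tau(z)D(x)+|\Phi_z(x)|$, that factor is bounded by $5\,\Cl(l)^{-1}$ and kills the surplus $\Cl(l)$. This estimate is the substance of Lemma~\ref{lemmaTrigo} (proved in \cite[Lemma~5.7]{Slade06}) and cannot be recovered by ``symmetrising in $l-k,l,l+k$'' or by bookkeeping alone.
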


The following lemma will help us for the improvement of the bound on $f_3$.
\begin{lemma}[Slade \cite{Slade06}]\label{lemmaTrigo}
Suppose that $a(x)=a(-x)$ for all $x\in\Zd$, and let
\begin{equation}\label{eqTrigo0}
    \hat A(k)=\frac{1}{1-\hat a(k)}.
\end{equation}
Then, for all $k,l\in[-\pi,\pi)^d$,
\begin{eqnarray}
  \label{eqTrigo1}
  \left|\Delta_k \hat A(l)\right|
  &\le& \left(\hat A(l-k)+\hat A(l+k)\right)\hat A(l)\,\left(\widehat{|a|}(0)-\widehat{|a|}(k)\right) \\
  \nonumber
  && {}+8\hat A(l-k)\,\hat A(l)\,\hat A(l+k) \,\left(\widehat{|a|}(0)-\widehat{|a|}(l)\right) \,\left(\widehat{|a|}(0)-\widehat{|a|}(k)\right).
\end{eqnarray}
\end{lemma}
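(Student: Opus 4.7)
The plan is to express $\Delta_k\hat A(l)$ as a linear combination of a symmetric (``second $k$-difference'') and an antisymmetric (``first $k$-difference'') part of $\hat a$, and then to bound each factor using elementary trigonometric identities together with Cauchy--Schwarz. Write $A_\pm=\hat A(l\pm k)$, $A_0=\hat A(l)$, and $a_\pm=\hat a(l\pm k)$, $a_0=\hat a(l)$.

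First I would use the resolvent identity $\hat A(u)-\hat A(v)=\hat A(u)\hat A(v)(\hat a(u)-\hat a(v))$ to rewrite
\begin{equation*}
\Delta_k\hat A(l)=\bigl[\hat A(l-k)-\hat A(l)\bigr]+\bigl[\hat A(l+k)-\hat A(l)\bigr]=A_0\bigl[A_-(a_--a_0)+A_+(a_+-a_0)\bigr].
\end{equation*}
A direct algebraic check establishes the symmetric/antisymmetric decomposition
\begin{equation*}
A_-(a_--a_0)+A_+(a_+-a_0)=\tfrac{A_-+A_+}{2}\bigl(a_-+a_+-2a_0\bigr)+\tfrac{A_--A_+}{2}(a_--a_+),
\end{equation*}
which is the key maneuver behind the whole argument.

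Next I would estimate each scalar factor separately. For the symmetric factor, the identity
\begin{equation*}
\cos((l-k)\cdot x)+\cos((l+k)\cdot x)-2\cos(l\cdot x)=-2\cos(l\cdot x)\bigl(1-\cos(k\cdot x)\bigr)
\end{equation*}
combined with $|\cos(l\cdot x)|\le1$ and $a(x)=a(-x)$ (so $\hat a$ is a cosine series) gives $|a_-+a_+-2a_0|\le 2(\widehat{|a|}(0)-\widehat{|a|}(k))$. For the antisymmetric factor, the identity
\begin{equation*}
\cos((l-k)\cdot x)-\cos((l+k)\cdot x)=2\sin(l\cdot x)\sin(k\cdot x),
\end{equation*}
the elementary bound $|\sin\theta|\le\sqrt{2(1-\cos\theta)}$, and Cauchy--Schwarz in $\ell^2(|a|\,dx)$ yield
\begin{equation*}
|a_--a_+|\le 4\sqrt{\bigl(\widehat{|a|}(0)-\widehat{|a|}(l)\bigr)\bigl(\widehat{|a|}(0)-\widehat{|a|}(k)\bigr)}.
\end{equation*}
Finally, the resolvent identity gives $A_--A_+=A_-A_+(a_--a_+)$, so $|A_--A_+|\le A_-A_+|a_--a_+|$.

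Assembling the estimates, the symmetric piece contributes at most $A_0(A_-+A_+)(\widehat{|a|}(0)-\widehat{|a|}(k))$, and the antisymmetric piece contributes at most $A_0 A_- A_+\cdot\tfrac{1}{2}|a_--a_+|^2\le 8\,A_0 A_-A_+\,(\widehat{|a|}(0)-\widehat{|a|}(l))(\widehat{|a|}(0)-\widehat{|a|}(k))$, which together give \eqref{eqTrigo1}. The only nontrivial step is recognizing the symmetric/antisymmetric split; once it is in place, the remainder is a routine trigonometric computation and Cauchy--Schwarz, so I do not anticipate any serious obstacle.
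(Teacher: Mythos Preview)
Your proof is correct and is precisely the standard argument that the paper defers to: the paper does not reproduce the proof but cites \cite[Lemma~5.7]{Slade06}, and the symmetric/antisymmetric split of $A_-(a_--a_0)+A_+(a_+-a_0)$ together with the trigonometric identities and Cauchy--Schwarz you describe is exactly Slade's proof. One small remark: the final inequality as stated has $\hat A(l\pm k)$ and $\hat A(l)$ without absolute values on the right-hand side, so implicitly one uses that these quantities are nonnegative (which holds in the application, where $\hat A$ is compared to $\hat C_{\lambda_z}\ge\tfrac12$); your antisymmetric step $A_0\tfrac{A_--A_+}{2}(a_--a_+)=\tfrac12 A_0A_-A_+(a_--a_+)^2$ makes this transparent.
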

By $\widehat{|a|}$ we denote the Fourier transform of the absolute value of $a$.
The proof of Lemma \ref{lemmaTrigo} uses several bounds on trigonometric quantities, and can be found in \cite[Lemma 5.7]{Slade06}.

\begin{proof}[Proof of Lemma \ref{lemmaImprovedBounds}]
Fix $z\in(0,z_c)$ arbitrarily and assume $f(z)\le4$. Our general strategy will be to show that $f_i$ for $i=1,2,3$ is smaller then $(1+\const\beta)$ and thus, by taking $\beta$ small, $f(z)\le3$.

The bound on $f_1$ is easy.
First note that $\lambda_z=1-\chi(z)^{-1}\le1$.
Using (\ref{eqDefLambdaZ}) along with (\ref{eqLowerCritBound})--(\ref{eqMainProof6}) and Proposition \ref{propBoundPhiPsi} (with $K=4$) we obtain
\begin{eqnarray}
    \nonumber
    f_1(z)&=&\lambda_z\left(1+\PsiN\right)-\PhiN-\PsiN\\
    \nonumber
    &\le&\lambda_z\left(1+|\PsiN|\right)+|\PhiN|+|\PsiN|\\
    \label{eqImprovedF1}
    &\le&1+3\,c_4\beta.
\end{eqnarray}

The bound on $f_2$ is slightly more involved. We write $\G_z=\Nhat/\Fhat$, with
\begin{equation}\label{eqDefNF}
    \Nhat(k)=\frac{1+\PsiK}{1+\PsiN},\qquad\Fhat(k)=\frac{1-\tau(z)\Dk-\PhiK}{1+\PsiN}.
\end{equation}
Recall from (\ref{eqCz3}) that $\ClK=[1-\lambda_z\Dk]^{-1}$ and, by (\ref{eqDefG}) and (\ref{eqDefLambdaZ}),
\begin{equation}\label{eqDefNF2}
    \lambda_z=1-\frac{1-\tau(z)-\PhiN}{1+\PsiN}.
\end{equation}
This yields
\begin{eqnarray}\label{eqBootstrap9}
  \frac{\Gk}{\ClK}
  &=& \Nhat(k)+\Gk\left[1-\lambda_z\Dk-\Fhat(k)\right],
\end{eqnarray}
where
\begin{equation*}
    1-\lambda_z\Dk-\Fhat(k)=\frac{[1-\Dk]\PsiN+[\PhiK-\PhiN]\Dk+[1-\Dk]\PhiK}{1+\PsiN}.
\end{equation*}
By taking $c_4\beta\le1/2$, we obtain the bound
\begin{equation}
\frac{1+\ell c_4\beta}{1-c_4\beta}\le1+(2\ell+2)\,c_4\beta,
\qquad\ell=0,1,2,\dots,
\end{equation}
which we use frequently below.
For example, together with Assumption \ref{assumptionDiagrammaticBounds}, it enables us to bound
\begin{equation*}
    \left|\hat N(k)\right|
    =\left|\frac{1+\PsiK}{1+\PsiN}\right|
    \le\frac{1+|\PsiK|}{1-|\PsiN|}
    \le 1+4\,c_4\beta.
\end{equation*}
Together with (\ref{eqDCbound}) we obtain in the same fashion that
\begin{eqnarray*}
    \left|1-\lambda_z\Dk-\Fhat(k)\right|
    &\le&\frac{[1-\Dk]\,|\PsiN|+|\PhiK-\PhiN|+[1-\Dk]\,|\PhiK|}{1-|\PsiN|} \\
    &\le&\frac{2c_4\beta[1-\Dk]+c_4\beta\Ci}{1-c_4\beta}
    \le12\,c_4\beta\,\Ci
\end{eqnarray*}
By our assumption that $\Gk\le4\ClK$ (which follows from $f(z)\le4$) and the above inequalities,
we can bound  (\ref{eqBootstrap9}) from above by
\begin{eqnarray}\label{eqImprovedF2}
  \left|\frac{\Gk}{\ClK}\right|
  &\le& 1+4\,c_4\beta+4\cdot12\,c_4\beta\left|\ClK\,\Ci\right|
  = 1+52\,c_4\beta.
\end{eqnarray}
for every $k\in\Td$.
This proves the bound on $f_2$.

%
%
\vspace{.5em}
It remains to show the bound on $f_3$.
In the following, we write $K$ for a positive constant, whose value may change from line to line.
Furthermore, we write
\begin{equation}\label{eqNewTrigo2}
    \Gk=\frac{\hat b(k)}{1-\hat a(k)},
    \qquad\text{where \hspace{.5em}$\hat b(k)=1+\PsiK$, \hspace{.5em}$\hat a(k)=\tau(z)\Dk+\PhiK$.}
\end{equation}
A straightforward calculation (see also \cite[(4.18)]{ChenSakai05}) shows that
\begin{equation}\label{eqNewTrigo1}
    \Delta_k\hat G_z(l)
    =\frac{\Delta_k\,\hat b(l)}{1-\hat a(l)}
     +\sum_{\sigma\in\{1,-1\}}\frac{\big(\hat a(l+\sigma k)-\hat a(l)\big)\,\big(\hat b(l+\sigma k)-\hat b(l)\big)}{\left(1-\hat a(l)\right)\left(1-\hat a(l+\sigma k)\right)}
     +\hat b(l)\,\Delta_k\!\left[\frac{1}{1-\hat a(l)}\right].
\end{equation}
We now bound all three summands in (\ref{eqNewTrigo1}), and start with the first one:
\begin{equation}\label{eqNewTrigo3}
    \left|\frac{\Delta_k\,\hat b(l)}{1-\hat a(l)}\right|
    =\left|\frac{\Delta_k\,\hat b(l)}{\hat b(l)}\right|\left|\hat G_z(l)\right|
    =\left|\frac{\Delta_k\,\hat \Psi_z(l)}{1+\hat \Psi_z(l)}\right|\left|\hat G_z(l)\right|
    \le\left|\Delta_k\,\hat \Psi_z(l)\right| 2(1+K\beta)\,\Cl(l),
\end{equation}
where the last bound uses (\ref{eqBoundPhi}) to bound the denominator, and (\ref{eqImprovedF2}).
A basic calculation shows that any function $g\colon\Zd\to\R$ with $g(x)=g(-x)$ satisfies
\begin{equation}\label{eqNewTrigo4}
    \left|\Delta_k\hat g(l)\right|\le\sum_x[1-\cos(k\cdot x)]\left|g(x)\right|,
\end{equation}
cf.\ \cite[(5.32)]{BorgsChayeHofstSladeSpenc05b}.
We apply this bound with $g(x)=\Psi_z(x)$, combine it with (\ref{eqNewTrigo3}) and (\ref{eqBoundPhi}),
and use $\hat C_{\lambda_z}(l\pm k)\ge 1/2$ and the definition of $U_{\lambda_z}(l,k)$ in (\ref{eqDefU}) to obtain
\begin{equation}\label{eqNewTrigo5}
    \left|\frac{\Delta_k\,\hat b(l)}{1-\hat a(l)}\right|
    \le K\beta\,\Ci\hat C_{\lambda_z}(l)
    \le O(\beta) \,U_{\lambda_z}(l,k).
\end{equation}

The second term in (\ref{eqNewTrigo1}) is bounded as follows.
First, since
\begin{equation}
|\e^{il\cdot x}(\e^{i(\pm k\cdot x)}-1)|\le|\sin(k\cdot x)|+1-\cos(k\cdot x),
\end{equation}
we obtain
\begin{equation}\label{eqNewTrigo6}
    \big|\hat b(l\pm k)-\hat b(l)\big|
    =\big|\hat\Psi_z(l\pm k)-\hat\Psi_z(l)\big|
    \le \sum_x|\sin(k\cdot x)|\,\big|\Psi_z(x)\big|
       +\sum_x[1-\cos(k\cdot x)]\,\big|\Psi_z(x)\big|.
\end{equation}
The second term on the right hand side of (\ref{eqNewTrigo6}) is bounded by $O(\beta)\,\Ci$;
on the first term we apply the Cauchy-Schwarz inequality and (\ref{eqBoundPhiK})--(\ref{eqBoundPhi}):
\begin{eqnarray}
    \nonumber
    \sum_x|\sin(k\cdot x)|\,\big|\Psi_z(x)\big|
    &\le& \Big(\sum_{x\neq0}|\Psi_z(x)|\Big)^{1/2}
          \Big(\sum_{x\neq0}\sin(k\cdot x)^2\,|\Psi_z(x)|\Big)\!^{1/2} \\
    \nonumber
    &\le& O(\beta)^{1/2}\,\Big(\sum_{x\neq0}[1-\cos(k\cdot x)]\,|\Psi_z(x)|\Big)^{1/2} \\
    \label{eqNewTrigo7}
    &\le& O(\beta)\,\Cl(k)^{-1/2}.
\end{eqnarray}
Furthermore,
\begin{equation}
    \hat a(l\pm k)-\hat a(l)
    =\tau(z)\left(\hat D(l\pm k)-\hat D(l)\right)+\left(\hat\Phi_z(l\pm k)-\hat\Phi_z(l)\right).
\end{equation}
In a similar fashion as (\ref{eqNewTrigo6})--(\ref{eqNewTrigo7}),
we bound $\left|\hat\Phi_z(l\pm k)-\hat\Phi_z(l)\right|\le O(\beta)\,\ClK^{-1/2}$ and
\begin{eqnarray}
  \nonumber
  \left|\hat D(l\pm k)-\hat D(l)\right|
  &\le& \Big(\sum_xD(x)\Big)^{1/2}\Big(\sum_x[1-\cos(k\cdot x)]\,D(x)\Big)^{1/2}+\sum_x[1-\cos(k\cdot x)]\,D(x) \\
  \nonumber
  &=& 1\cdot [1-\Dk]^{1/2} +[1-\Dk]\\
  \label{eqNewTrigo8}
  &\le&  2\ClK^{-1/2} +2\Ci
  \le O(1)\,\ClK^{-1/2},
\end{eqnarray}
where the last line uses (\ref{eqDCbound}).
The combination of (\ref{eqNewTrigo6})--(\ref{eqNewTrigo8}) and (\ref{eqImprovedF1}) yields
\begin{equation}\label{eqNewTrigo9}
    \big(\hat a(l\pm k)-\hat a(l)\big)\,\big(\hat b(l\pm k)-\hat b(l)\big)
    \le O(\beta)\,\Ci.
\end{equation}
On the other hand, by (\ref{eqImprovedF2})--(\ref{eqNewTrigo2}),
\begin{equation}\label{eqNewTrigo10}
    \frac{1}{1-\hat a(l+\sigma k)}
    =\frac{1}{\hat b(l+\sigma k)}\hat G(l+\sigma k)
    \le (1+O(\beta))\,\Cl(l+\sigma k),
    \quad \sigma\in\{-1,0,1\}.
\end{equation}
Combining (\ref{eqNewTrigo9}) and (\ref{eqNewTrigo10}) yields
\begin{equation}\label{eqNewTrigo11}
    \frac{\big(\hat a(l\pm k)-\hat a(l)\big)\,\big(\hat b(l\pm k)-\hat b(l)\big)}{\left(1-\hat a(l)\right)\left(1-\hat a(l\pm k)\right)}
    \le O(\beta)\,\Ci\;\Cl(l)\,\Cl(l\pm k)
    \le O(\beta)\, U_{\lambda_z}(l,k).
\end{equation}

For the third term in (\ref{eqNewTrigo1}) we argue that
$|\hat b(l)|=1+|\hat\Psi_z(l)|\le1+c_4\beta$ by our assumption on $\hat\Psi_z$.
In order to apply Lemma \ref{lemmaTrigo} to bound $\Delta_k(1-\hat a(l))^{-1}$, we estimate
\begin{equation}
    \hat A(l):=\frac{1}{1-\hat a(l)}=\frac{1}{\hat b(l)}\,\hat G_z(l)
    \le(1+2c_4\beta)\,(1+51c_4\beta)\,\Cl(l)
    \le(1+K\beta)\Cl(l)
\end{equation}
by Assumption \ref{assumptionDiagrammaticBounds} and (\ref{eqImprovedF2}),
and
\begin{eqnarray*}
  \widehat{|a|}(0)-\widehat{|a|}(k)
  &=& \sum_x\,[1-\cos(k\cdot x)] \,\big|\tau(z)D(x)+\Phi_z(x)\big|\\
  &\le& \tau(z)[1-\Dk]+\sum_x\,[1-\cos(k\cdot x)]\,\big|\Phi_z(x)\big|\\
  &\le& \left(2(1+c_4\beta)+c_4\beta\right)\Ci
  \le \, 5\,\Ci,
\end{eqnarray*}
where the last line uses again (\ref{eqDCbound}) and, as usual, requires a certain smallness of $\beta$ (here we need $c_4\beta\le1$).
Plugging these estimates into (\ref{eqTrigo1}) yields
\begin{equation}
    \Delta_k\frac{1}{1-\hat a(l)}
    \le (1+K\beta)^3\cdot 8\cdot 5^2\cdot
    \Ci\left\{\hat{C}_{\lambda_z}(l-k)\hat{C}_{\lambda_z}(l)+\hat{C}_{\lambda_z}(l)\hat{C}_{\lambda_z}(l+k)+\hat{C}_{\lambda_z}(l-k)\hat{C}_{\lambda_z}(l+k)\right\},
\end{equation}
so that finally
\begin{equation}
    \frac{|\Delta_k\hat G_z(l)|}{\Ul}
    \le (1+K\beta),
\end{equation}
as required.
In conclusion $f_3(z)\le 1+K\beta$, and thus we obtain the improved bound $f(z)\le 1+O(\beta)$.
\end{proof}

\begin{proof}[Proof of Proposition \ref{theoremLaceExpansion}]
Note first that $f$ is continuous on $(0,z_c)$ by Lemma \ref{lemmaContinuity} and the assumed mean-field bound $\chi(z)'\le\const\chi(z)^2$.
Whence the prerequisites of Lemma \ref{lemmaBootstrap} are satisfied by Lemma \ref{lemmaImprovedBounds} and the fact that $f(0)=1$.
Therefore, $f(z)\le3$ for all $z<z_c$.
Moreover, Lemma \ref{lemmaImprovedBounds} shows that, if $f\le4$, then in fact $f\le1+O(\beta)$.
Hence $f(z)\le1+O(\beta)$, uniformly for $z<z_c$.
\end{proof}

\appendix

\section{Derivation of critical exponents for percolation}\label{appendixCritExp}
\subsection{Derivation of $\gp=1$}
Aizenman and Newman \cite{AizenNewma84} prove that the triangle condition $T(z_c)<
\infty$ implies that the critical exponent $\gp$ for percolation exists, and satisfies $\gp=1$.
That is to say, they show $\chi(z)\asymp (z_c-z)^{-1}$ as $z\nearrow z_c$.
The lower bound $\gp\ge1$ in \cite[Prop.\ 3.1]{AizenNewma84} holds for any homogeneous bond percolation model.
On the other hand, the upper bound $\gp\le1$ is stated in \cite[Prop.\ 3.1]{AizenNewma84} for the nearest neighbor model only.
The aim of this section is to show how the derivation in \cite{AizenNewma84} can be extended to long range systems.

The argument requires a finite volume and range approximation in order to apply Russo's formula.
We denote by
$$\torus_r:=[-r,r]^d\cap\Zd$$
a cube of sidelength $2r+1$.
In order to achieve translation invariance, we equip the cube with periodic boundary conditions, that is, $\torus_r$ is a torus.
In \cite{AizenNewma84} free boundary conditions were used.
We write $\GrR(x,y)$ 
for the probability that the points $x$ and $y$ are connected on the torus using only bonds $\{u,v\}$ of length $|u-v|\le R$.
For $r>R$ (which we always assume), this is equivalent to removing all bonds from $\torus_r$ with length larger than $R$.
Define accordingly the \emph{restricted expected cluster size} by
\begin{equation}    \label{eqPercGamma2}
    \chiRR(z):=\sum_{x\in \torus_r}\GrR(0,x),
\end{equation}
and the \emph{restricted triangle diagram} by
\begin{equation}    \label{eqPercGamma2a}
    \nabla_{\torus_r}^{\sss (R)}(z)
    :=\sum_{\substack{v,s,t\in{\torus_r}\\|v|\le R}}D(v)\,\GrR(v,s)\,\GrR(s,t)\,\GrR(t,0).
\end{equation}

We proceed as follows.
We fix $\eps>0$ small, and first show that for $z<z_c-\eps$,
\begin{equation}\label{eqPercGamma3}
    {\left(1-\nabla_{\torus_r}^{\sss(R)}(z_c-\eps)-e_{\sss R}\right)}\;{(z_c-z-\eps)}\le\frac{1}{\chiRR(z)}-\frac{1}{\chiRR(z_c-\eps)}\le{(z_c-z-\eps)}
\end{equation}
holds uniformly in $r$ and $R$, where $e_{\sss R}=o(1)$ as $R\to\infty$.
We argue that indeed, for $z< z_c-\eps$,
\begin{equation}\label{eqHelp1}
    \lim_{R\to\infty}\lim_{r\to\infty}\chiRR(z)=\chi(z),
\end{equation}
and, for every $R>0$,
\begin{equation}\label{eqHelp1a}
    \nabla_{\torus_r}^{\sss (R)}(z_c-\eps)
    \le
    \nabla(z_c-\eps)+o(1)
    \qquad\text{as $r\ua\infty$,}
\end{equation}
where $\nabla(z)=(D\ast G_z\ast G_z\ast G_z)(0)$. Note that $\nabla(z)$ differs from $T(z)$ by the extra displacement $D$.
Then, taking $r\to\infty$ followed by $R\to\infty$, we obtain for every $\eps>0$,
\begin{equation}\label{eqPercGamma3a}
    {(1-\nabla(z_c-\eps))}\;{(z_c-z-\eps)}\le\frac{1}{\chi(z)}-\frac{1}{\chi(z_c-\eps)}\le{z_c-z-\eps}.
\end{equation}
The limit $\eps\da0$ then yields
\begin{equation}\label{eqPercGamma3b}
    {(1-\nabla(z_c))}\;{(z_c-z)}\le\frac{1}{\chi(z)}\le{z_c-z}.
\end{equation}
since $\chi(z_c-\eps)^{-1}\da0$ as $\eps\da0$.

It follows from the infrared bound (\ref{eqInfraredBound}) and (\ref{eqBeta}), together with the Cauchy-Schwarz inequality, that $\nabla(z_c)\le O(\beta^{1/2})$.
Thus (\ref{eqPercGamma3b}) implies $\gp=1$ if $\beta$ in Theorem \ref{theoremInfraredBound} is sufficiently small,
which suffices for our needs.
It is possible to extend the argument to any finite triangle diagram (rather than small triangle diagrams only) by using ultraviolet regularization, as done in \cite[Lemma 6.3]{AizenNewma84}.

We start by proving (\ref{eqPercGamma3}).
We call an (occupied or vacant) bond $(u,v)$ \emph{pivotal }for an increasing event $E$, if $E$ occurs if and only if $(u,v)$ is occupied.
A crucial tool in the proof is Russo's formula \cite[Theorem 2.25]{Grimm99}, stating that
\begin{equation}\label{eqPercGamma4}
    \frac \d{\d z} \GrR(x,y)=\sum_{\substack{(u,v)\in {\torus_r}\times {\torus_r}\\|u-v|\le R}}
    D(v-u)\;
    \PrR((u,v) \text{ is pivotal for }x\leftrightarrow y),
    \qquad x,y\in {\torus_r}.
\end{equation}
The factor $D(v-u)$ arises from the chain rule and the fact that the bond $(u,v)$ is occupied with probability $zD(v-u)$.
Since
\begin{equation}
    \{(u,v) \text{ is pivotal for }x\leftrightarrow y\}
    \subset\big\{\{x\leftrightarrow u\}\circ\{v\leftrightarrow y\}\big\}
    \cup\big\{\{x\leftrightarrow v\}\circ\{u\leftrightarrow y\}\big\},
\end{equation}
(\ref{eqPercGamma4}) and the BK-inequality \cite{BergKeste85} imply
\begin{equation}
    \frac \d{\d z} \GrR(x,y)\le\sum_{u,v\in {\torus_r}}D(v-u)\;\PrR(x\leftrightarrow u)\,\PrR(v\leftrightarrow y).
\end{equation}
Summing over $y$ yields the upper bound
\begin{eqnarray}
    \frac \d{\d z}\chiRR(z)
    \;\le\; \Big(\sum_{u\in{\torus_r}}\GrR(x,u)\Big)\Big(\sum_{v\in{\torus_r}}D(v-u)\Big)\Big(\sum_{y\in{\torus_r}}\GrR(v,y)\Big) 
    \;\le\;
    \chiRR(z)^2.\label{eqPercGamma5}
\end{eqnarray}
Therefore,
\begin{equation}\label{eqPercGamma7}
    \frac{\d}{\d z}\left[-\frac1{\chiRR(z)}\right]\le1.
\end{equation}
Integration over the interval $(z,z_c-\eps)$ yields
\begin{equation}
    \frac1{\chiRR(z)}-\frac1{\chiRR(z_c-\eps)}\le z_c-z-\eps.
\end{equation}

For the lower bound in (\ref{eqPercGamma3})
we use arguments as in \cite[Section 9.4]{Slade06} to obtain
    \setlength{\unitlength}{0.00023333in}
    \begingroup\makeatletter\ifx\SetFigFont\undefined%
    \gdef\SetFigFont#1#2#3#4#5{%
      \reset@font\fontsize{#1}{#2pt}%
      \fontfamily{#3}\fontseries{#4}\fontshape{#5}%
      \selectfont}%
    \fi\endgroup%
\begin{align}\label{eqPercGamma6}
    &\PrR((u,v) \text{ is pivotal for }x\leftrightarrow y )\nnb
    &\ge \GrR(x,u)\,\GrR(v,y)-\sum_{s,t\in{\torus_r}}\GrR(x,t)\,\GrR(t,s)\,\GrR(t,u)\,\GrR(s,v)\,\GrR(s,y).\\
    \nonumber
    &= \raisebox{-36pt}
    {\newcommand{\dashlinestretch}{30}
    \begin{picture}(6414,3272)(0,-10)
    \thicklines
    \path(6025,1651)(7375,1651)
    \path(7375,3001)(7375,301)
    \path(6025,3001)(6025,301)
    \path(3375,3001)(3375,301)
    \path(2025,3001)(2025,301)
    \put(4400,1426){\makebox(0,0)[lb]{{\SetFigFont{12}{14.4}{\rmdefault}{\mddefault}{\updefault}$-$}}}
    \put(5430,76){\makebox(0,0)[lb]{{\SetFigFont{12}{14.4}{\rmdefault}{\mddefault}{\updefault}$x$}}}
    \put(5430,1426){\makebox(0,0)[lb]{{\SetFigFont{12}{14.4}{\rmdefault}{\mddefault}{\updefault}$t$}}}
    \put(7590,1426){\makebox(0,0)[lb]{{\SetFigFont{12}{14.4}{\rmdefault}{\mddefault}{\updefault}$s$}}}
    \put(7590,76){\makebox(0,0)[lb]{{\SetFigFont{12}{14.4}{\rmdefault}{\mddefault}{\updefault}$y$}}}
    \put(7590,3001){\makebox(0,0)[lb]{{\SetFigFont{12}{14.4}{\rmdefault}{\mddefault}{\updefault}$v$}}}
    \put(5430,3001){\makebox(0,0)[lb]{{\SetFigFont{12}{14.4}{\rmdefault}{\mddefault}{\updefault}$u$}}}
    \put(3520,76){\makebox(0,0)[lb]{{\SetFigFont{12}{14.4}{\rmdefault}{\mddefault}{\updefault}$y$}}}
    \put(1430,76){\makebox(0,0)[lb]{{\SetFigFont{12}{14.4}{\rmdefault}{\mddefault}{\updefault}$x$}}}
    \put(1430,3001){\makebox(0,0)[lb]{{\SetFigFont{12}{14.4}{\rmdefault}{\mddefault}{\updefault}$u$}}}
    \put(3520,3001){\makebox(0,0)[lb]{{\SetFigFont{12}{14.4}{\rmdefault}{\mddefault}{\updefault}$v$}}}
    \end{picture}
    }
\end{align}
(The contribution to the second line in (\ref{eqPercGamma6}) with $u$ and $v$ interchanged is hidden there, but is incorporated in the next line when we sum over both, $u$ and $v$.)
With Russo's formula (\ref{eqPercGamma4}),
\begin{equation}\label{eqPercGamma9}
    \frac \d{\d z}\chiRR(z)
    \ge \chiRR(z)^2\sum_{|v|\le R}D(v)
    -\chiRR(z)^2\sum_{\substack{v,s,t\in{\torus_r}\\|v|\le R}}D(v)\,\GrR(v,s)\,\GrR(s,t)\,\GrR(t,0).
\end{equation}
Since $\sum_{v\in\Zd}D(v)=1$, the quantity $e_{\sss R}:=\sum_{|v|> R}D(v)$ is $o(1)$ as $R\to\infty$.
Recalling the definition of $\nabla_{\torus_r}^{\sss (R)}(z)$ in (\ref{eqPercGamma2a}) we arrive at
\begin{equation}\label{eqPercGamma10}
    \frac{\d}{\d z}\left[-\frac1{\chiRR(z)}\right]
    \ge (1-e_{\sss R})-\nabla_{\torus_r}^{\sss (R)}(z)
    \ge 1-\nabla_{\torus_r}^{\sss (R)}(z_c-\eps)-e_{\sss R}
\end{equation}
for $z<z_c-\eps$, and an integrated version of this proves (\ref{eqPercGamma3}).

We now consider (\ref{eqHelp1}) and fix $z<z_c-\eps$.
We write $\ErR|\Ccal|$ for the expected cluster size under the measure $\PrR$, i.e., $\ErR|\Ccal|=\chiRR(z)$.
We further denote by $\partial_R {\torus_r}:=\torus_{r+R}\setminus {\torus_r}$ the boundary of $\torus_r$ of thickness $R$. Hence,
\begin{equation}\label{eqHelp2}
    \E_{z,{\torus_{r+R}}}^{\sss(R)}|\Ccal|=\E_{z,{\torus_{r+R}}}^{\sss(R)}|\Ccal|\1_{\{0\nleftrightarrow\partial_R {\torus_r}\}}+\E_{z,{\torus_{r+R}}}^{\sss(R)}|\Ccal|\1_{\{0\leftrightarrow\partial_R {\torus_r}\}}.
\end{equation}
In the first summand, $\E_{z,{\torus_{r+R}}}^{\sss(R)}$ can be replaced by $\E_z^{\sss(R)}$ (the expected cluster size on the infinite lattice, where bonds are restricted to have length $\le R$), because the indicator guarantees $\Ccal\subset {\torus_r}$. This leads to
\begin{equation}\label{eqHelp3}
    \E_{z,{\torus_{r+R}}}^{\sss(R)}|\Ccal|=\E_z^{\sss(R)}|\Ccal|-\E_z^{\sss(R)}|\Ccal|\1_{\{0\leftrightarrow\partial_R {\torus_r}\}}+\E_{z,{\torus_{r+R}}}^{\sss(R)}|\Ccal|\1_{\{0\leftrightarrow\partial_R {\torus_r}\}}.
\end{equation}
By the tree graph bound \cite{AizenNewma84} and the monotonicity of $\E_z^{\sss(R)}|\Ccal|$ in $R$,
\begin{equation}
    \E_z^{\sss(R)}|\Ccal|^2\le\left(\E_z^{\sss(R)}|\Ccal|\right)^3
    \le\chi(z)^3,
\end{equation}
and hence the Cauchy-Schwarz inequality yields
\begin{equation}\label{eqHelp9}
    \E_z^{\sss(R)}|\Ccal|\1_{\{0\leftrightarrow\partial_R {\torus_r}\}}\le\chi(z)^{3/2}\,\P_z(0\leftrightarrow\partial_R {\torus_r})^{1/2}.
\end{equation}
For $z<z_c-\eps$, the first factor on the right is finite, and the latter vanishes as $r\to\infty$.
For the last summand in (\ref{eqHelp3}), we bound as follows:
\begin{equation}\label{eqHelp5}
    \E_{z,{\torus_{r+R}}}^{\sss(R)}|\Ccal|\1_{\{0\leftrightarrow\partial_R {\torus_r}\}}\le (2(r+R)+1)^d\;\P_{z,{\torus_{r+R}}}^{\sss(R)}(0\leftrightarrow\partial_R {\torus_r}),
\end{equation}
but, for $r>R$,
\begin{equation}\label{eqHelp6}
    \P_{z,{\torus_{r+R}}}^{\sss(R)}(0\leftrightarrow\partial_R {\torus_r})
    \le\P_{z,{\torus_{r+R}}}(|\Ccal|\ge r/R)
    \le\P_z(|\Ccal|\ge r/R)
    \le\exp\left\{-\frac{r}{2R\chi(z)^2}\right\},
\end{equation}
where in the first bound we use the fact that occupied bonds have length $\le R$ in the restricted model,
the second bound utilizes the fact that clusters on the torus are a.s.\ smaller than clusters in the infinite lattice \cite[Prop.\ 2.1]{HeydeHofst07},
and in the third bound uses \cite[Prop.\ 5.1]{AizenNewma84}.
The expression on the right hand side of (\ref{eqHelp6}) decays exponentially as $r$ increases, hence the right hand side of (\ref{eqHelp5}) vanishes and (\ref{eqHelp1}) is established
once we have shown that $\E_z^{\sss(R)}|\Ccal|\to\E_z|\Ccal|$ as $R\to\infty$.

This is done as follows.
We write $G^{\sss (R)}_z$ and $\chi^{\sss(R)}$ for the model on the infinite lattice where bonds are restricted to have length $\le R$.
Then obviously $\chi(z)\ge\chi^{\sss(R)}(z)$.
Furthermore, $$G_z(x)-G_z^{\sss(R)}(x)=\P_z\left(0\leftrightarrow x,\text{$\exists$ pivotal bond $(u,v)$ for $\{0\leftrightarrow x\}$ with $|u-v|>R$}\right),$$
hence, using the BK-inequality,
$$\chi(z)-\chi^{\sss(R)}(z)\le\chi(z)^2\left(z\sum_{v:|v|>R}D(v)\right).$$
Again, this vanishes as $R\to\infty$, because $z< z_c-\eps$ and $\sum_vD(v)=1$.

It remains to prove (\ref{eqHelp1a}).
We use again the coupling of \cite[Prop.\ 2.1]{HeydeHofst07} to write
\begin{equation}
    \P_{z_c-\eps,{\torus_{r+R}}}^{\sss(R)}(0\leftrightarrow x)
    \le \P_{z_c-\eps}(0\leftrightarrow x)
    +\P_{z_c-\eps}(0\leftrightarrow\partial_R {\torus_{r}}).
\end{equation}
Since the contribution from terms involving $\P_{z_c-\eps}(0\leftrightarrow\partial_R {\torus_{r}})$ is again exponentially small in $r$ (cf.\ (\ref{eqHelp6})), we readily obtain (\ref{eqHelp1a}).

\subsection{Derivation of $\dep=2$}
Barsky and Aizenman \cite{BarskAizen91} showed that the triangle condition implies also $\bp=1$ and $\dep=2$, where they used the general bounds $\bp\le1$ and $\dep\ge2$ due to \cite{ChayeChaye86} and \cite{AizenBarsk87}, respectively.
It should be noted, that in these references a different version of $\dep$ is considered, namely $\hatdep$ given by
\begin{equation}\label{eqDefM}
    M(z_c,h):=\sum_{k=1}^\infty[1-\e^{-kh}]\,\P_{z_c}(|\Ccal|=k)\asymp{h^{1/\hatdep}}\qquad\text{as $h\to\infty$.}
\end{equation}
The quantity $M$ is known as \emph{magnetization}.
If we consider the critical exponents in terms of slowly varying functions only (and not our stronger version $\asymp$), then the equivalence of $\dep$ and $\hatdep$ can be seen directly via a Tauberian Theorem (e.g.\ \cite[Theorem XIII.5.2]{Felle66}).

Our version of $\dep$ can be derived from (\ref{eqDefM}), as we show now for the mean-field value $\dep=2$.
In particular, we show that
\begin{equation}\label{eqMag3}
    c/\sqrt{n}\le M(z_c,1/n)\le C/\sqrt{n}, \qquad 0<c\le C<\infty,
\end{equation}
implies $\tilde c/\sqrt{n}\le \P_{z_c}(|\Ccal|\ge n) \le \tilde C/\sqrt{n}$ for certain constants $\tilde c, \tilde C\in(0,\infty)$.

For an upper bound on $\P_{z_c}(|\Ccal|\ge n)$ we bound
\begin{eqnarray}
  \P_{z_c}(|\Ccal|\ge n)
  &=& \sum_{k=n}^\infty \P_{z_c}(|\Ccal|=k)
    \le \sum_{k=n}^\infty \frac{1-\e^{-k/n}}{1-\e^{-1}}\; \P_{z_c}(|\Ccal|=k)     \nnb
  &\le& \left[1-\e^{-1}\right]^{-1}\sum_{k=1}^\infty \left[1-\e^{-k/n}\right]\P_{z_c}(|\Ccal|=k)     \nnb
  \label{eqMag4}
  &=& \left[1-\e^{-1}\right]^{-1}\,M(p_c,1/n),
\end{eqnarray}
and hence $\P_{z_c}(|\Ccal|\ge n)\le \tilde C/\sqrt{n}$
for $\tilde C=[1-\e^{-1}]^{-1}C$.

The lower bound is more involved.
For every $\eps>0$ we obtain
\begin{eqnarray*}
    \P_{z_c}(|\Ccal|\ge n)
    &\ge& \sum_{k=n}^\infty \left[1-\e^{-\eps k/n}\right]\; \P_{z_c}(|\Ccal|=k)\\
    &=& M(p_c,\eps/n)-\sum_{k=1}^{n-1} \left[1-\e^{-\eps k/n}\right]\P_{z_c}(|\Ccal|=k).
\end{eqnarray*}
We exploit $1-\e^{-x}\le x$ to bound further
\begin{equation*}
    \sum_{k=1}^{n-1} \left[1-\e^{-\eps k/n}\right]\P_{z_c}(|\Ccal|=k)
    \le \frac\eps n\sum_{k=1}^{n-1} k\,\P_{z_c}(|\Ccal|=k).
\end{equation*}
Note
\begin{equation*}
    \sum_{k=1}^{n-1} k\,\P_{z_c}(|\Ccal|=k)
    =   \sum_{k=1}^{n-1} \sum_{l=1}^k\,\P_{z_c}(|\Ccal|=k)
    =   \sum_{l=1}^{n-1} \sum_{k=l}^{n-1} \,\P_{z_c}(|\Ccal|=k)
    \le \sum_{l=1}^{n-1} \,\P_{z_c}(|\Ccal|\ge l),
\end{equation*}
whence
\begin{equation*}
    \P_{z_c}(|\Ccal|\ge n)
    \ge M(p_c,\eps/n) - \frac\eps n\sum_{k=1}^{n-1} \P_{z_c}(|\Ccal|\ge k).
\end{equation*}
We apply (\ref{eqMag4}) and compare with (\ref{eqMag3}) to obtain
\begin{equation}\label{eqMag5}
    \P_{z_c}(|\Ccal|\ge n)
    \ge\frac{c\sqrt{\eps}}{\sqrt{n}}-\frac{\eps}{n}
    \underbrace{\sum_{k=1}^{n-1} \frac{C}{[1-\e^{-1}]\,\sqrt{k}}}_{\le2C[1-\e^{-1}]^{-1}\sqrt{n}}.
\end{equation}
This proves that $\P_{z_c}(|\Ccal|\ge n)\ge \tilde c/\sqrt{n}$
with $\tilde c=c\sqrt{\eps}-2\eps C[1-\e^{-1}]^{-1}$,
and $\tilde c>0$ as long as $\eps$ is small enough.
With a modification in (\ref{eqMag5}), the argument can be extended to the case $\dep\neq2$,
but we refrain from giving this argument.

\section{Diagrammatic bounds for the Ising model}
\label{appendixBoundsIsing}
This appendix is devoted to the proof of Proposition \ref{propBoundIsing} for the Ising model.
We proceed by considering the quantities $\pi^{\sss (M)}_{\Lambda}$ $(M=0,1,2,\dots)$ defined in \cite{Sakai07},
which give rise to $\Pi_\sM^{\Lambda}$ and $R_{\sss M+1}^{\Lambda}$ by \cite[(1.12) and (1.13)]{Sakai07}:
\begin{equation}\label{eqPipi}
    \delta_{0,x}+\Pi_\sM^{\Lambda}(x)=\sum_{N=0}^{M}(-1)^N\pi^\sN_{\Lambda}(x), \qquad
    0\le \left|R_{\sss M}^{\Lambda}(x)\right|\le \tau(z)\,\sum_{u,v}\pi^{\sss {(M)}}_{\Lambda}(u)\,D(v-u)\,G(v,x).
\end{equation}
We first discuss a bound on $\pi^\sN_{\Lambda}$, and use this to prove Proposition \ref{propBoundIsing}.

\begin{prop}[Diagrammatic bounds for the Ising model]\label{propIsingDiagram}
    Suppose that, for the Ising model, $f(z)\le K$ for some $z\in(0,z_c)$, $K>1$.
    Then there exists a constant $\bar c_K>0$, such that
    \begin{equation}\label{eqIsingDiagramResult1}
        \delta_{0,N}\le\sum_x\pi^\sN_{\Lambda}(x)\le
        \begin{cases}
            1+\bar c_K\beta^2\quad &(N=0),\\
            (\bar c_K\beta)^N\quad &(N\ge1),
        \end{cases}
    \end{equation}
    and
    \begin{equation}\label{eqIsingDiagramResult2}
        \sum_x[1-\cos(k\cdot x)]\pi^\sN_{\Lambda}(x)\le
        \Ci(\bar c_K\beta)^{N\vee1},
    \end{equation}
    uniformly in $\Lambda$.
\end{prop}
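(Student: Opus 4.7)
The plan is to follow the proof of \cite[Prop.\ 3.2]{Sakai07}, which establishes closely related diagrammatic bounds, and to adapt it to our bootstrap hypothesis $f(z)\le K$ with $\hat C_{\lambda_z}$ in place of Sakai's bootstrap quantity. Sakai expresses each $\pi^\sN_\Lambda(x)$, via the random-current representation of the Ising two-point function, as a non-negative sum of lace-expansion diagrams: convolution products of $G_z$ and $D$ arranged along a backbone that traverses $N$ bubble-like subdiagrams. Our bootstrap supplies exactly the inputs needed to bound these diagrams: $f_2(z)\le K$ gives $\hat G_z(k)\le K\,\hat C_{\lambda_z}(k)$; $f_3(z)\le K$ at $l=0$ gives $\sum_x[1-\cos(k\cdot x)]\,G_z(x)\le O(K)\,\hat C_{\lambda_z}(k)^{-1}$; and $f_1(z)\le K$ controls the factors of $\tau(z)$ attached to each step of the expansion.

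For \eqref{eqIsingDiagramResult1}, summing $\pi^\sN_\Lambda(x)$ over $x\in\Zd$ means evaluating the corresponding diagrams at Fourier argument $k=0$. Each of the $N$ bubbles is then bounded by a quantity of the form $\int_{\Td}\hat D(l)^2\,\hat G_z(l)^2\,\dk$, which is $O(K^2\beta)$ by $f_2\le K$ combined with Assumption \ref{assumptionBeta} for $s=2$ (using the elementary comparison $[1-\lambda_z\hat D(l)]^{-1}\le O(1)\,[1-\hat D(l)]^{-1}$ obtained by separating the regions $\hat D\ge 0$ and $\hat D<0$ via (D3)/(D3$'$)). Multiplying over the $N$ bubbles produces the geometric factor $(\bar c_K\beta)^N$; for $N=0$, direct inspection of $\pi^{(0)}_\Lambda$ shows that it is $\delta_{0,x}$ plus a single-bubble contribution of order $\beta^2$.

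For the weighted bound \eqref{eqIsingDiagramResult2}, I would distribute the $[1-\cos(k\cdot x)]$ weight across the displacement vectors $y_1,\dots,y_n$ of a path from $0$ to $x$ in the diagram via the standard estimate $1-\cos\bigl(k\cdot\sum_iy_i\bigr)\le n\sum_i[1-\cos(k\cdot y_i)]$. This reduces the estimate to a finite sum, indexed by the location of the cosine weight, of diagrams in which one distinguished two-point function is replaced by its $[1-\cos]$-weighted version. The bound from $f_3\le K$ then produces the factor $\hat C_{\lambda_z}(k)^{-1}$ from that distinguished factor, while the remaining $N-1$ bubbles still give $(\bar c_K\beta)^{N-1}$; for $N=0$ a single-bubble computation gives $\bar c_K\beta\,\hat C_{\lambda_z}(k)^{-1}$, matching the exponent $N\vee 1=1$.

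The main obstacle will be bookkeeping: Sakai's diagrams are considerably more intricate than the percolation diagrams of \cite{BorgsChayeHofstSladeSpenc05b}, reflecting the four-point structure of the random-current representation. All the heavy combinatorial work is, however, already performed in \cite[Sects.\ 3--4]{Sakai07}; what remains is to verify that every Fourier factor appearing in Sakai's original estimates can legitimately be replaced by the corresponding $\hat C_{\lambda_z}$-based quantity. This is immediate from \eqref{eqDCbound} together with the comparability of $\hat C_{\lambda_z}$ and $\hat C_1$ on the support of $\hat D^2$, at the price of increasing the constant $\bar c_K$.
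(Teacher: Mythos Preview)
Your overall strategy---adapt Sakai's diagrammatic bounds to the present bootstrap hypothesis, replacing $\hat C_1$ by $\hat C_{\lambda_z}$ and mediating between the two via \eqref{eqDCbound}---is exactly what the paper does. But the key input you invoke for \eqref{eqIsingDiagramResult2} is wrong. You claim that $f_3(z)\le K$ at $l=0$ yields
\[
\sum_x[1-\cos(k\cdot x)]\,G_z(x)\;\le\; O(K)\,\hat C_{\lambda_z}(k)^{-1}.
\]
This is false: the left side equals $\chi(z)-\hat G_z(k)$, which diverges as $z\nearrow z_c$, while the right side stays bounded. Evaluating $f_3$ at $l=0$ does bound $|\Delta_k\hat G_z(0)|$ by $K\,U_{\lambda_z}(k,0)$, but $U_{\lambda_z}(k,0)$ carries factors of $\hat C_{\lambda_z}(0)=\chi(z)$ that you have silently discarded.

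What is actually needed, and what the paper proves as Lemma~\ref{lemmaCosKxGBound}, is the \emph{pointwise} bound
\[
\sup_x\,[1-\cos(k\cdot x)]\,G_z(x)\;\le\;300K\,\hat C_{\lambda_z}(k)^{-1}\,(C_{\lambda_z}\ast C_{\lambda_z})(0),
\]
obtained by writing $[1-\cos(k\cdot x)]\,G_z(x)=-\tfrac12\int e^{-il\cdot x}\,\Delta_k\hat G_z(l)\,(2\pi)^{-d}\,\d l$, invoking $f_3\le K$, and then \emph{integrating} $U_{\lambda_z}(k,l)$ over $l$. In the diagrams this supremum is pulled out of one $G$-factor; the summation over the corresponding vertex is then absorbed by the remaining two-point functions in that subdiagram. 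Because those remaining factors still form a (non-vanishing) bubble, the subdiagram carrying the cosine weight still contributes an $O(\beta)$. Your accounting ``the distinguished factor produces $\hat C_{\lambda_z}(k)^{-1}$, the remaining $N-1$ bubbles give $(\bar c_K\beta)^{N-1}$'' therefore loses one power of $\beta$ and delivers only $\hat C_{\lambda_z}(k)^{-1}(\bar c_K\beta)^{N-1}$ for $N\ge1$, short of the required $(\bar c_K\beta)^{N}$.
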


This proposition is a variation of \cite[Proposition 3.2]{Sakai07}.
However, it is important that the bounds of the type $\sum_x|x|^2\pi_{\Lambda}^{\sN}(x)$ in \cite{Sakai07} have been replaced by bounds involving the factor $1-\cos(k\cdot x)$, as in (\ref{eqIsingDiagramResult2}).
This replacement is a basic philosophy for this paper.
The following heuristic reasoning explains why the factor $|x|^2$ is not sufficient in the case of infinite variance spread-out models.

By (\ref{eqAkira0}) below, $\pi^{\sss (0)}_z(x)\le G_z(x)^3$.
Let us assume that $G_z(x)\approx C_{\lambda_z}(x)$, as suggested by Theorem \ref{theoremInfraredBound}.
For $z=z_c$,
and using that $C_1(x)\approx \const/|x|^{d-(\alpha\wedge2)}$,
that would lead to
$$\sum_x|x|^2\pi_{z_c}^{\sss (0)}(x)\approx\sum_x|x|^2\frac{1}{|x|^{3(d-(\alpha\wedge2))}},$$
and this is finite if and only if $d<3(d-(\alpha\wedge2))-2$.
In particular, this suggests that for $\alpha<2$ and $2(\alpha\wedge2)<d<1+3/2 (\alpha\wedge2)$, $\sum_x|x|^2\pi_{z_c}^{\sss (0)}(x)=\infty$ but $\sum_x[1-\cos(k\cdot x)]\pi_{z_c}^{\sss (0)}(x)<\infty$. Thus, using $\sum_x|x|^2\pi_{z_c}^{\sss (0)}(x)<\infty$ as a criterion for $d>d_c$ suggests a wrong value for the critical dimension.
Rather, it appears that we must assume $\sum_x|x|^{\alpha\wedge2}\pi_{z_c}^{\sss (0)}(x)<\infty$ instead.

We first show how Proposition \ref{propIsingDiagram} implies Proposition \ref{propBoundIsing}, and afterwards discuss its proof.

\proof[Proof of Proposition \ref{propBoundIsing} subject to Proposition \ref{propIsingDiagram}]
We proceed as in the proof of \cite[Prop.\ 5.2]{BorgsChayeHofstSladeSpenc05b}.
The bounds (\ref{eqDiaBoundIsing1})--(\ref{eqDiaBoundIsing2}) follow immediately with $c_K=2\bar c_K$, where the extra $1$ in the $(N\!=\!0)$-case is compensated by the substraction of $\delta_{0,x}$, and the factor 2 comes from summing the geometric series (where we required $\beta$ small enough to ensure $\bar c_K\beta\le1/2$).
For the bounds on the remainder term $R_\sM$, we see by (\ref{eqPipi}) that
\begin{equation}
    \sum_x |R_{\sss M}^{\Lambda}(x)|
    \le K\hat\pi^{\sss {(M)}}_{\Lambda}(0)\,\chi(z).
\end{equation}
However, by (\ref{eqIsingDiagramResult1}), (\ref{eqDiaBoundIsing3}) follows if $z<z_c$ and $M=M(z)$ is so large that $(c_K\beta)^M\chi(z)\le c_K\beta$.
Finally, for (\ref{eqDiaBoundIsing4}), we use (\ref{eqSumCosBound}) below with $j=3$ to see that
\begin{equation}\label{eqBoundIsing1}
    \begin{split}
    \sum_{x\in\Zd} [1-\cos(k\cdot x)]\, |R_{\sss M}^{\Lambda}(x)|
    \le
    & {}7K[1-\Dk]\hat\pi^{\sss {(M)}}_{\Lambda}(0)\chi(z)
    +7K\left(\hat\pi^{\sss {(M)}}_{\Lambda}(0)-\hat\pi^{\sss {(M)}}_{\Lambda}(k)\right)\chi(z)\\
    &{}+7K\hat\pi^{\sss {(M)}}_{\Lambda}(0)\left(\GN-\Gk\right).
    \end{split}
\end{equation}
For the first term, we use (\ref{eqDCbound}) and (\ref{eqIsingDiagramResult1}) to bound
$$7K[1-\Dk]\hat\pi^{\sss {(M)}}_{\Lambda}(0)\chi(z)\le14K(\bar c_K\beta)^M\chi(z)\Ci.$$
For the second term, we use (\ref{eqIsingDiagramResult2}) to see that
$\hat\pi^{\sss {(M)}}_{\Lambda}(0)-\hat\pi^{\sss {(M)}}_{\Lambda}(k)\le\Ci(\bar c_K\beta)^{M\vee2}$.
Finally, for the third term in (\ref{eqBoundIsing1}), we use the upper bound on $f_3$ and the uniform bound $\ClK\le(1-\lambda_z)^{-1}=\chi(z)$ to obtain
\begin{equation}
    |\GN-\Gk|
    =\frac12|\Delta_k\GN|\le16K\Ci\left(3\left(1-\lambda_z\right)^{-2}\right)
    =48K\Ci\chi(z)^2.
\end{equation}
Together with (\ref{eqIsingDiagramResult1}), this yields the desired bound.
\qed

\vspace{.5em}
We now prove Proposition \ref{propIsingDiagram} subject to the diagrammatic bounds in \cite{Sakai07},
which will occupy the remainder of the paper.
Our proof is an adaptation of the proof of \cite[Prop.\ 3.2]{Sakai07}, with a modified bootstrap hypothesis.
In particular, the factor $|x|^2$ at various places in that proof is replaced by the factor $1-\cos(k\cdot x)$ here.
We fix $z\in(0,z_c)$ and throughout the remainder of the section omit it from the notation (e.g., we write $\tau$ for $\tau(z)$).
Also we fix some subset $\Lambda$ containing the origin.
We keep in mind that we are interested in the thermodynamic limit $\Lambda\nearrow\Zd$,
and in fact our bounds hold uniformly in $\Lambda$.
We elaborate on this after Prop.\ \ref{propDiagrammaticBoundAkira} below.
All sums below are taken over $\Zd$, unless stated otherwise.

We define the quantity
\begin{equation}\label{eqDefGtilde}
    \tilde G(x):=\tau(D\ast G)(x),
\end{equation}
and note the basic estimate
\begin{equation}\label{eqBoundGtilde}
    G(x)\le \delta_{0,x}+\tilde G(x)
\end{equation}
resulting from the random-current representation and the source switching lemma (cf.\ \citeAkira{(4.2)}).

In line with (\ref{eqBubbleCondition}), we write $B=(G\ast G)(0)=\sum_x G(x)^2$ for the \emph{bubble diagram}, and similarly $\tilde{B}=(\tilde{G}\ast \tilde{G})(0)$ for the ``non-vanishing bubble diagram''.
For the latter we bound
\begin{eqnarray}\label{eqAkira9}
\nonumber
    \tilde{B}
    &=&\tau^2 \int_\Td \left(\Dk\hat G(k)\right)^2\dk
    \le K^4 \int_\Td \left(\Dk\ClK\right)^2\dk\\
\nonumber
    &\le& 4K^4 \int_\Td \frac{\Dk^2}{[1-\Dk]^2}\dk
    \le 4K^4 \beta
\end{eqnarray}
using that $\tau=f_1(z)\le K$ and $f_2(z)\le K$ in the first line,
and (\ref{eqDCbound}) and Assumption \ref{assumptionBeta} in the second line.
On the other hand, by (\ref{eqBoundGtilde}),
\begin{equation}\label{eqAkira10}
    B=\sum_x G(x)^2=1+\sum_{x\neq0} G(x)^2\le1+\sum_x\tilde{G}(x)^2=1+\tilde{B}\le1+4K^4\beta.
\end{equation}
Furthermore, it is easy to see that, by the Cauchy-Schwarz inequality, ``open bubbles'' are bounded by a ``closed bubble'', i.e., for all $x\in\Zd$,
\begin{equation}\label{eqAkira19}
    (G\ast G)(x)=\sum_vG(v)\,G(x-v) \le B,
    \qquad (\tG\ast \tG)(x) \le \tilde B.
\end{equation}

Here is an outline of the proof.
We bound certain diagrams to be defined below in terms of $B$ and $\tilde B$.
In turn, these diagrams bound the lace expansion coefficients $\pi^{(j)}$, \cite{Sakai07}.
Hence, by exploiting (\ref{eqAkira9}) and  (\ref{eqAkira10}), we prove a sufficient decay of the lace expansion coefficients subject to $\beta$ being sufficiently small.

We now define various quantities needed to describe the bounding diagrams.
All notation is chosen consistently with \cite{Sakai07}, which provides our basic estimates.
In order to emphasize the diagrammatic structure, we write $G$ and $\tG$ with two arguments, with the understanding that $G(y,x)=G(x-y)$, and for $\tG$ appropriately.

Let
\begin{equation}\label{eqAkira13}
    \psi(y,x):=\sum_{j=0}^\infty(\tG^2)^{\ast j}(y,x)
    =\delta_{y,x}+\sum_{j=1}^{\infty} \sum_{\substack{u_0,u_1,\dots,u_{j}\in\\ \{x\}\times (\Zd)^{j-1}\times\{y\}}}
        \prod_{l=1}^{j}\tG(u_{l-1},u_l)^2
\end{equation}
denote a ``chain of bubbles'',
and
\begin{equation}
\tilde\psi(y,x)=\psi(y,x)-\delta_{y,x}.
\end{equation}
If $\beta$ is so small that $\tilde B<1/2$ (which we shall assume from now on), then a basic calculation shows that
\begin{equation}\label{eqAkira38}
    \tilde\psi:=\sup_y\sum_x \tilde\psi(y,x)\le 2\tilde B=O(\beta).
\end{equation}
Let
\begin{equation}\label{eqAkira11}
    P'^{(0)}_u(y,x):=G(y,x)^2G(y,u)\,G(u,x)=\raisebox{-12pt}{\includegraphics[scale=.3]{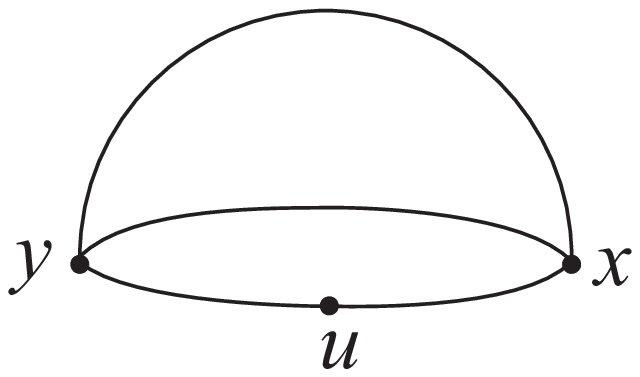}\;\;,}
\end{equation}
\begin{equation}\label{eqAkira12}
    P''^{(0)}_{u,v}(y,x):=G(y,x)\,G(y,u)\,G(u,x)\sum_{v'}G(y,v')\,G(v',x)\,\psi(v',v)=
    \raisebox{-14pt}{\includegraphics[scale=.3]{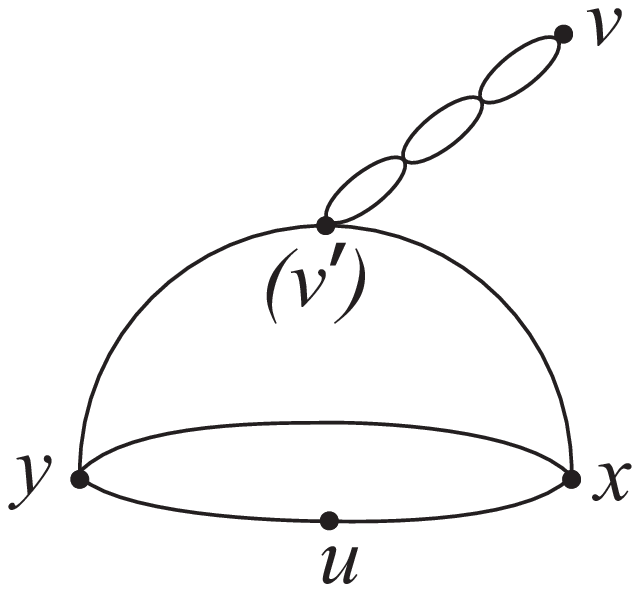}\;\;.}
\end{equation}
In the last equalities of (\ref{eqAkira11})--(\ref{eqAkira12}) we used the pictorial representation introduced in Figure \ref{figureBubble}.
Recall that a line between two points, say $y$ and $x$, represents the two-point function $G(y,x)$, and vertices in brackets are summed over.
The quantities $P'^{(0)}$ and $P''^{(0)}$ are the leading terms in the quantities $P'$ and $P''$, defined in (\ref{e:P'P''-def}) below.

We further define
\begin{equation}\label{eqAkira14}
    P^{(1)}(v_1,v'_1):=2\tilde\psi(v_1,v'_1)\,G(v_1,v'_1),
\end{equation}
and, for $j=2,3,\dots$,
\begin{equation}\label{eqAkira15}
\begin{split}
  P^{(j)}(v_1,v'_j) := \sum_{\substack{v_2,\dots,v_j\\v'_1,\dots,v'_{j-1}}}
    & G(v_1,v_2)\,G(v_2,v'_1)\left(\prod_{i=1}^{j} \tilde\psi(v_1,v'_1)\right) \\
    & \times\left(\prod_{i=2}^{j-1} G(v'_{i-1},v_{i+1})\,G(v_{i+1},v'_{i})\right)G(v_j,v'_{j-1}).
\end{split}
\end{equation}
The first three elements of the sequence look diagrammatically like
\begin{gather*}
P^{\sss(1)}(v_1,v'_1)=\raisebox{-7pt}{\includegraphics[scale=.3]{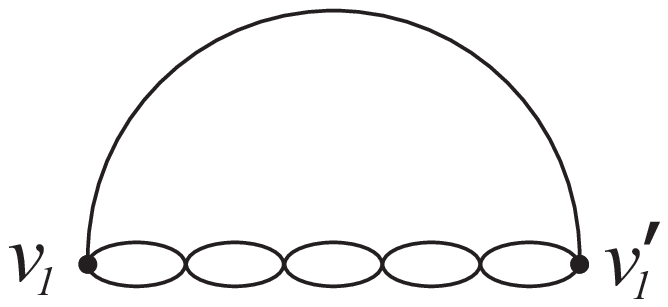}\;\;,}\qquad
P^{\sss(2)}(v_1,v'_2)=\raisebox{-15pt}{\includegraphics[scale=.3]{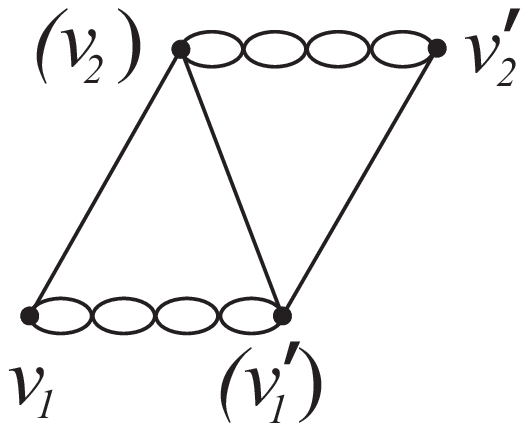}\;\;,}\qquad
P^{\sss(3)}(v_1,v'_3)=\raisebox{-15pt}{\includegraphics[scale=.3]{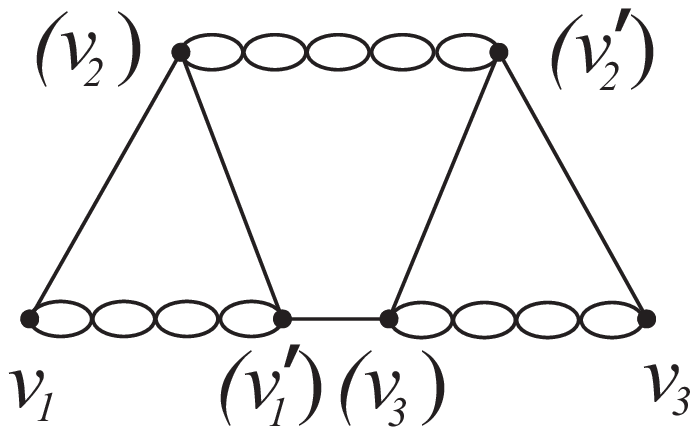}\;\;.}
\end{gather*}
Recall that vertices in brackets are summed over.

We now obtain quantities $P'$ and $P''$ as variations on $P$.
To this end, we define $P_{u}^{\prime{\sss(j)}}(v_1,v'_j)$ by
replacing one of the $2j-1$ two-point functions, say $G(z,z')$, on the right-hand
side of (\ref{eqAkira14})--(\ref{eqAkira15}) by the product of \emph{two}
two-point functions,
$G(z,u)\,G(u,z')$,
and then summing over all $2j-1$ choices of this replacement. For
example, we define
\begin{align}\lbeq{P'1-def}
    P_{u}^{\prime{\sss(1)}}(v_1,v'_1)=2\tilde\psi(v_1,v'_1)\,G(v_1,u)\,G(u,v'_1)
    =\raisebox{-14pt}{\includegraphics[scale=.3]{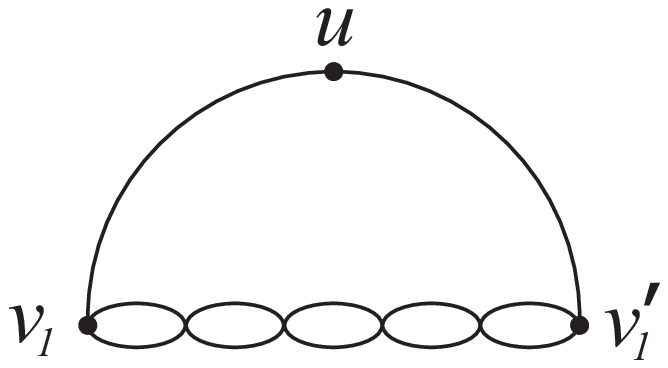}\;\;,}
\end{align}
and
\begin{align}
    P_{u}^{\prime{\sss(2)}}(v_1,v'_2)=\sum_{v_2,v'_1}
    \bigg(\prod_{i=1}^2\tilde\psi(v_i,v'_i)\bigg)
    \Big(
       G(v_1,u)\,G(u,v_2)\,G(v_2,v'_1)\,G(v'_1,v'_2)&\nonumber\\
       {}+G(v_1,v_2)\,G(v_2,u)\,G(u,v'_1)\,G(v'_1,v'_2)&\nonumber\\[7pt]
       {}+G(v_1,v_2)\,G(v_2,v'_1)\,G(v'_1,u)\,G(u,v'_2)&
    \Big).
\end{align}

We define $P_{u,v}^{\prime\prime{\sss(j)}}(v_1,v'_j)$
similarly as follows. First we take \emph{two} two-point functions
in $P^{\sss(j)}(v_1,v'_j)$, one of which (say,
$G(y_1,y'_1)$ for some $y_1,y'_1$) is
among the aforementioned $2j-1$ two-point functions, and the other
(say, $\tilde G(y_2,y'_2)$ for some $y_2,y'_2$) is among
those of which $\psi(v_i,v'_i)-\delta_{v_i,v'_i}$ for
$i=1,\dots,j$ are composed.  The product
$G(y_1,y'_1)\tilde G(y_2,y'_2)$
is then replaced by
\begin{align}
&\bigg(\sum_{v'}G(y_1,v')\,G(v',y'_1)\,\psi(v',v)\bigg)
\Big(G(y_2,u) \tilde G(u,y'_2)+\tilde G(y_2,y'_2)\,\delta_{u,y'_2}\Big)\nonumber\\
&+G(y_1,u)\,G(u,y'_1)\sum_{v'}\Big(G(y_2,v') \tilde G(v',y'_2)+
\tilde G(y_2,y'_2)\,\delta_{v',y'_2}\Big) \,\psi(v',v).
\end{align}
In our pictorial representation,
\begin{equation*}
    \raisebox{-18pt}{\includegraphics[scale=.3]{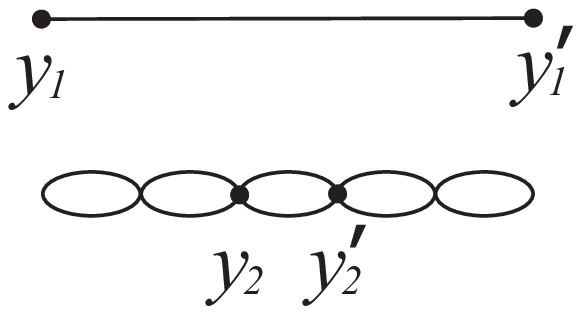}}
    \qquad\text{is replaced by}\qquad
    \raisebox{-18pt}{\includegraphics[scale=.3]{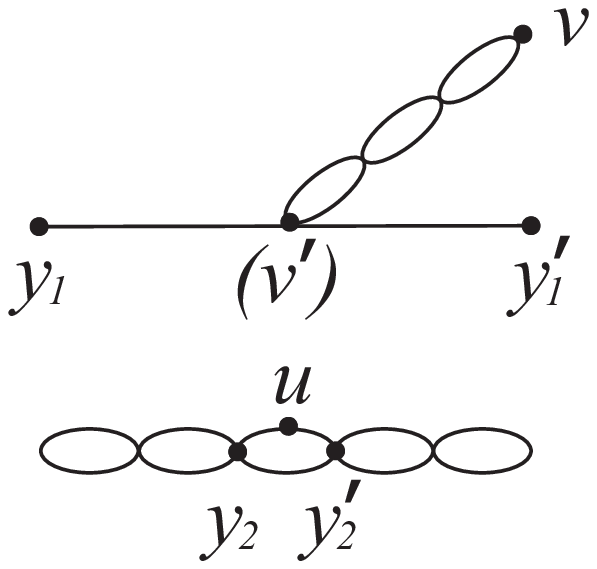}}
    \quad+\quad\raisebox{-18pt}{\includegraphics[scale=.3]{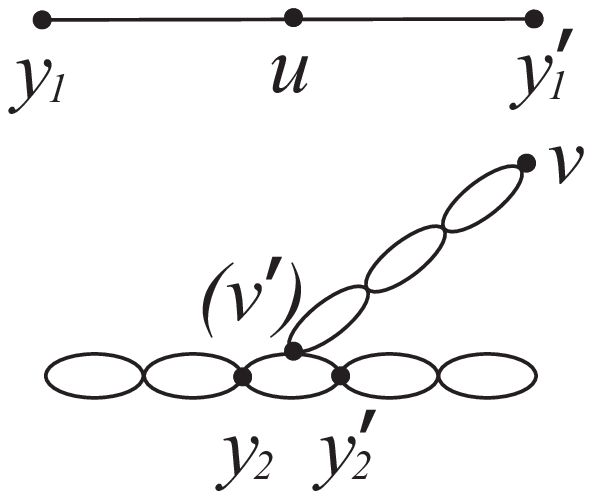}\;\;.}
\end{equation*}

\noindent
Finally, we define $P_{u,v}^{\prime\prime{\sss(j)}}(v_1,v'_j)$ by taking account
of all possible combinations of $G(y_1,y'_1)$ and $\tilde G(y_2,y'_2)$.
For example, we define $P_{u,v}^{\prime\prime{\sss(1)}}(v_1,v'_1)$
as
\begin{align}\lbeq{P''1-def}
    P_{u,v}^{\prime\prime{\sss(1)}}(v_1,v'_1)
    &=\sum_{u',u'',v'}\bigg(2\psi(v_1,u')\,\tilde G(u',u'')
     \Big(G(u',u)\,\tilde G(u,u'')+\tilde G(u',u'')\,\delta_{u,u''}\Big)\,
     \psi(u'',v'_1)\nonumber\\
    &\hspace{5em}\times G(v_1,v')\,G(v',v'_1)\psi(v',v)+(\text{permutation of $u$ and }v')
     \bigg)\\
    &=\raisebox{-14pt}{\includegraphics[scale=.3]{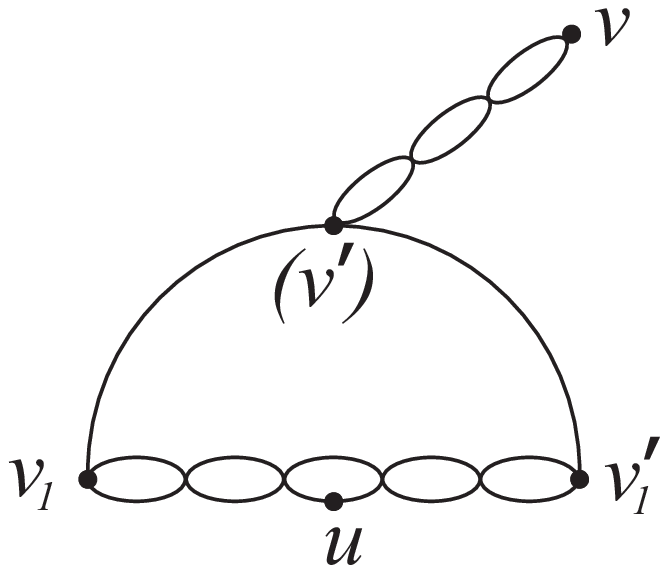}}+~
     \raisebox{-14pt}{\includegraphics[scale=.3]{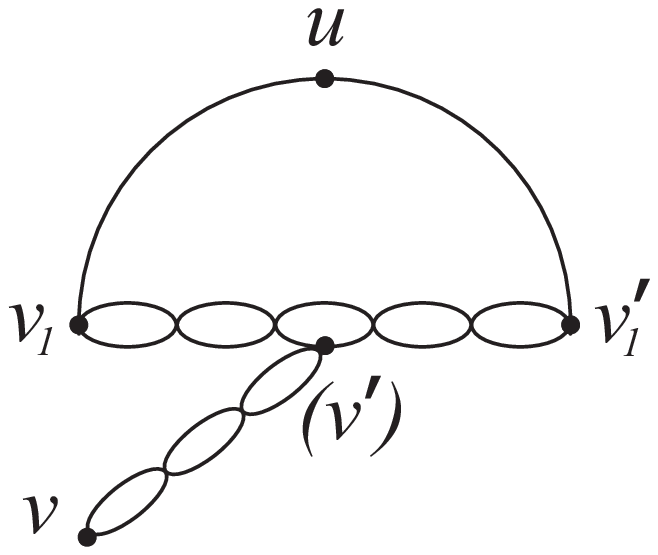}\;\;,}\nonumber
\end{align}
where the permutation term corresponds to the second diagram.

We let
\begin{align}\lbeq{P'P''-def}
P'_{u}(y,x)=\sum_{j\ge0}P_{u}^{\prime{\sss(j)}}(y,x)
    =\raisebox{-12pt}{\includegraphics[scale=.3]{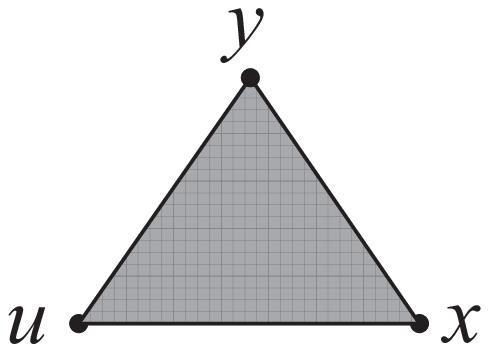}\;\;,}&&
P''_{u,v}(y,x)&=\sum_{j\ge0}P_{u,v}^{\prime\prime{\sss
 (j)}}(y,x)
    =\raisebox{-12pt}{\includegraphics[scale=.3]{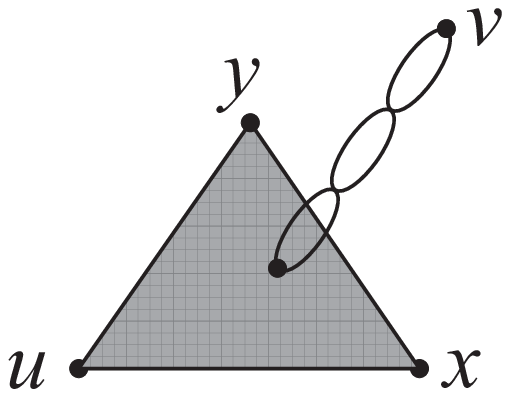}\;\;,}
\end{align}
where $P_{u}^{\prime{\sss(0)}}(y,x)$ and
$P_{u,v}^{\prime\prime{\sss(0)}}(y,x)$ are the leading
contributions to $P'_{u}(y,x)$ and $P''_{u,v}(y,x)$,
respectively.

Finally, we define
\begin{align}
Q'_{u}(y,x)&=\sum_z\big(\delta_{y,z}+\tilde G(y,z)\big)
 P'_{u}(z,x)=
 \raisebox{-12pt}{\includegraphics[scale=.3]{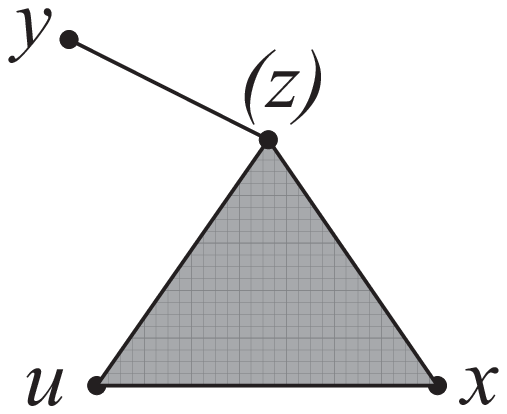}\;\;,}\lbeq{Q'-def}\\
Q''_{u,v}(y,x)&=\sum_z\big(\delta_{y,z}+\tilde G(y,z)
 \big)P''_{u,v}(z,x)\nonumber\\
&\quad+\sum_{v',z}\big(\delta_{y,v'}+\tilde G(y,v')\big)\,\tilde
 G(v',z)\,P'_{u}(z,x)\,\psi(v',v)\lbeq{Q''-def},
\end{align}
that is, pictorially,
\begin{equation}\label{eqQppPict}
    Q''_{u,v}(y,x)
    =\raisebox{-12pt}{\includegraphics[scale=.3]{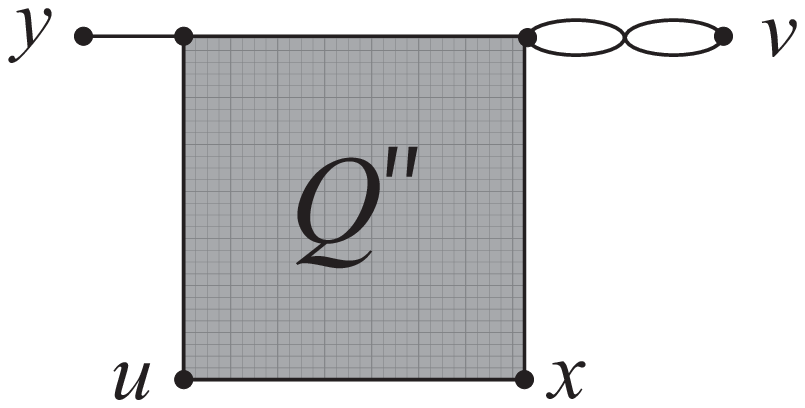}}\quad
    =\quad\raisebox{-12pt}{\includegraphics[scale=.3]{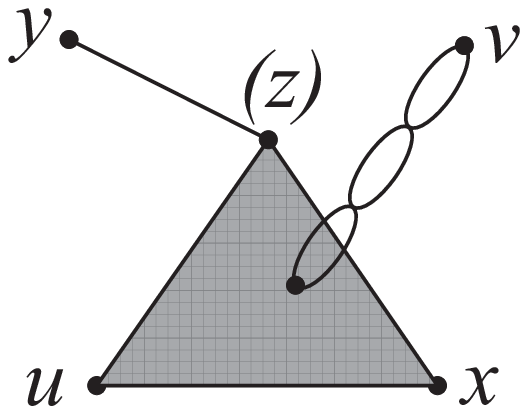}}\quad
    +\quad\raisebox{-12pt}{\includegraphics[scale=.3]{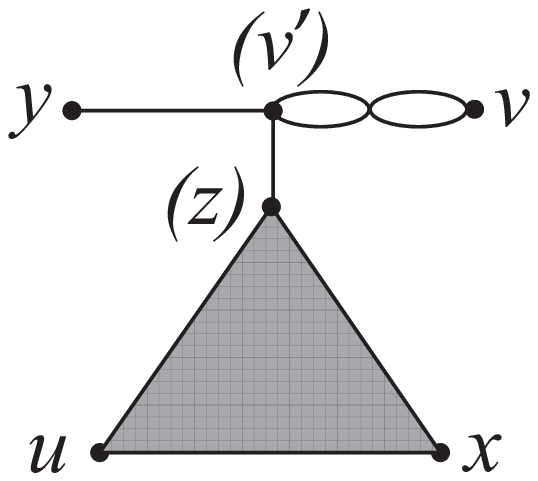}\;\;.}
\end{equation}

Based on the lace expansion, Sakai proved the following diagrammatic bound:
\begin{prop}[Diagrammatic bounds \citeAkira{Prop.\  4.1}]\label{propDiagrammaticBoundAkira}
For the ferromagnetic Ising model,
\begin{align}\label{eqDiagrammaticBound}
     \pi_\Lambda^\sN(x)\le
     \begin{cases}
        P'^{\sss(0)}_0(0,x)
          &(N=0),\\[5pt]
        \displaystyle\sum\limits_{\substack{b_1,\dots,b_j\\v_1,\dots,v_j}}
        P'^{\sss (0)}_{v_1}(0,\underline{b}_1)
        \left(
        \displaystyle\prod\limits_{i=1}^{N-1}
        \tau D(b_i)\, Q''_{v_i,v_{i+1}}(\overline{b}_{i},\underline{b}_{i+1})\right)
        \tau D(b_j)\, Q'_{v_i,v_{i+1}}(\overline{b}_{i},x)
        &(N\ge1),
     \end{cases}
\end{align}
where the sum is taken over vertices $v_i$ and (directed) bonds $b_i=(\underline{b}_{i},\overline{b}_{i})$, $i=1,\dots,j$.
We denote $D(b_i)=D(\overline{b}_{i}-\underline{b}_{i})$ and
regard the empty product as 1 by convention.
The bound (\ref{eqDiagrammaticBound}) holds uniformly in $\Lambda$.
\end{prop}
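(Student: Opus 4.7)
The plan is to invoke the lace expansion for the Ising model as developed in \cite[Sect.~3--4]{Sakai07} essentially verbatim, since the statement of this proposition coincides with \cite[Prop.~4.1]{Sakai07}. In this paper the diagrammatic bound is cited and used as a black box; the point of describing a proof plan is therefore to outline how one would re-derive Sakai's estimate. The central conceptual tool is the random-current representation: the spin-spin correlation $\langle \phi_0\phi_x\rangle$ is rewritten as a ratio of sums over integer-valued current configurations $\mathbf{n}=(n_{\{u,v\}})$ with sources imposed by the parity conditions $\partial \mathbf{n}=\{0,x\}$. This converts the Gaussian-like problem of bounding spin correlations into a combinatorial/geometric problem about connectivity in a random graph, in close analogy with percolation.

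The proof proceeds in three main steps. First, one performs inclusion-exclusion on the \emph{first pivotal bond} in the current-graph sense, following the template of the percolation lace expansion \cite{HaraSlade90a,BorgsChayeHofstSladeSpenc05b}. After $N$ rounds of this expansion one obtains the identity (\ref{eqPipi}) together with a representation of $\pi^{\sss(N)}_\Lambda(x)$ as a sum over configurations that are $N$-fold ``doubly-connected'' through an alternating sequence of pivotal bonds $b_1,\dots,b_N$ and intermediate clusters. Second, each intermediate cluster is bounded via the source-switching lemma of Griffiths-Hurst-Sherman combined with a random-current analogue of the BK inequality (developed by Sakai). These bounds identify the permissible diagrammatic shapes: a boundary ``triangle'' $P'^{\sss(0)}$ at the beginning, an internal ``$Q''$-shape'' at each intermediate pivot, and a closing ``$Q'$-shape'' at the end. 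The displacement by bonds contributes $\tau\,D(b_i)$ at each pivot. Third, one uses (\ref{eqBoundGtilde}) together with the bubble bound (\ref{eqAkira10}) to verify that the resulting diagrammatic expression indeed admits the majorant stated in (\ref{eqDiagrammaticBound}), uniformly in the finite volume $\Lambda$.

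The main obstacle is the second step. Unlike percolation, where the BK inequality is a classical statement about disjoint occurrence of increasing events, the random-current analogue requires a substantial amount of new machinery: one has to disjointly decompose events involving the source-switched currents $\mathbf{n}_1+\mathbf{n}_2$ with sources at $\{0,x\}\triangle\{0,x\}=\emptyset$, and to control the price of each ``split'' by two-point functions. Tracking which of these splits yields the ``extra chain of bubbles'' that distinguishes $Q''$ from $Q'$ (compare (\ref{eqQppPict})) requires a careful analysis of which connections are forced by the source configuration and which can be freely chosen; this is precisely where the $\psi$-chains in the definitions of $P''$ and $Q''$ enter.

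Once the proposition is established in its stated form, its use in the present paper is entirely combinatorial and goes through the bounds (\ref{eqAkira9})--(\ref{eqAkira10}) on the non-vanishing bubble, plus geometric summation of the $N$-factor product. This latter step, which reduces the present Proposition~\ref{propIsingDiagram} to the displayed estimates on $\sum_x\pi^{\sss(N)}_\Lambda(x)$ and $\sum_x[1-\cos(k\cdot x)]\pi^{\sss(N)}_\Lambda(x)$, does not require re-opening the lace expansion itself and is carried out in Appendix~\ref{appendixBoundsIsing} using the key inequality (\ref{eqSumCosBound}) for distributing the factor $1-\cos(k\cdot x)$ across a convolution, in place of the moment bound $|x|^2$ used in \cite{Sakai07}.
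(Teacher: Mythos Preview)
Your identification of this proposition as a direct citation of \cite[Prop.~4.1]{Sakai07}, and your sketch of Sakai's argument via the random-current representation, inclusion-exclusion on pivotal bonds, and source-switching, is accurate and matches how the paper treats the result: it is quoted as a black box, not re-proved.

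Two points deserve correction. First, your ``third step'' is misplaced: the bubble bounds \refeq{Akira9}--\refeq{Akira10} and the inequality \refeq{BoundGtilde} play no role in establishing \refeq{DiagrammaticBound} itself. That bound is a purely combinatorial diagrammatic majorant coming out of the expansion; the bubble estimates enter only afterwards, in the proof of Proposition~\ref{propIsingDiagram}, exactly as you describe in your final paragraph. Second, you do not identify the mechanism behind the phrase ``uniformly in $\Lambda$'', which is the one substantive addition the paper makes to Sakai's finite-volume statement. Sakai proves \refeq{DiagrammaticBound} with both sides defined on the finite graph $\Lambda$. The paper observes that, by Griffiths' second inequality, $G_z^\Lambda$ is monotone increasing in $\Lambda$, hence so are the diagrams $P'$, $Q'$, $Q''$ built from it; one may therefore replace the right-hand side by its thermodynamic limit and obtain a bound valid for all $\Lambda$ simultaneously. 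This monotonicity argument is essential because $\pi_\Lambda^{\sss(N)}(x)$ is \emph{not} itself monotone in $\Lambda$, so one cannot pass to the limit on the left directly.
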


It should be noted that Sakai \cite{Sakai07} proved the bound (\ref{eqDiagrammaticBound}) on a finite graph $\Lambda$, where in particular all quantities on the right hand side are defined on $\Lambda$.
By Griffith's second inequality \cite{Griff67a}, the two-point correlation function $G_z$ is monotonically increasing in $\Lambda$, and thus so are $P'$, $Q'$ and $Q''$. Hence, the right hand side in (\ref{eqDiagrammaticBound}) is monotonically increasing in $\Lambda$, and we consider the thermodynamic limit $\Lambda\nearrow\Zd$ as a uniform upper bound on $\pi^\sN_\Lambda(x)$.
However, it is not obvious how to obtain the thermodynamic limit on the left hand side directly, since the quantities $\pi_\Lambda^\sN(x)$ are \emph{not} monotone in $\Lambda$.

\vspace{.5em}
\noindent
\emph{Proof of (\ref{eqIsingDiagramResult1}).}
We first show that $1\le \sum_x\pi^{\sss (0)}_\Lambda(x)\le1+O(\beta^2)$.
By the definition of $\pi^{\sss (0)}_\Lambda(x)$ and (\ref{eqAkira11}), $\delta_{0,x}\le\pi^{\sss (0)}_\Lambda(x)\le G(x)^3$.
Whence
\begin{equation}\label{eqAkira0}
    1\le\sum_x\pi^{\sss (0)}_\Lambda(x)\le1+\sum_{x\neq0}G(x)^3
    \le 1+\left(\sup_{x\neq0}G(x)\right)\sum_{x\neq0}\tilde G^2(x).
\end{equation}
The term $\sum_{x\neq0}\tilde G^2(x)$ is bounded above by a non-vanishing bubble $\tilde B$, yielding a factor $O(\beta)$ by (\ref{eqAkira9}).
The term $\sup_{x\neq0}G(x)$ can be bounded as follows.
We first apply (\ref{eqSingleStepBound}), to obtain
\begin{equation}\label{eqAkira5}
    \sup_{x\neq0}G(x)\le \tau\|D\|_\infty+\|\tau D\ast \tilde G\|_\infty.
\end{equation}
The first summand is bounded by $K\beta$, by our bound on $f_1$ and (\ref{eqBeta0}).
Furthermore,
$\|\tau D\ast \tilde G\|_\infty\le 4K^3\beta$
by a calculation similar to (\ref{eqAkira9}) and using $1\le2[1-\Dk]^{-1}$.
We thus obtain the bound on $\sum_x\pi^{\sss (0)}_\Lambda(x)$.

We next consider the bound on $\sum_x\pi^\sN_\Lambda(x)$ for $N\ge1$.
Here is a diagrammatic representation of the bounds on $\sum_x\pi^{\sN}_\Lambda(x)$ for $N=3$:
\begin{center}
{\includegraphics[scale=.42]{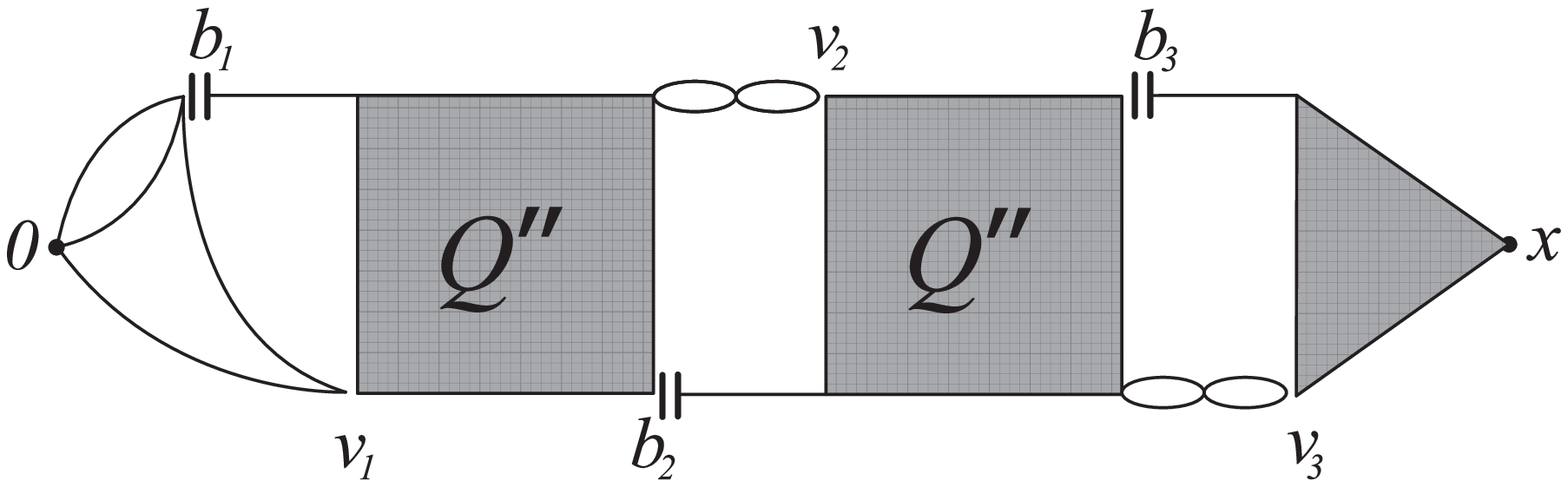}}
\end{center}
where all vertices $v_1$, $v_2$, $v_3$ and bonds $b_1$, $b_2$, $b_3$ are summed over.
Since the diagrammatic bound (\ref{eqDiagrammaticBound}) implies
\begin{equation}\label{eqAkira1}
    \sum_x\pi^\sN_\Lambda(x)
    \le
    \left(\sum_{v,x}P'^{\sss (0)}_v(0,x)\right)\!
    \Bigg(\sup_y\sum_{w,v,x}\tau D(w-y)Q''_{0,v}(w,x)\Bigg)^{\!N-1}
    \left(\sup_y\sum_{w,x}\tau D(w-y)Q'_{0}(w,x)\right),
\end{equation}
it is sufficient to show that
{\begin{enumerate}
  \item[(i)] $\sum_{v,x}P'^{\sss (0)}_v(0,x)\le O(1)$,
  \item[(ii)] $\sup_y\sum_{w,x}\tau D(w-y)Q'_{0}(w,x)\le O(\beta)$,
  \item[(iii)] $\sup_y\sum_{w,v,x}\tau D(w-y)Q''_{0,v}(w,x)\le O(\beta)$.
\end{enumerate}}
We will now prove these bounds one at a time.

(i) We first show that
    $\sum_{v,x}P'^{\sss (0)}_v(0,x)$
is uniformly bounded.
Indeed, by (\ref{eqAkira19}) and (\ref{eqAkira11}),
\begin{equation}\label{eqAkira17}
    \sum_{v,x}P'^{\sss (0)}_v(0,x)
    = \sum_{v,x} G(x)^2 G(v) G(v-x)
    \le \left(\sup_y \sum_v G(v) G(v-y)\right) \sum_{x} G(x)^2
    \le B^2.
\end{equation}

(ii) We bound
\begin{equation}\label{eqAkira3}
    \sum_{w,x}\tau D(w-y)Q'_{0}(w,x)
    =\sum_{u,x}\left(\sum_w\tau D(w-y)\big(\delta_{w,u}+\tilde G(u-w)\big)\right)P'_{0}(u,x),
\end{equation}
cf.\ (\ref{e:Q'-def}).
The factor $\beta$ comes from the nonzero line segment
$\sum_w\tau D(w-y)\big(\delta_{w,u}+\tilde G(u-w)\big)$,
as we have seen in the discussion around (\ref{eqAkira5}).

It remains to show that $\sum_{u,x}P'_{0}(u,x)=\sum_{u,x}\sum_{j=0}^\infty P'^{\sss (j)}_{0}(u,x)$ is uniformly bounded.
\begin{claim}[Bound on $P'$]\label{lemmaBoundPprime}
\begin{equation}\label{eqAkira29}
    \sum_{u,x}P'_{0}(u,x) \le O(1).
\end{equation}
\end{claim}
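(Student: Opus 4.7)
The plan is to prove the claim by decomposing $\sum_{u,x} P'_0(u,x) = \sum_{j\ge 0} \sum_{u,x} P_0^{\prime(j)}(u,x)$, bounding the $j=0$ contribution by a constant, and dominating each $j\ge 1$ contribution by $(C\beta)^j$ for some constant $C$ so that the tail sums geometrically to $O(\beta)$.

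For the leading term $j=0$, the definition (\ref{eqAkira11}) gives $P_u^{\prime(0)}(0,x)=G(0,x)^2\,G(0,u)\,G(u,x)$. Summing over $u$ first collapses the two $u$-lines into $(G\ast G)(x)\le B$ by the open-bubble bound (\ref{eqAkira19}), and summing over $x$ then yields $\sum_x G(x)^2=B$. Hence the $j=0$ contribution is at most $B^2\le 1+O(\beta)$ by (\ref{eqAkira10}).

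For $j\ge 1$, observe that $P_u^{\prime(j)}$ is obtained from the chain-of-bubbles diagram $P^{(j)}$ by replacing, in turn, each of its $2j-1$ two-point function lines $G(z,z')$ by the pair $G(z,u)\,G(u,z')$, and summing these $2j-1$ decorated diagrams. Summing out $u$ converts each such insertion back into $(G\ast G)(z,z')\le B$ by (\ref{eqAkira19}), giving
\begin{equation*}
    \sum_u P_u^{\prime(j)}(0,x)\le (2j-1)\,B\cdot P^{(j)}(0,x).
\end{equation*}
It then remains to show $\sum_x P^{(j)}(0,x)\le (C\beta)^j$. This follows by iteratively summing out the intermediate vertices of the ladder-structure of $P^{(j)}$: each of the $j$ bubble factors $\tilde\psi(v_i,v_i')$ contributes at most $\tilde\psi\le 2\tilde B=O(\beta)$ via (\ref{eqAkira38}), while the connecting $G$-lines are absorbed using the uniform open-bubble bound $(G\ast G)\le B=O(1)$ from (\ref{eqAkira19}).

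Combining, $\sum_{u,x}P'_0(u,x)\le B^2+B\sum_{j\ge 1}(2j-1)(C\beta)^j$, which is $O(1)$ for $\beta$ sufficiently small, as the polynomial factor $2j-1$ is swallowed by the geometric decay. The main technical obstacle is the inductive control of the chain-of-bubbles bound on $\sum_x P^{(j)}(0,x)$; this requires carefully matching, at each rung of the ladder, a $\tilde\psi$-bubble contributing $O(\beta)$ with $G$-line connections dominated by $O(1)$ open bubbles, so that the product telescopes into $(C\beta)^j$. This is a standard diagrammatic estimate in the lace-expansion literature, but one has to be attentive to the fact that the decorated $\tilde\psi$ factors and the separating $G$-lines must be handled in an alternating fashion so that each $\beta$-factor is extracted cleanly.
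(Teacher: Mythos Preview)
Your argument has a genuine gap at the displayed inequality
\[
    \sum_u P_u^{\prime(j)}(0,x)\le (2j-1)\,B\cdot P^{(j)}(0,x).
\]
Summing out $u$ turns each inserted pair $G(z,u)G(u,z')$ into $(G\ast G)(z,z')$, and the open-bubble bound \eqref{eqAkira19} then gives $(G\ast G)(z,z')\le B$, a \emph{constant}. What you obtain is therefore $B$ times the diagram $P^{(j)}$ with that line \emph{removed}, not $B\cdot P^{(j)}(0,x)$. To get your displayed inequality you would need the pointwise bound $(G\ast G)(z,z')\le B\,G(z,z')$, and this is false: already for the random-walk Green's function $C_1$ in dimension $d>4$ one has $(C_1\ast C_1)(x)/C_1(x)\asymp |x|^2\to\infty$. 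Hence the reduction to $\sum_x P^{(j)}(0,x)$ does not go through, and the two halves of your argument do not connect.

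The paper's proof bypasses $P^{(j)}$ altogether and bounds $\sum_{u,x} P'^{(j)}_0(u,x)$ directly: it uses translation invariance to anchor one vertex at a time, then peels off the ladder rung by rung, each step producing a factor $(\text{open bubble})\times(\text{bubble-chain sum})\le B\,\tilde\psi=O(\beta)$, arriving at $(2j-1)\,O(\beta)^j$. Your second step (``iteratively summing out the intermediate vertices of the ladder-structure'') is exactly this decomposition; the repair is to apply it directly to each of the $2j-1$ diagrams obtained after the $u$-sum (i.e.\ to the diagrams with one line replaced by $(G\ast G)$), rather than first trying to factor out $B$ and reduce to $P^{(j)}$.
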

\proof
To this end, it suffices to show
\begin{equation}\label{eqAkira16}
    \sum_{u,x}P'^{\sss (j)}_{0}(u,x)\le (2j-1)\,O(\beta)^j,\qquad (j\ge1),
\end{equation}
since the case $j=0$ has been treated in (\ref{eqAkira17}).
The bound (\ref{eqAkira16}) will be achieved by decomposing the diagrams describing $P'^{\sss (j)}$ into bubble diagrams, and we demonstrate this for the case $j=4$ explicitly.

Recall from (\ref{eqAkira15}) that
\begin{equation}\label{eqAkira18}
P^{{\sss(4)}}(u,x)=\raisebox{-14pt}{\includegraphics[scale=.3]{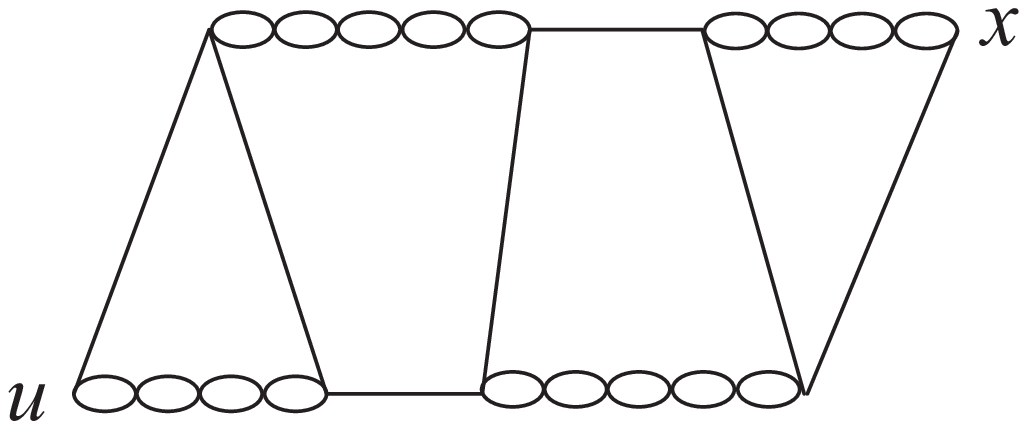}\;\;,}
\end{equation}
and we obtain $P^{\prime{\sss(4)}}(u,x)$ from $P^{{\sss(4)}}(u,x)$ by replacing one of the $7(=2j-1)$ factors of the form $G(u,v)$ by $\sum_w G(u,w)\,G(w,v)$.
In terms of diagrams, there is an extra vertex added to either of the $7$ straight lines in (\ref{eqAkira18}).
This explains the factor $(2j-1)$ in (\ref{eqAkira16}).

In case this extra vertex falls to one of the horizontal lines, say the lower one, we bound as follows.
We first extend our diagrammatical notation in the following way: we mark vertices that are summed over by a full dot, and fixed vertices (possibly with a supremum) are marked with an open dot, i.e.,
\begin{equation*}
    \sum_{u,x}\raisebox{-14pt}{\includegraphics[scale=.3]{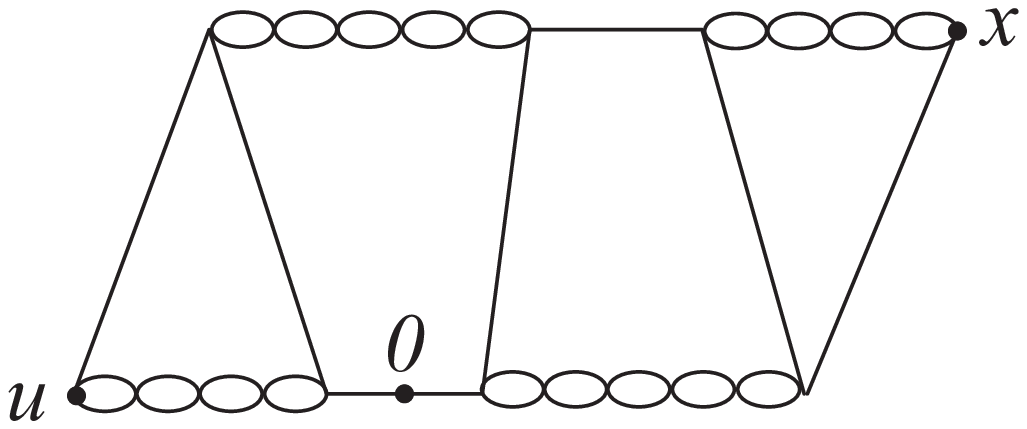}}
    =\raisebox{-14pt}{\includegraphics[scale=.3]{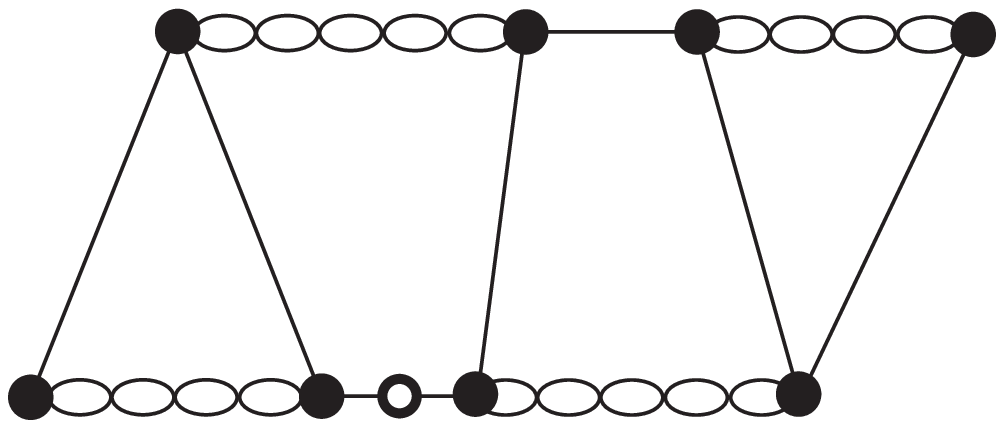}\;\;.}
\end{equation*}
By multiple use of translation invariance of the model, we obtain
\begin{eqnarray}
  \raisebox{-14pt}{\includegraphics[scale=.3]{P4p1-1}}
  &=& \sum_{\substack{x_1,x_2,x_3,x_4,\\x_5,x_6,x_7,x_8}}
    \qquad\raisebox{-20pt}{\includegraphics[scale=.3]{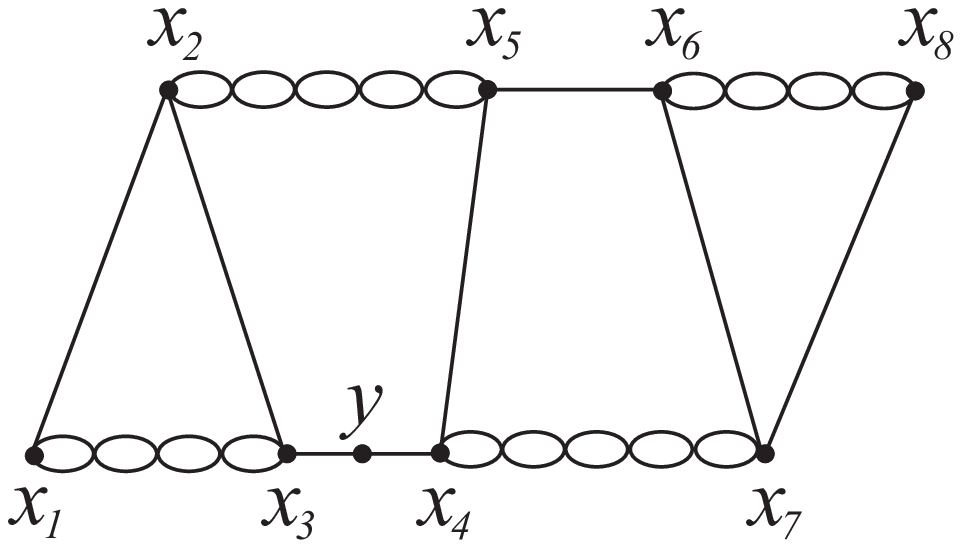}}\nnb
  &=& \sum_{\substack{x_1,x_2,x_3,x_4,\\x_5,x_6,x_7,x_8}}
    G(x_1,x_2)\,G(x_2,x_3)\,G(x_3,y)\,G(y,x_4)\,G(x_4,x_5)\,G(x_5,x_6)\nnb
  && \hspace{4em}{}\times G(x_6,x_7)\,G(x_7,x_8)\,\tilde\psi(x_1,x_3)\,\tilde\psi(x_2,x_5)\,\tilde\psi(x_4,x_7)\,\tilde\psi(x_6,x_8)\nnb
  &=& \sum_{\substack{x_1,x_2,y,x_4,\\x_5,x_6,x_7,x_8}}\cdots\text{ (expression as above with $x_3$ fixed) }\nnb
  &\le& \left(\sum_{x_1}\tilde\psi(x_1,x_3)\right)
    \left(\sup_{\bar x_1}\sum_{x_2}G(\bar x_1,x_2)\,G(x_2,x_3)\right)\nnb
  && {}\times
    \left(\sup_{\bar x_2}\sum_{x_5}\tilde\psi(\bar x_2,x_5)\right)
    \left(\sup_{\bar x_4}\sum_{y}G(x_3,y)\,G(y,\bar x_4)\right)\nnb
  && {}\times
    \left(\sup_{x_5}\sum_{x_4,x_6,x_7,x_8} \!\!\!\!G(x_4,x_5)\,G(x_5,x_6)\,G(x_6,x_7)\,G(x_7,x_8)\,\tilde\psi(x_4,x_7)\,\tilde\psi(x_6,x_8)\right)
    \nnb
  &=& \raisebox{-14pt}{\includegraphics[scale=.3]{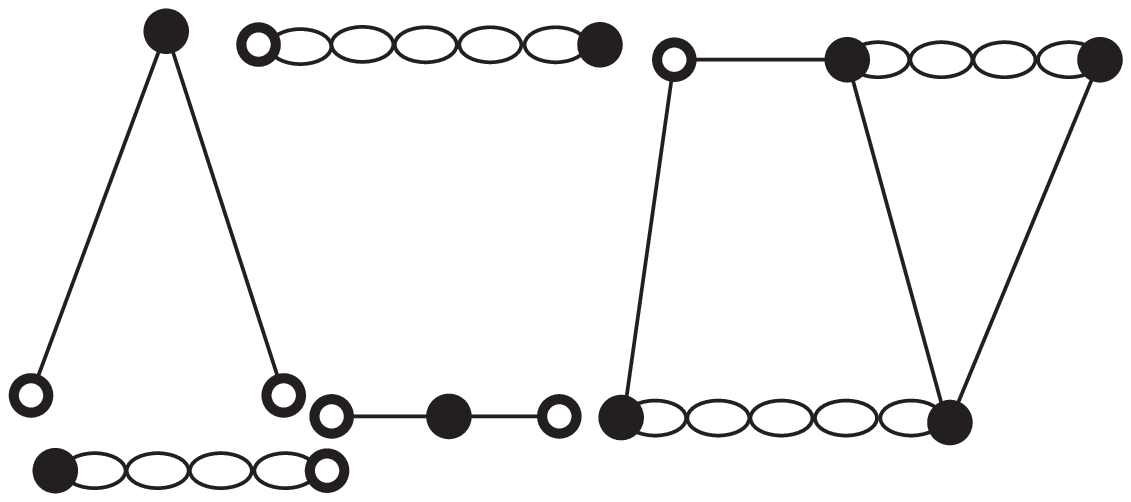}}
\end{eqnarray}
For the remaining component on the right hand side, we again use translation invariance and bound further as
\begin{equation}\label{eqAkira31}
    \raisebox{-14pt}{\includegraphics[scale=.3]{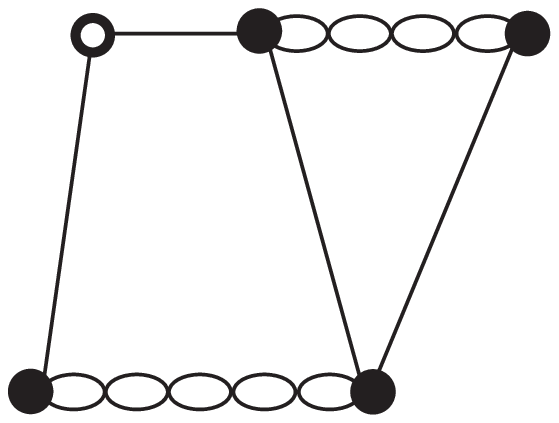}}
    =\raisebox{-14pt}{\includegraphics[scale=.3]{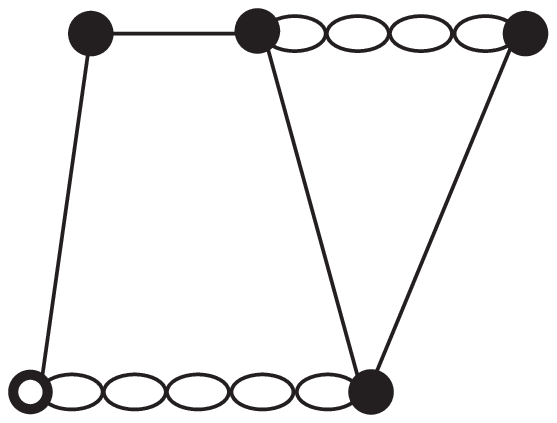}}
    \le\raisebox{-14pt}{\includegraphics[scale=.3]{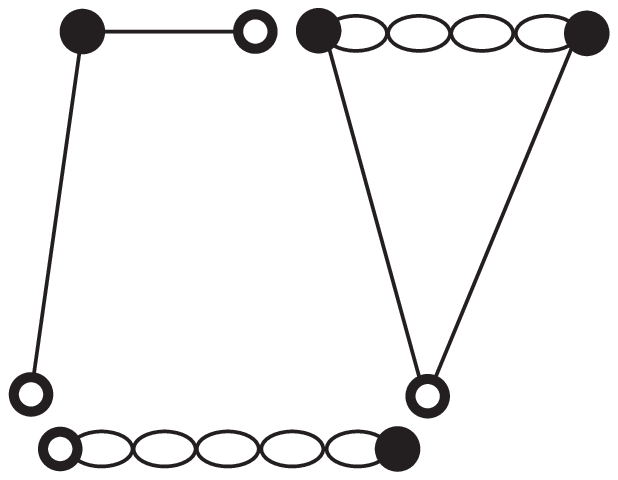}\;\;,}
    \qquad
    \raisebox{-14pt}{\includegraphics[scale=.3]{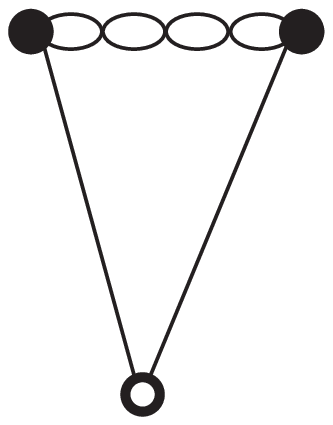}}
    =\raisebox{-14pt}{\includegraphics[scale=.3]{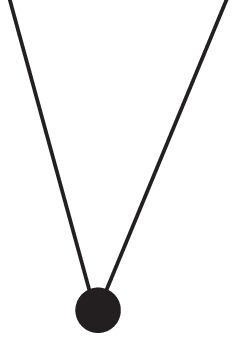}}
    \le\raisebox{-14pt}{\includegraphics[scale=.3]{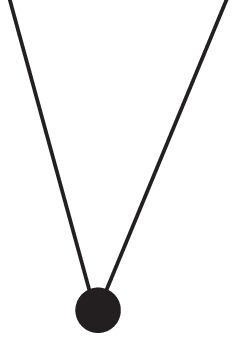}}
    \le \tilde\psi\,B.
\end{equation}
Hence,
\begin{equation}
    \raisebox{-14pt}{\includegraphics[scale=.3]{P4p1-1}}
    \le B^4\tilde\psi^4,
\end{equation}
and this can be made smaller than $O(\beta)^4$, cf.\ (\ref{eqAkira10}) and (\ref{eqAkira38}).

However, if the extra vertex falls to one of the vertical lines, then the details are slightly different:
\begin{equation}\label{eqAkira24}
    \raisebox{-14pt}{\includegraphics[scale=.3]{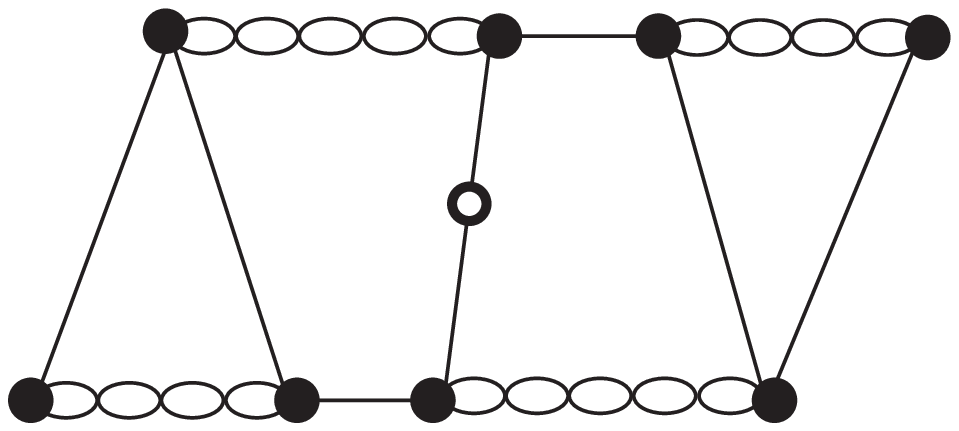}}
    =\raisebox{-14pt}{\includegraphics[scale=.3]{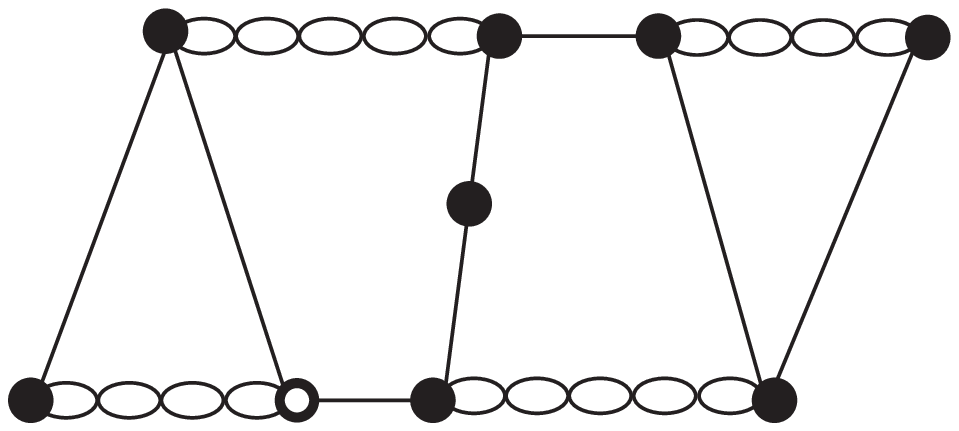}}
    \le\raisebox{-14pt}{\includegraphics[scale=.3]{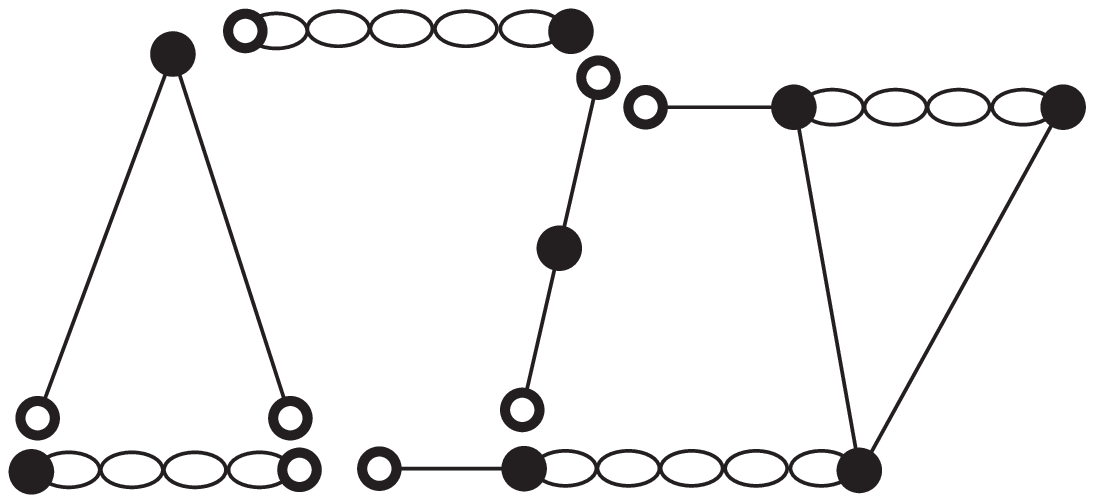}}
    \le B^2\,\tilde\psi^2\raisebox{-14pt}{\includegraphics[scale=.3]{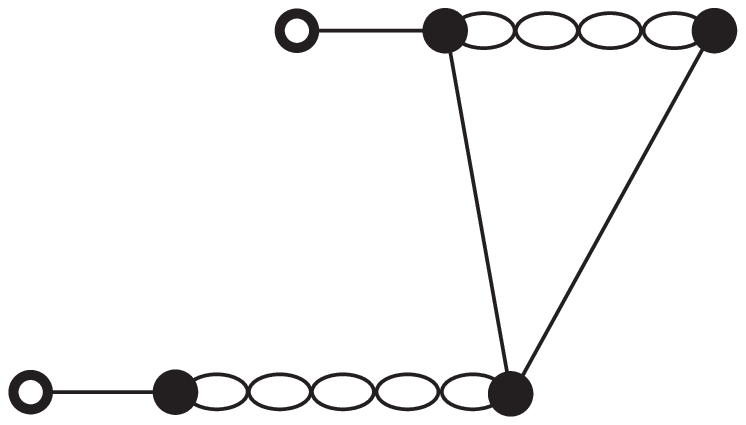}\;\;.}
\end{equation}
The remaining diagram in (\ref{eqAkira24}) is bounded by multiple use of translation invariance, as we will show now:
\begin{eqnarray}
  \raisebox{-14pt}{\includegraphics[scale=.3]{P4p2-5}}
  &=&\sup_w\sum_{v,x,y,z}\raisebox{-30pt}{\includegraphics[scale=.4]{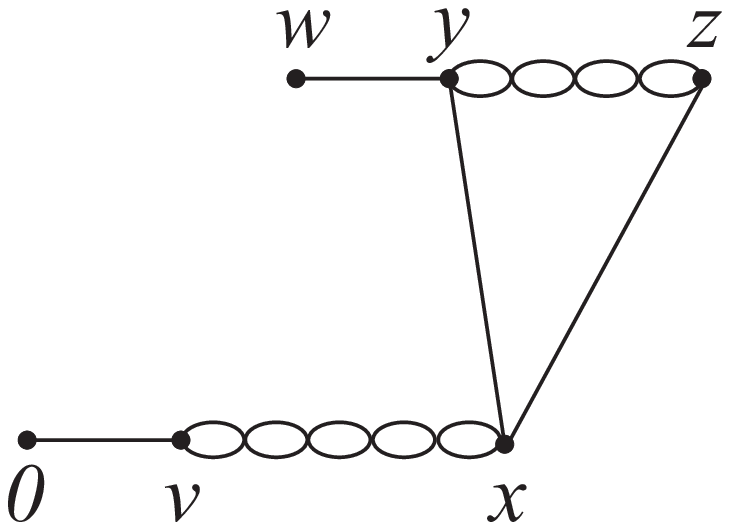}}
  = \sup_w\sum_{x,y,z,v}\raisebox{-30pt}{\includegraphics[scale=.4]{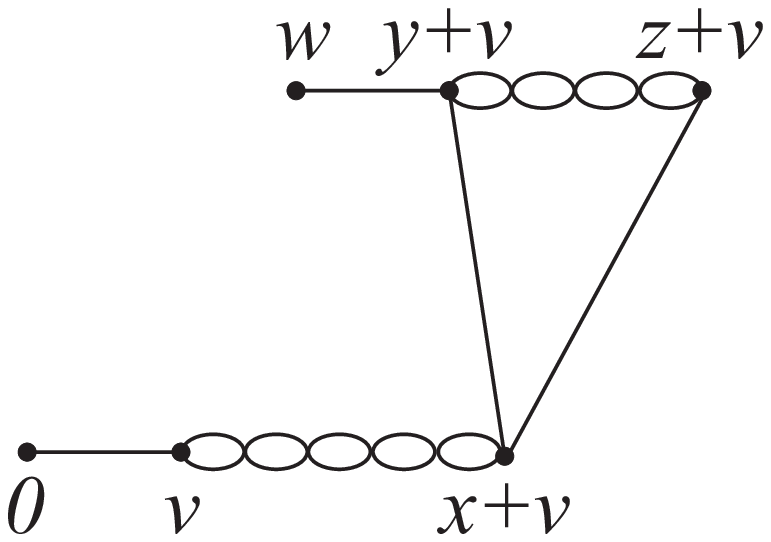}}\nnb
  \label{eqAkira30}
  &\le&
  \left(\sup_{w,y}\sum_{v}\raisebox{-12pt}{\includegraphics[scale=.4]{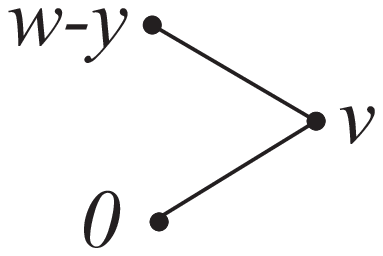}}\right)
  \left(\sum_{x,y,z}\raisebox{-30pt}{\includegraphics[scale=.4]{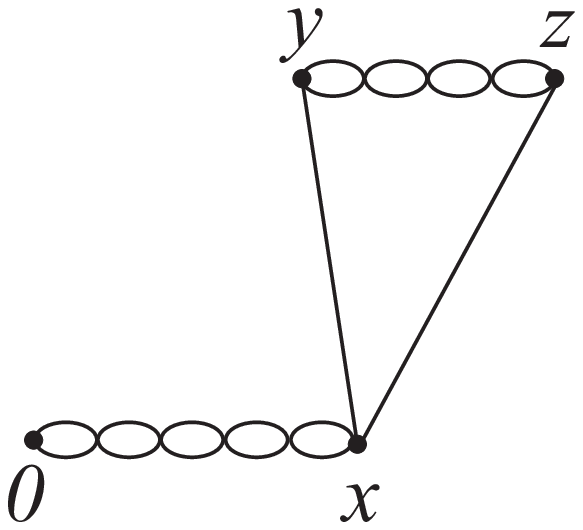}}\right)
  \le B^2\cdot\tilde\psi^2.
\end{eqnarray}

This proves (\ref{eqAkira16}) for $j=4$.
The cases $j\nin\{0,4\}$ are omitted, since the same methods will lead to the desired bounds.
\qed

\vspace{.5em}
(iii) We now turn to the bounds involving $Q''$, i.e., we prove
\begin{equation}\label{eqAkira20}
    \sup_y\sum_{w,v,x}\tau D(w-y)Q''_{0,v}(w,x)\le O(\beta).
\end{equation}
Recalling the definition of $Q''$ in (\ref{e:Q''-def}), (\ref{eqAkira20}) is established once we have shown \begin{equation}\label{eqAkira21}
    \sup_y\sum_{w,v,v',z,x}\tau D(w-y)
    \big(\delta_{w,v'}+\tilde G(w,v')\big)\,\tilde G(v',z)\,P'_{0}(z,x)\,\psi(v',v)
    \le O(\beta)
\end{equation}
and
\begin{equation}\label{eqAkira22}
    \sup_y\sum_{w,v,z,x}\tau D(w-y)
    \big(\delta_{w,z}+\tilde G(w,z)\big)P''_{0,v}(z,x)
    \le O(\beta).
\end{equation}
A decomposition of the left hand side of (\ref{eqAkira21}) yields as an upper bound
\begin{equation}\label{eqAkira23}
    \left(\sup_z\sum_{w,v'}\tau D(w-y)\big(\delta_{w,v'}+\tilde G(w,v')\big)\,\tilde G(v',z)\right)
    \left(\sup_{v'}\sum_v\psi(v',v)\right)
    \left(\sum_{z,x}P'_{0}(z,x)\right),
\end{equation}
where the first term is bounded by $O(\beta)$, the second term is bounded by $1+\tilde\psi= O(1)$ and the final term is bounded by $O(1)$, by Claim \ref{lemmaBoundPprime}.

It thus remains to show the following claim:
\begin{claim}[Bound on $P''$]\label{lemmaBoundPprimeprime}
The estimate (\ref{eqAkira22}) is true.
\end{claim}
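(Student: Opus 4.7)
The plan is to decompose $P''_{0,v}(z,x)=\sum_{j\ge 0}P''^{\sss(j)}_{0,v}(z,x)$ and bound the contribution of each level $j$. First, summing out the ``non-zero line segment''
\begin{equation*}
L(y,z):=\sum_w \tau D(w-y)\bigl(\delta_{w,z}+\tilde G(w,z)\bigr)=\tau D(z-y)+\tau(D\ast\tilde G)(z-y)
\end{equation*}
reduces the claim to showing $\sup_y\sum_{z,v,x}L(y,z)P''_{0,v}(z,x)\le O(\beta)$.

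For the leading term $j=0$, starting from (\ref{eqAkira12}), I would contract bubbles successively: the sum $\sum_v\psi(v',v)=1+\tilde\psi=O(1)$ by (\ref{eqAkira38}); the sum $\sum_{v'}G(z,v')G(v',x)=(G\ast G)(z,x)\le B$ by (\ref{eqAkira19}); and the sum $\sum_x G(z,x)G(0,x)\le B$ uniformly in $z$. The task then reduces to showing
\begin{equation*}
\sup_y\sum_z L(y,z)G(z,0)=\sup_y\bigl[\tau(D\ast G)(y)+\tau(D\ast\tilde G\ast G)(y)\bigr]\le O(\beta).
\end{equation*}
The first summand is $O(\beta)$ via the decomposition $G\le\delta+\tilde G$ combined with (\ref{eqAkira5}). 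The second summand has Fourier transform $\tau^2\hat D(k)^2\hat G(k)^2$; bounding the Fourier inversion in absolute value and using the bootstrap hypothesis $\hat G(k)\le K\hat C_{\lambda_z}(k)\le 2K/[1-\hat D(k)]$ together with Assumption \ref{assumptionBeta} for $s=2$ yields
\begin{equation*}
\tau(D\ast\tilde G\ast G)(y)\le\int\tau^2\hat D(k)^2\hat G(k)^2\,\dk\le 4K^4\int\frac{\hat D(k)^2}{[1-\hat D(k)]^2}\,\dk\le O(\beta).
\end{equation*}

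For $j\ge 1$, the diagram $P''^{\sss(j)}_{0,v}(z,x)$ inherits the chain-of-$j$-bubbles structure from $P'^{\sss(j)}$ together with an additional $\psi(v',v)$-attachment linking the main chain to the auxiliary variable $v$. Summation over $v$ collapses that attachment to $1+\tilde\psi=O(1)$, and the remaining diagram is bounded by the same bubble-decomposition technique employed in the proof of Claim \ref{lemmaBoundPprime}: each of the $j$ bubbles in the chain contributes a factor $O(\beta)$ via $\tilde B$ or $\tilde\psi$. This yields $\sum_{z,v,x}P''^{\sss(j)}_{0,v}(z,x)\le(2j{-}1)\,O(\beta)^j$. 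Combined with the overall $O(\beta)$ bound on $L$, the $j$-th term contributes $O(\beta)^{j+1}$, and the resulting geometric series sums to $O(\beta^2)=O(\beta)$ when $\beta$ is small enough.

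The main obstacle is the $j=0$ case: after the bubble contractions, $P''^{\sss(0)}_{0,v}(z,x)$ has a theta-graph structure (two triangles sharing the edge $z$-$x$) that cannot be bounded by the bubble condition alone, since such a bound would require a triangle-diagram bound that is not available in the Ising setting $s=2$. The resolution is to never separate the $\tilde G$ factor inside $L$ from the leg $G(z,0)$: their combined contribution reduces via Fourier inversion to the random-walk bubble integral $\int\hat D^2/[1{-}\hat D]^2\,\dk$, which is precisely the quantity that Assumption \ref{assumptionBeta} with $s=2$ controls.
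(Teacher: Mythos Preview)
Your treatment of the $j=0$ term is correct and matches the paper's: both extract the factor $O(\beta)$ from the non-zero line segment $L$ paired with the leg $G(z,0)$, then close two open bubbles and one $\psi$-chain for the remaining factor $B^2(1+\tilde\psi)$.

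For $j\ge1$, however, your description of $P''^{(j)}$ is incomplete. It is \emph{not} simply $P'^{(j)}$ with a $\psi(v',v)$-attachment. By definition (see the replacement rule preceding \refeq{P''1-def}), $P''^{(j)}_{u,v}$ modifies \emph{two} factors of $P^{(j)}$ simultaneously: one of the $2j-1$ main $G$-factors is opened at $u$ (as in $P'^{(j)}$), \emph{and} one of the $\tilde G$-factors inside the $\tilde\psi$-chains is opened, with the $\psi(v',v)$-tail hanging from that second opening; there is also a symmetric term with the roles of $u$ and $v'$ swapped. So after summing out $v$ you are not left with $P'^{(j)}$, but with a diagram in which one of the $\tilde\psi$-chains has been internally split. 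Your combinatorial prefactor $(2j-1)$ does not account for the additional choice of which $\tilde G$-factor is modified, and the bubble decomposition must handle the modified chain separately. The technique of Claim~\ref{lemmaBoundPprime} can indeed be adapted, but not by direct citation.

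The paper proceeds differently: it never separates $L$ from $P''^{(j)}$, instead decomposing the full diagram $L\ast P''^{(j)}$ jointly (shown explicitly for $j=0,1$ in the numbered decompositions) and claiming only the weaker bound $O(\beta)^{j\vee1}$ rather than your $O(\beta)^{j+1}$. Your separation strategy is in principle legitimate for $j\ge1$---the sum $\sum_{z,v,x}P''^{(j)}_{0,v}(z,x)$ is finite there, unlike for $j=0$, because the $\tilde\psi$-factors restore summability---but this needs to be verified against the actual two-modification structure of $P''^{(j)}$, not the one-modification structure you described.
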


\proof
In our pictorial representation, (\ref{eqAkira22}) can be expressed like
\begin{equation}
    \raisebox{-12pt}{\includegraphics[scale=.3]{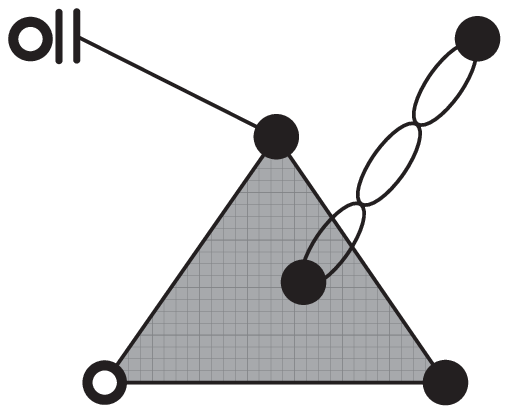}}
    \le O(\beta).
\end{equation}
Similarly to the proof of (\ref{eqAkira29}), it is sufficient to show
\begin{equation}\label{eqAkira25}
    \sup_y
    \sum_{w,v,z,x}\tau D(w-y)
    \big(\delta_{w,z}+\tilde G(w,z)\big)P''^{\sss (j)}_{0,v}(z,x)
    \le O(\beta)^{j\vee1}
\end{equation}
for $j=0,1,2,\dots$.
We explicitly perform this bound for $j=0,1$, and omit the details for $j\ge2$.

For $j=0$, we bound
\begin{equation}
    \raisebox{-12pt}{\includegraphics[scale=.3]{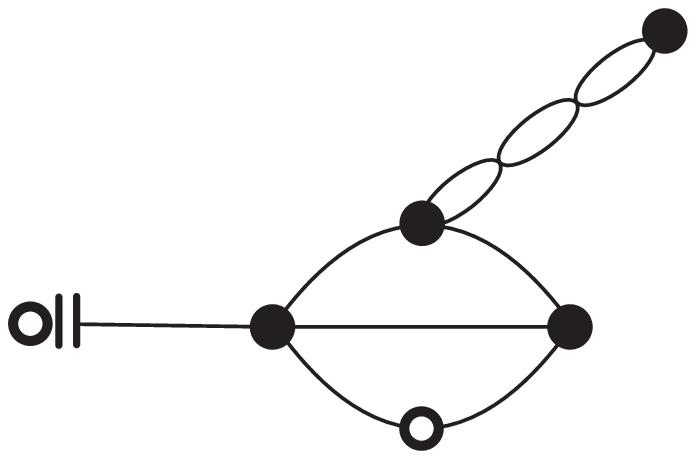}}\quad
    \le \quad\raisebox{-12pt}{\includegraphics[scale=.3]{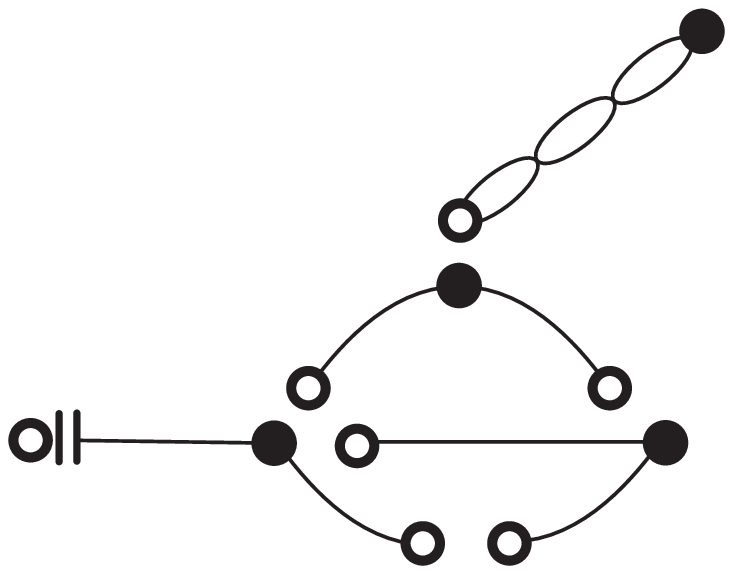}\;\;,}
\end{equation}
i.e.,
\begin{equation}
    \sup_y
    \sum_{w,v,z,x}\tau D(w-y)
    \big(\delta_{w,z}+\tilde G(w,z)\big)P''^{\sss (0)}_{0,v}(z,x)
    \le O(\beta)\,B^2\,(1+\tilde\psi),
\end{equation}
where the $O(\beta)$-factor arises from the open bubble involving the extra vertex, and the chain of bubbles hanging off from the top produces a factor $1+\tilde\psi$.

For $j=1$ we proceed similarly by recalling the definition of $P''^{\sss (1)}$ in (\ref{e:P''1-def}) and bound
\begin{eqnarray*}
    \sup_y
    \sum_{w,v,z,x}\tau D(w-y)
    \big(\delta_{w,z}+\tilde G(w,z)\big)P''^{\sss (1)}_{0,v}(z,x)
    &=&\raisebox{-12pt}{\includegraphics[scale=.3]{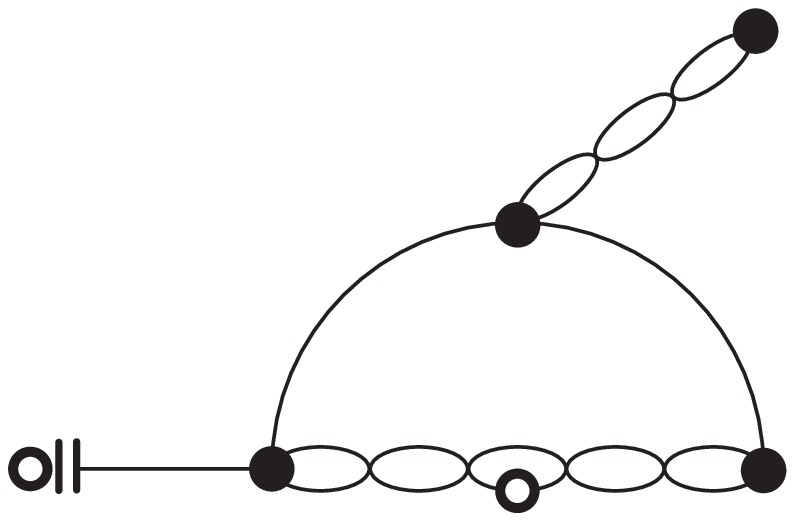}}\quad
      +\quad\raisebox{-12pt}{\includegraphics[scale=.3]{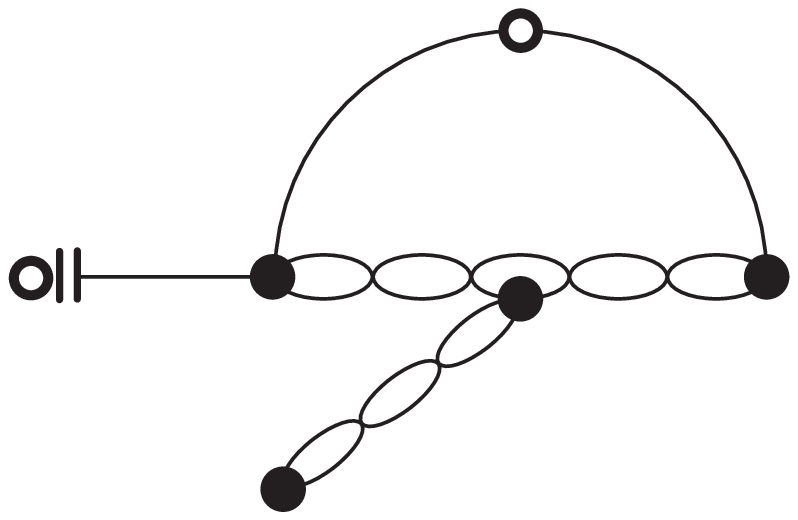}}\\
    &\le& \raisebox{-12pt}{\includegraphics[scale=.3]{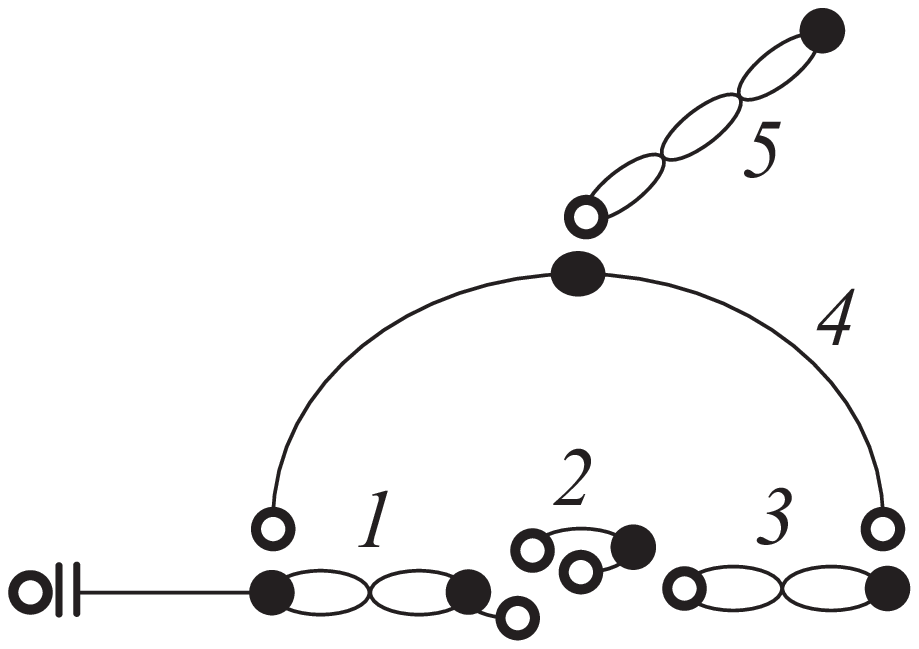}}\quad
      +\quad\raisebox{-12pt}{\includegraphics[scale=.3]{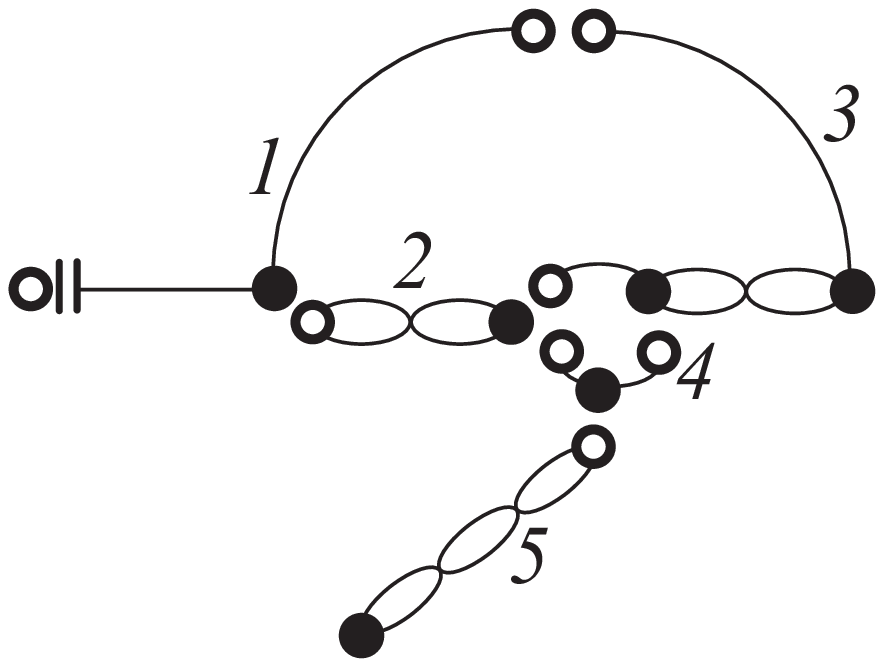}\;\;,}
\end{eqnarray*}
where the numbers indicate the order in the decomposition.
A calculation similar to (\ref{eqAkira30}) shows that
$\raisebox{1.3pt}{\includegraphics[scale=.3]{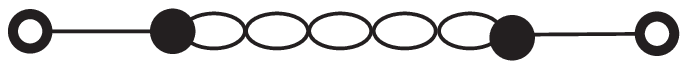}}
\le(\,\raisebox{1.3pt}{\includegraphics[scale=.3]{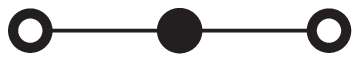}}\,)\,(\,\raisebox{1.3pt}{\includegraphics[scale=.3]{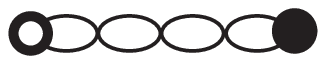}}\,)
=B(1+\tilde\psi)$ (if the initial two-point function is dashed, then we obtain $\tilde B(1+\tilde\psi)$ as an upper bound).
Hence (\ref{eqAkira25}) for $j=1$ follows.
The terms for $j\ge2$ are bounded in the same fashion.
\qed

This completes the proof of (\ref{eqIsingDiagramResult1}).

\vspace{.5em}
\noindent
\emph{Proof of (\ref{eqIsingDiagramResult2}).}
We now turn towards the proof of the bound (\ref{eqIsingDiagramResult2}) in Proposition \ref{propIsingDiagram}, which we restate here for convenience:
    \begin{equation*}
        \sum_x[1-\cos(k\cdot x)]\pi^\sN_{\Lambda}(x)\le
        \Ci(\bar c_K\beta)^{N\vee1}.
    \end{equation*}
We start by considering the case $N=0$.
By (\ref{eqDiagrammaticBound}) and (\ref{eqAkira11}),
\begin{equation}\label{eqAkira32}
    \sum_x\left[1-\cos(k\cdot x)\right]\pi^{\sss (0)}_{\Lambda}(x)
    \le \sum_{x\neq0}\left[1-\cos(k\cdot x)\right]G^3(x).
\end{equation}
This is bounded above by
\begin{equation}\label{eqAkira33}
    \left(\sup_x\left[1-\cos(k\cdot x)\right]G(x)\right)\left(\sum_{x\neq0}G^2(x)\right)
    \le \left(\sup_x\left[1-\cos(k\cdot x)\right]G(x)\right)\tilde B.
\end{equation}
Then the desired bound follows from (\ref{eqAkira10}) and the following lemma:
\begin{lemma}\label{lemmaCosKxGBound}
If for some model we have that $f(z)\le K$ for some $z\in(0,z_c)$, $K>1$, then
\begin{equation}
    \sup_x \left[1-\cos(k\cdot x)\right] G(x)
    \le 300 \,K\, \Ci (C_{\lambda_z}\ast C_{\lambda_z})(0).
\end{equation}
\end{lemma}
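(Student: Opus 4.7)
The strategy is to express the left-hand side as an inverse Fourier transform of the discrete second difference $\Delta_k \hat G_z(l)$, then exploit the bootstrap hypothesis $f_3(z)\le K$ which directly bounds $\Delta_k \hat G_z$ by $U_{\lambda_z}$.

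First, I would compute, using $1-\cos(k\cdot x)=1-\tfrac12 e^{ik\cdot x}-\tfrac12 e^{-ik\cdot x}$ and the inversion formula $G_z(x)=\int_{\Td} \hat G_z(l)\,e^{-il\cdot x}\,dl/(2\pi)^d$, that
\begin{equation*}
    [1-\cos(k\cdot x)]\,G_z(x) \;=\; -\tfrac12\int_{\Td}\bigl(\hat G_z(l+k)+\hat G_z(l-k)-2\hat G_z(l)\bigr)\,e^{-il\cdot x}\,\frac{dl}{(2\pi)^d}
    \;=\;-\tfrac12\int_{\Td}\Delta_k\hat G_z(l)\,e^{-il\cdot x}\,\frac{dl}{(2\pi)^d}
\end{equation*}
(after change of variables $l\mapsto l\mp k$ in the two exponential terms). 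Taking absolute values gives an $x$-independent upper bound, so the supremum over $x$ is trivial.

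Next, the assumption $f(z)\le K$ forces $f_3(z)\le K$, which by the definition \eqref{e:DefF3} yields $|\Delta_k\hat G_z(l)|\le K\,U_{\lambda_z}(l,k)$ pointwise. Substituting the explicit form of $U_{\lambda_z}(l,k)$ from \eqref{e:DefU} leaves me to estimate three integrals of the form $\int_{\Td}\hat C_{\lambda_z}(l+a)\hat C_{\lambda_z}(l+b)\,dl/(2\pi)^d$. For each of them I would substitute $l\mapsto l-a$ and compute Fourier-analytically:
\begin{equation*}
    \int_{\Td}\hat C_{\lambda_z}(l)\,\hat C_{\lambda_z}(l+c)\,\frac{dl}{(2\pi)^d}
    \;=\;\sum_{x\in\Zd} C_{\lambda_z}(x)^2\,e^{-ic\cdot x},
\end{equation*}
where I use the symmetry of $C_{\lambda_z}$. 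Since $C_{\lambda_z}(x)=\sum_{n\ge0}\lambda_z^n D^{*n}(x)\ge0$, the triangle inequality gives the bound $\sum_x C_{\lambda_z}(x)^2=(C_{\lambda_z}\ast C_{\lambda_z})(0)$, valid uniformly in the shift $c\in\{k,-k,2k\}$.

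Assembling: the $\hat C_{\lambda_z}(k)^{-1}$ prefactor of $U_{\lambda_z}$ is pulled out, the three bubble integrals are each dominated by $(C_{\lambda_z}\ast C_{\lambda_z})(0)$, and the overall constant is $\tfrac12\cdot K\cdot 200\cdot 3=300K$, yielding exactly the claimed bound. There is no real obstacle: the only minor point worth being explicit about is the symmetry/positivity of $C_{\lambda_z}$ that legitimizes bounding $|\sum_x C_{\lambda_z}(x)^2 e^{-ic\cdot x}|$ by $(C_{\lambda_z}\ast C_{\lambda_z})(0)$, and the fact that the Fourier inversion is valid because $G_z$ is summable for $z<z_c$.
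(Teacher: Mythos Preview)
Your proof is correct and follows essentially the same route as the paper: both express $[1-\cos(k\cdot x)]G_z(x)$ via Fourier inversion as $-\tfrac12\int\Delta_k\hat G_z(l)\,e^{-il\cdot x}\,dl/(2\pi)^d$, invoke the bootstrap bound $f_3(z)\le K$ to dominate $|\Delta_k\hat G_z(l)|$ by $K\,U_{\lambda_z}(l,k)$, and then reduce the three resulting integrals to the bubble $(C_{\lambda_z}\ast C_{\lambda_z})(0)$.

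The only noteworthy difference is in the handling of the three integrals. The paper introduces the auxiliary function $C_{\lambda_z,k}(x):=\cos(k\cdot x)\,C_{\lambda_z}(x)$, uses the identity $\hat C_{\lambda_z,k}(l)=\tfrac12\bigl(\hat C_{\lambda_z}(l-k)+\hat C_{\lambda_z}(l+k)\bigr)$, and for the cross term $\hat C_{\lambda_z}(l-k)\,\hat C_{\lambda_z}(l+k)$ applies the pointwise inequality $ab\le\tfrac14(a+b)^2$ before integrating. You instead shift variables and apply Parseval directly to obtain $\int\hat C_{\lambda_z}(l)\,\hat C_{\lambda_z}(l+c)\,dl/(2\pi)^d=\sum_x C_{\lambda_z}(x)^2\,e^{-ic\cdot x}$, then bound the modulus by $\sum_x C_{\lambda_z}(x)^2$. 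Your route is slightly more economical and treats all three terms uniformly, whereas the paper's auxiliary-function argument handles the cross term separately. (A small remark: you do not actually need $C_{\lambda_z}(x)\ge0$ for the last step, since $C_{\lambda_z}(x)^2\ge0$ already suffices for the triangle inequality; positivity is irrelevant here.)
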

\noindent
Casually speaking, the multiplication by $\left[1-\cos(k\cdot x)\right]$ yields a factor $\Ci$ at the expense of adding an extra vertex in the bounding ($C$-)diagram.
In fact, we need only that $\Ci O(1)$ is an upper bound.
Although the lemma is applied to the Ising model here, it is valid for any model as long as $f_3(z)\le K$.
\proof[Proof of Lemma \ref{lemmaCosKxGBound}]
Since
\begin{eqnarray}
    \nonumber
    \sup_x \left[1-\cos(k\cdot x)\right] G(x)
    &=& \sup_x\int_{\Td} \e^{-il\cdot x}\left(\G_z(l)-\frac12\left(\G_z(l-k)+\G_z(l+k)\right)\right)
    \frac{\operatorname{d}\!l}{(2\pi)^{d}}\\
    \nonumber
    &\le&\int_{\Td} \left|\G_z(l)-\frac12\left(\G_z(l-k)+\G_z(l+k)\right)\right|\;
    \frac{\operatorname{d}\!l}{(2\pi)^{d}}\\
    &=&\int_{\Td} \left|\frac12\Delta_k\, \G(l)\right|
    \frac{\operatorname{d}\!l}{(2\pi)^{d}},
\end{eqnarray}
our bound $f_3\le K$ implies that
\begin{equation}
    \begin{split}
    &\sup_x \left[1-\cos(k\cdot x)\right] G(x)\\
    &\quad\le100K\, \Ci \int_{\Td} \left(\Cl(l-k)\,\Cl(l+k)+\Cl(l-k)\,\Cl(l)+\Cl(l)\,\Cl(l+k)\right) \frac{\operatorname{d}\!l}{(2\pi)^{d}}.
    \end{split}
\end{equation}
Denoting $C_{\lambda_z,k}(x):=\cos(k\cdot x)\,C_{\lambda_z}(x)$, we observe that
$|C_{\lambda_z,k}(x)|\le C_{\lambda_z}(x)$ and
\begin{equation}
    \hat C_{\lambda_z,k}(l) = \frac12\left(\Cl(l-k)+\Cl(l+k)\right).
\end{equation}
Hence,
\begin{eqnarray}
    \int_{\Td} \left(\Cl(l-k)\,\Cl(l)+\Cl(l)\,\Cl(l+k)\right)
    \frac{\operatorname{d}\!l}{(2\pi)^{d}}
    &=&
    2\int_{\Td} \Cl(l)\, \hat C_{\lambda_z,k}(l)
    \frac{\operatorname{d}\!l}{(2\pi)^{d}}\\
    \nonumber
    &=& 2(C_{\lambda_z}\ast C_{\lambda_z,k})(0)
    \le 2(C_{\lambda_z}\ast C_{\lambda_z})(0).
\end{eqnarray}
Furthermore,
\begin{eqnarray}
    \nonumber
    \Cl(l-k)\,\Cl(l+k)
    &=&\frac14\left[\Cl(l-k)+\Cl(l+k)\right]^2-\frac14\left[\Cl(l-k)-\Cl(l+k)\right]^2\\
    &\le& \frac14\left[\Cl(l-k)+\Cl(l+k)\right]^2
    =\hat C_{\lambda_z,k}(l)^2,
\end{eqnarray}
so that
\begin{equation}
    \int_{\Td} \Cl(l-k)\,\Cl(l+k)
    \frac{\operatorname{d}\!l}{(2\pi)^{d}}
    \le \int_{\Td} \hat C_{\lambda_z,k}(l)^2
    \frac{\operatorname{d}\!l}{(2\pi)^{d}}
    =  (C_{\lambda_z,k}\ast C_{\lambda_z,k})(0)
    \le (C_{\lambda_z}\ast C_{\lambda_z})(0).
\end{equation}
The combination of the above inequalities implies the claim.
\qed
\vspace{.5em}

For $N>0$, our strategy is to break the term $1-\cos(k\cdot x)$ into parts using
\begin{equation}\label{eqSumCosBound}
    1-\cos t\le (2N+3)\sum_{n=0}^N[1-\cos t_n]\qquad\text{for $t=\sum_{n=0}^Nt_n$}
\end{equation}
from \cite[(4.51)]{BorgsChayeHofstSladeSpenc05b},
which is reminiscent of the decomposition of squares in \citeAkira{(5.39)}.

In the case $N=1$ this allows for the following calculation.
Recall from Prop.\ \ref{propDiagrammaticBoundAkira} the upper bound on $\pi_\Lambda^{\sss (1)}(x)$.
An application of (\ref{eqSumCosBound}) for $N=1$ yields
\begin{eqnarray}\nonumber
    \sum_x [1-\cos(k\cdot x)]\pi^{\sss (1)}_{\Lambda}(x)
    &\le& \sum_x [1-\cos(k\cdot x)]\;\;\raisebox{-14pt}{\includegraphics[scale=.3]{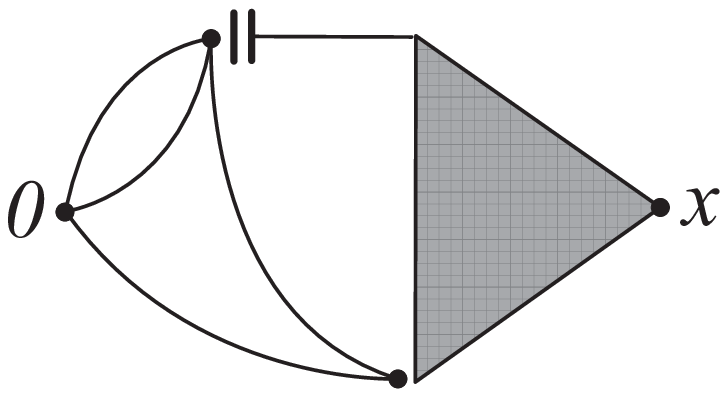}}\\
    \label{eqCosPiBd1}
    &\le& 5\left( \raisebox{-14pt}{\includegraphics[scale=.3]{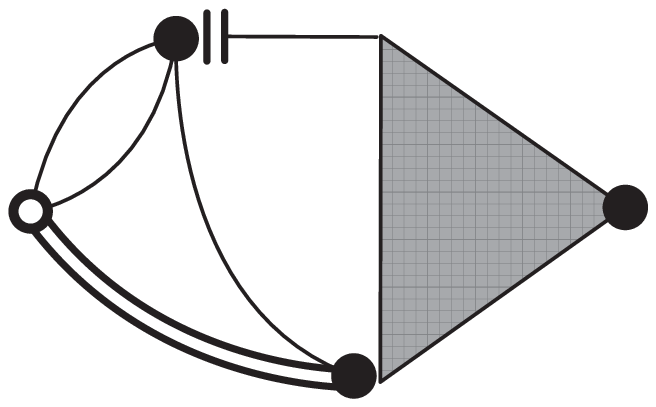}}
           + \raisebox{-14pt}{\includegraphics[scale=.3]{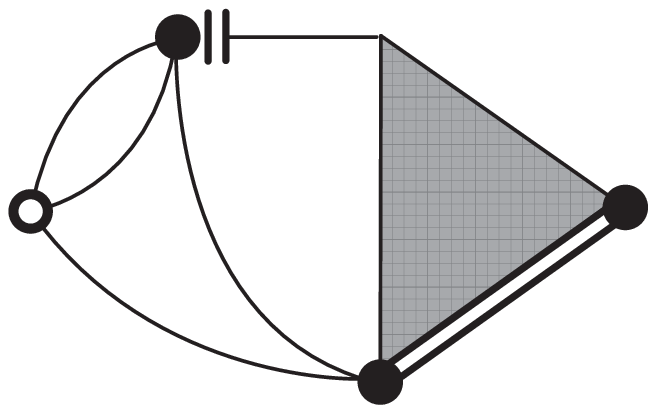}} \right).
\end{eqnarray}
In (\ref{eqCosPiBd1}) we extend our pictorial representation to incorporate factors of the form $[1-\cos(k\cdot x)]$.
Here a double line between two points, say $y_1$ and $y_2$, represents a factor $[1-\cos(k\cdot (y_1-y_2))]\,G(y_1-y_2)$,
while, as before, a normal line represents a factor $G(y_1-y_2)$.
For the second summand in (\ref{eqCosPiBd1}) , there is not a single two-point function between the two endpoints of the double line. Here our understanding is that
\begin{equation}\label{eqAkira35}
    \raisebox{-14pt}{\includegraphics[scale=.3]{pi1cos2-1}}
    =\sum_{x,y}[1-\cos(k\cdot (x-y))]\;\;\raisebox{-18pt}{\includegraphics[scale=.3]{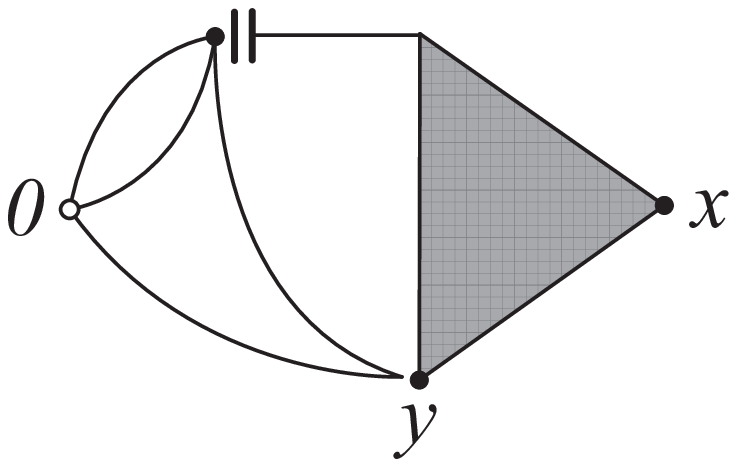}\;\;.}
\end{equation}
In other words, the double line between the two points $y$ and $x$ gives rise to the factor $[1-\cos(k\cdot (x-y))]$.

The first term in (\ref{eqCosPiBd1}) is estimated like
\begin{equation}\label{eqCosPiBd7}
\raisebox{-14pt}{\includegraphics[scale=.3]{pi1cos1-1}}
\quad\le \quad
\raisebox{-14pt}{\includegraphics[scale=.3]{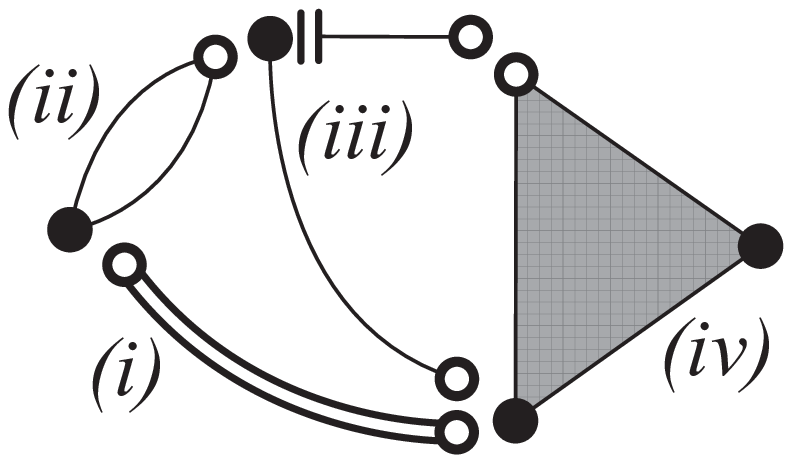}\;\;,}
\end{equation}
which yields factors $B\Ci$ arising from \emph{(i)} by Lemma \ref{lemmaCosKxGBound}, $B$ from \emph{(ii)}, $\tilde B$ from \emph{(iii)}, and $O(1)$ from \emph{(iv)} by Claim \ref{lemmaBoundPprime}.
Thus,
\begin{equation}\label{eqAkira34}
    \raisebox{-14pt}{\includegraphics[scale=.3]{pi1cos1-1}}
    \le \Ci O(\beta).
\end{equation}

For the second term in (\ref{eqCosPiBd1}) we bound
\begin{equation}\label{eqAkira36}
    \raisebox{-14pt}{\includegraphics[scale=.3]{pi1cos2-1}}
    \quad\le \quad
    \raisebox{-14pt}{\includegraphics[scale=.3]{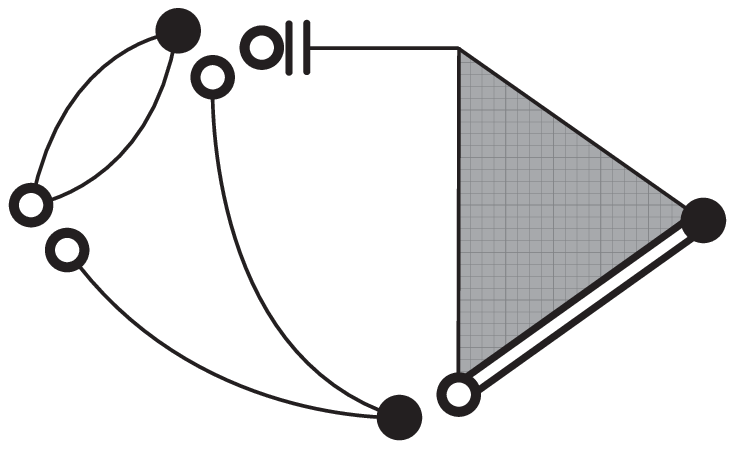}}
    \quad\le \quad
    B^2\;\cdot\;\;\raisebox{-14pt}{\includegraphics[scale=.3]{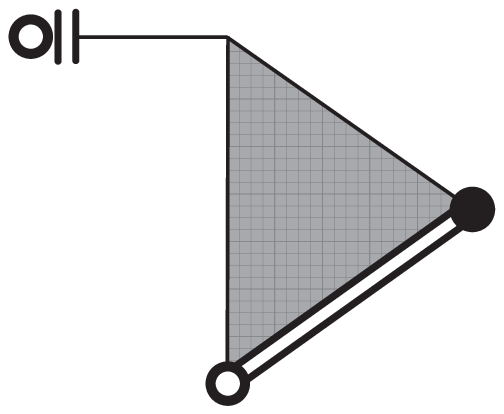}\;\;,}
\end{equation}
The remaining factor $\sup_y\sum_{w,x}[1-\cos(k\cdot x)]\,\tau D(w-y)Q'_{0}(w,x)$ is bounded by the following claim:
\begin{claim}\label{claimA}
Under the assumptions of Proposition \ref{propIsingDiagram},
\begin{equation}\label{eqClaimA1}
    \raisebox{-14pt}{\includegraphics[scale=.3]{pi1cos2-3}}
    =\sup_y\sum_{w,x}[1-\cos(k\cdot x)]\,\tau D(w-y)Q'_{0}(w,x)
    \le \Ci O(\beta).
\end{equation}
\end{claim}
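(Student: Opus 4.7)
The plan is to mimic the cosine-distribution argument already used for $\pi^{(1)}$ in (\ref{eqCosPiBd1})--(\ref{eqAkira36}), now extended to the full expansion
\begin{equation*}
Q'_0(w,x)=\sum_{z}\bigl(\delta_{w,z}+\tilde G(w,z)\bigr)\sum_{j\ge0}P_0^{\prime(j)}(z,x).
\end{equation*}
For each fixed $j$, the diagram $P_0^{\prime(j)}(z,x)$ contains a graph-theoretic path from the marked vertex $0$ to the endpoint $x$: for $j=0$ this is simply $0\to z\to x$ along $G(0,z)$ and one of the bubble edges $G(z,x)$, while for $j\ge 1$ the path runs along the alternating bubble backbone from $z$ to $x$ with $0$ attached to one of the $2j-1$ backbone two-point functions. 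Writing $x=\sum_i e_i$ as the telescoping sum of edge displacements along the chosen path and applying the trigonometric inequality (\ref{eqSumCosBound}), I upper bound $[1-\cos(k\cdot x)]$ by a sum of at most $O(j)$ terms in each of which the cosine factor is localized on a single edge of the path.

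For every such contribution the cosine-upgraded edge carries a factor $[1-\cos(k\cdot e)]\,G(e)$, which by Lemma \ref{lemmaCosKxGBound} is bounded uniformly by $\Ci\cdot O(1)$ and is therefore extracted as a supremum. The residual diagram is then structurally the same as a sub-diagram of $P_0^{\prime(j)}$ with one $G$-edge replaced by a constant, and can be bounded by the same bubble-extraction techniques used in the proofs of Claim \ref{lemmaBoundPprime} and of the estimate (\ref{eqAkira20}): the bubbles and chains of bubbles are controlled by $B$, $\tilde B$ and $\tilde\psi$, while the prefix factor $\tau D(w-y)\bigl(\delta_{w,z}+\tilde G(w,z)\bigr)$ contributes an additional $O(\beta)$ from the ``nonzero line segment'' bound (\ref{eqAkira5}). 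For $j\ge 1$ the $j$-fold product of $\tilde\psi$ appearing in $P^{\prime(j)}$ yields an extra factor $O(\beta)^j$.

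Combining, the $j$-th contribution is at most $\Ci\cdot\mathrm{poly}(j)\cdot O(\beta)^{j\vee 1}$, and summing the geometric-like series over $j$ gives the desired bound $\Ci\,O(\beta)$ provided $\beta$ is sufficiently small.

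The main obstacle is the diagrammatic bookkeeping once the cosine-bearing edge has been pulled out via Lemma \ref{lemmaCosKxGBound}: depending on whether the distinguished edge sits on a ``horizontal'' or ``vertical'' bubble line, and whether it lies on the backbone or inside one of the $\tilde\psi$ chains, the residual diagram has a slightly different shape and must be decomposed accordingly, in the manner demonstrated in (\ref{eqAkira31}), (\ref{eqAkira24}) and (\ref{eqAkira30}). In each case, translation invariance and the Cauchy--Schwarz-type bound (\ref{eqAkira19}) reduce the estimate to a product of closed or open bubbles and chains of bubbles whose size is $O(\beta)^{j\vee 1}$, so the case analysis is cosmetic rather than substantive and the overall order of magnitude is the same.
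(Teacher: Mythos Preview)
Your proposal is correct and follows essentially the same route as the paper's proof: expand $Q'_0$ via \eqref{e:Q'-def} as a sum over $j\ge 0$ of contributions from $P'^{(j)}_0$, distribute the factor $[1-\cos(k\cdot x)]$ along a backbone path using \eqref{eqSumCosBound}, extract $\Ci$ from the cosine-weighted edge via Lemma~\ref{lemmaCosKxGBound}, and bound the residual diagram by the bubble/chain estimates of Claim~\ref{lemmaBoundPprime}; the paper obtains the explicit level-$j$ bound $O(j^2)\,O(\beta)^{j+1}\,\Ci$ (your $\mathrm{poly}(j)\,O(\beta)^{j\vee1}$ together with the prefix $O(\beta)$), which sums to $\Ci\,O(\beta)$. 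One small imprecision: for $j=0$ the residual diagram after stripping the cosine edge is $O(1)$ rather than $O(\beta)$, so the required factor of $\beta$ there comes entirely from the nonzero-line prefix, not from the $P'^{(0)}$ block itself.
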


\proof
By (\ref{e:Q'-def}),
\begin{equation}\label{eqCosPiBd4}
    \sup_y\sum_{w,x}[1-\cos(k\cdot x)]\,\tau D(w-y)Q'_{0}(w,x)
    =\sup_y\sum_{w,x,z}[1-\cos(k\cdot x)]\,\tau D(w-y)\sum_{j=0}^\infty
        \left(\delta_{w,z}+\tilde G_z(w,z)\right)P'^{\sss (j)}_0(z,x).
\end{equation}
In diagrams, that is
\begin{equation}\label{eqCosPiBd2}
\raisebox{-14pt}{\includegraphics[scale=.3]{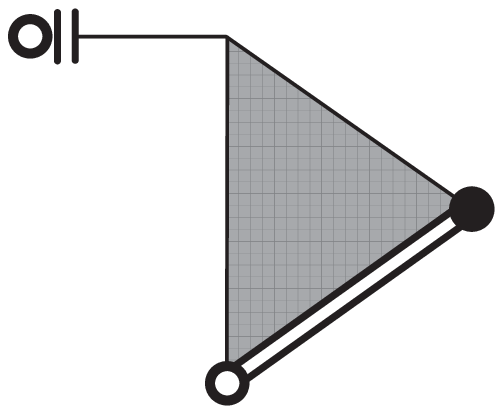}}
\quad\le \quad
\raisebox{-14pt}{\includegraphics[scale=.3]{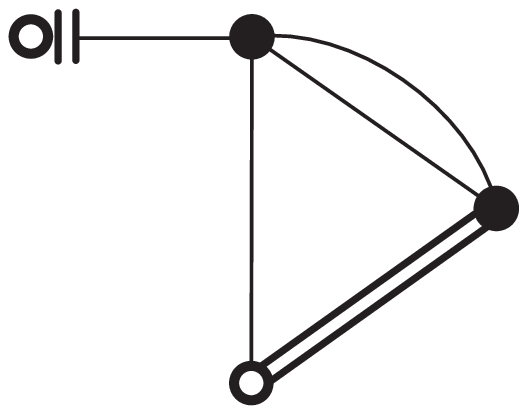}}
+\raisebox{-14pt}{\includegraphics[scale=.3]{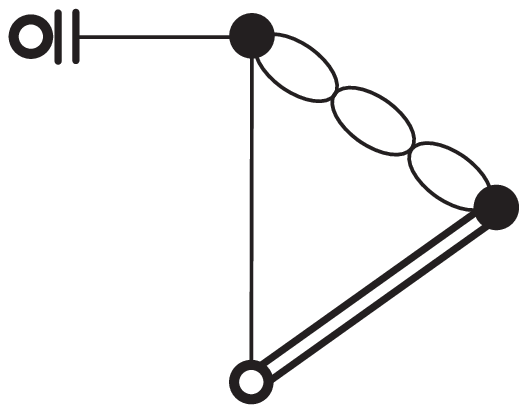}}
+\left( \hspace{1mm}\raisebox{-14pt}{\includegraphics[scale=.3]{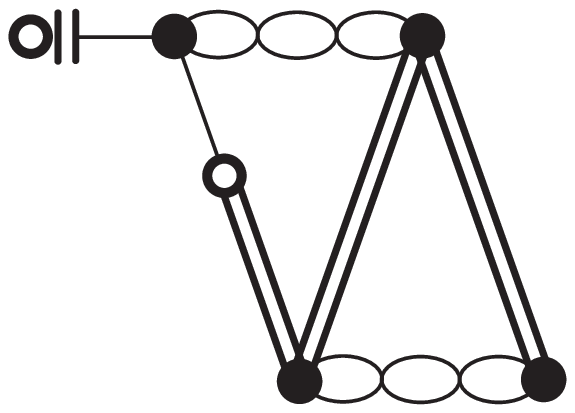}}
       +\raisebox{-14pt}{\includegraphics[scale=.3]{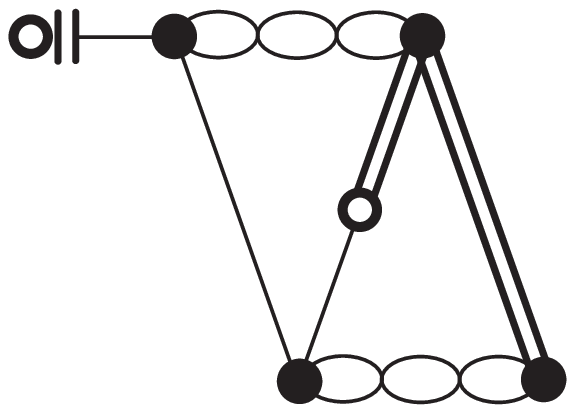}}
       +\raisebox{-14pt}{\includegraphics[scale=.3]{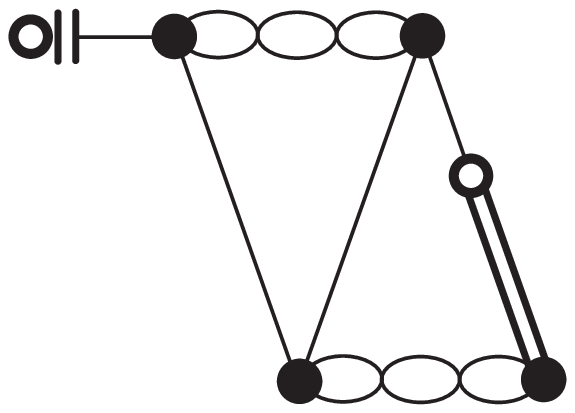}}
\right)
+\cdots,
\end{equation}
where contributions according to $j=0,1,2$ are shown explicitly and higher order contributions are indicated by dots.
When we have a series of connected double lines (like in the first term in parenthesis), this indicates a factor $[1-\cos(k\cdot (y_1-y_2))]$, where $y_1$ is the starting point of the lines, and $y_2$ is the endpoint.
We then use (\ref{eqSumCosBound}) to decompose the series of double lines.
For example, for the first term in parenthesis we obtain
$$  \raisebox{-14pt}{\includegraphics[scale=.3]{Qp-cos-2a}}
    \le 7\left(\hspace{1mm}\raisebox{-14pt}{\includegraphics[scale=.3]{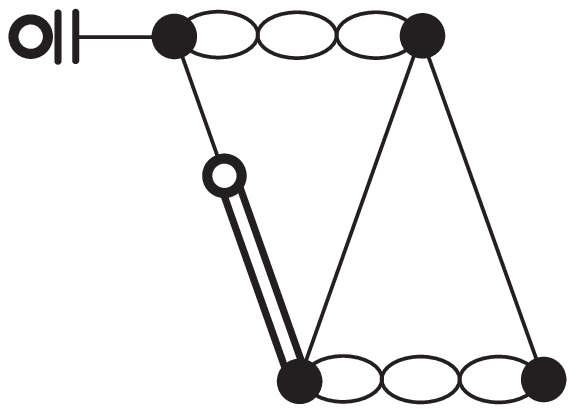}}
       +\raisebox{-14pt}{\includegraphics[scale=.3]{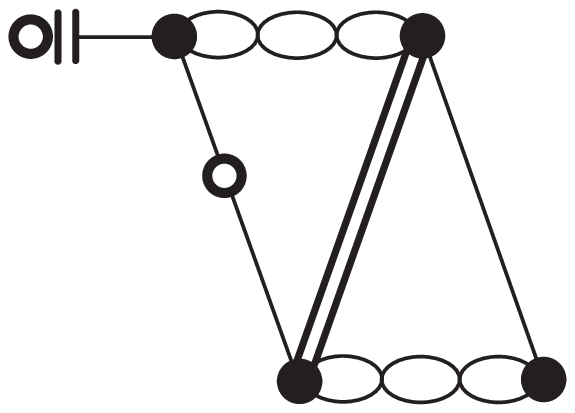}}
       +\raisebox{-14pt}{\includegraphics[scale=.3]{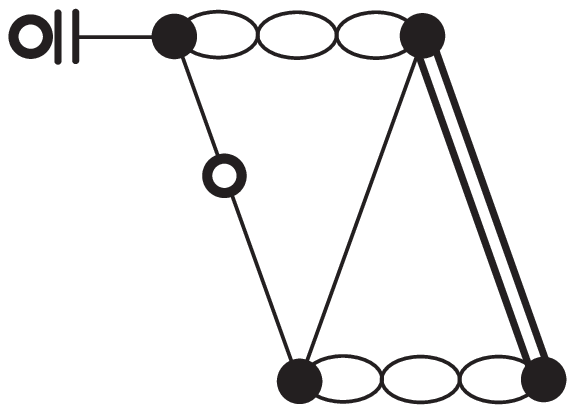}}\right),$$
and a similar bound holds for the second term.
With Lemma \ref{lemmaCosKxGBound} it follows that the contribution from $j=2$ in (\ref{eqCosPiBd2}) (the term in parenthesis) is bounded by $O(\beta)^3\Ci$.
The method can be generalized to $j\ge3$ showing
\begin{equation}\label{eqCosPiBd3}
    \sup_y\sum_{w,x,z}[1-\cos(k\cdot x)]\,\tau D(w-y)
        \left(\delta_{w,z}+\tilde G_z(w,z)\right)P'^{\sss (j)}_0(z,x)
    \le O(j^2)\,O(\beta)^{j+1}\Ci.
\end{equation}
By (\ref{eqCosPiBd4}), this is sufficient for (\ref{eqClaimA1}).
\qed
\vspace{0.5cm}

For $N>1$, we proceed by distributing the spatial displacement $1-\cos(k\cdot x)$ along the ``bottom line'' of the diagram. E.g., for $N=3$, this yields
\begin{eqnarray}\nonumber
    \sum_x [1-\cos(k\cdot x)]\pi^{\sss (3)}_{\Lambda}(x)
    &\le& \sum_x [1-\cos(k\cdot x)]\;\;\raisebox{-14pt}{\includegraphics[scale=.3]{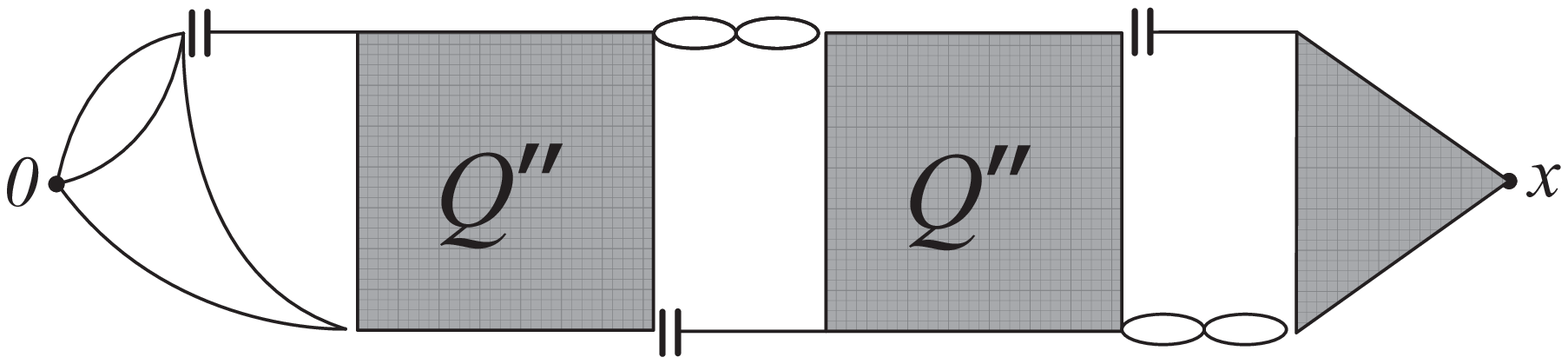}}\\
    \label{eqCosPiBd5}
    &=& \;\;\raisebox{-14pt}{\includegraphics[scale=.3]{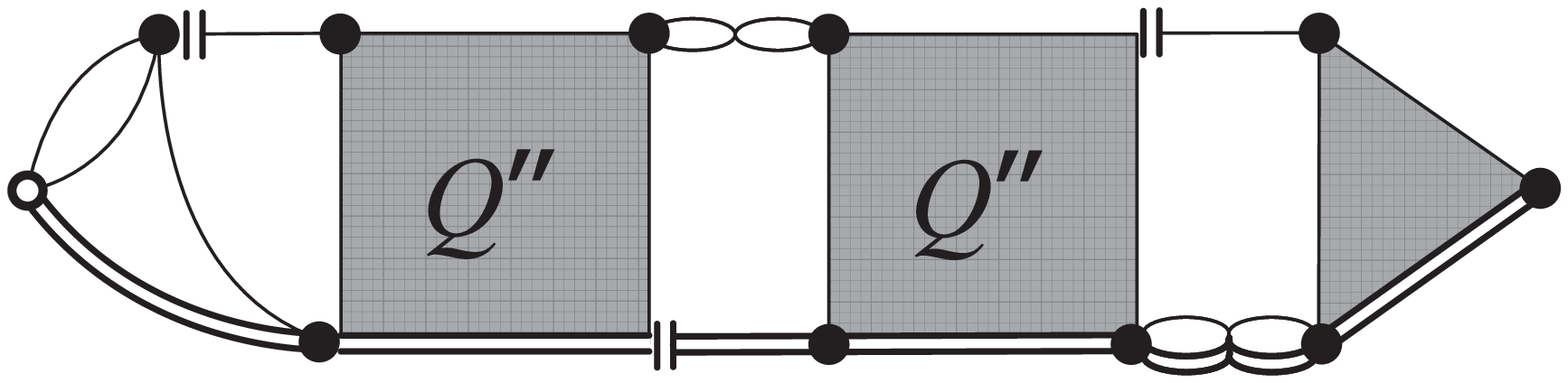}\;\;.}
\end{eqnarray}
By (\ref{eqSumCosBound}), the right hand side of (\ref{eqCosPiBd5}) is bounded above by $9$ times
\begin{align}\label{eqCosPiBd6}
    & \raisebox{-14pt}{\includegraphics[scale=.3]{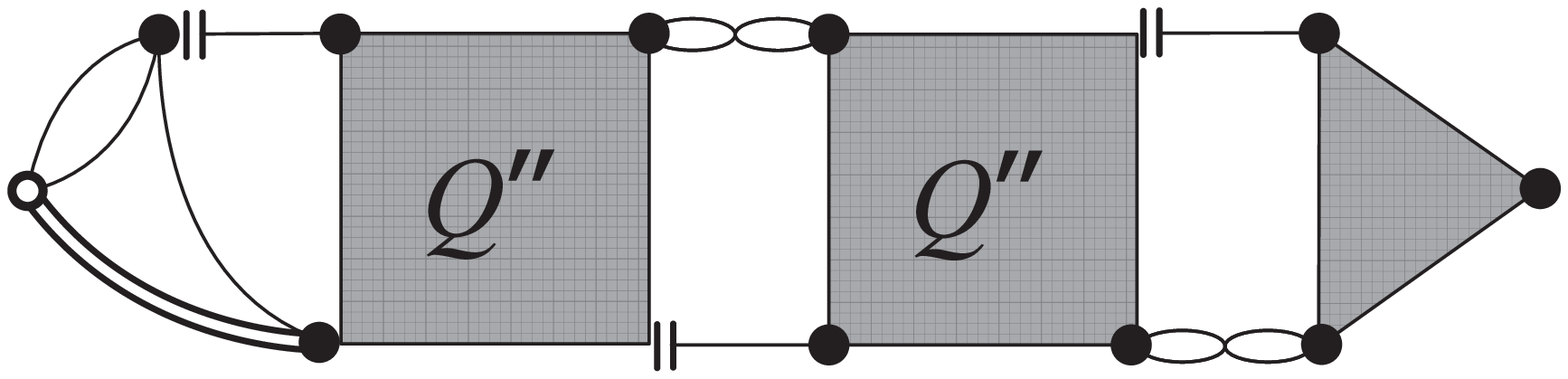}}
     \;+\;\raisebox{-14pt}{\includegraphics[scale=.3]{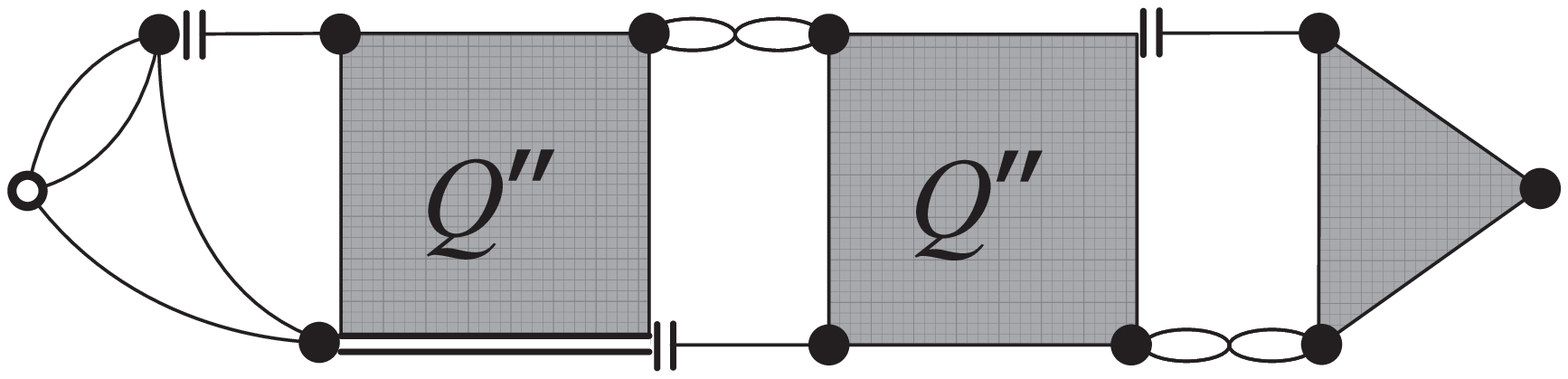}}\nnb
    {}+\,\;&\raisebox{-14pt}{\includegraphics[scale=.3]{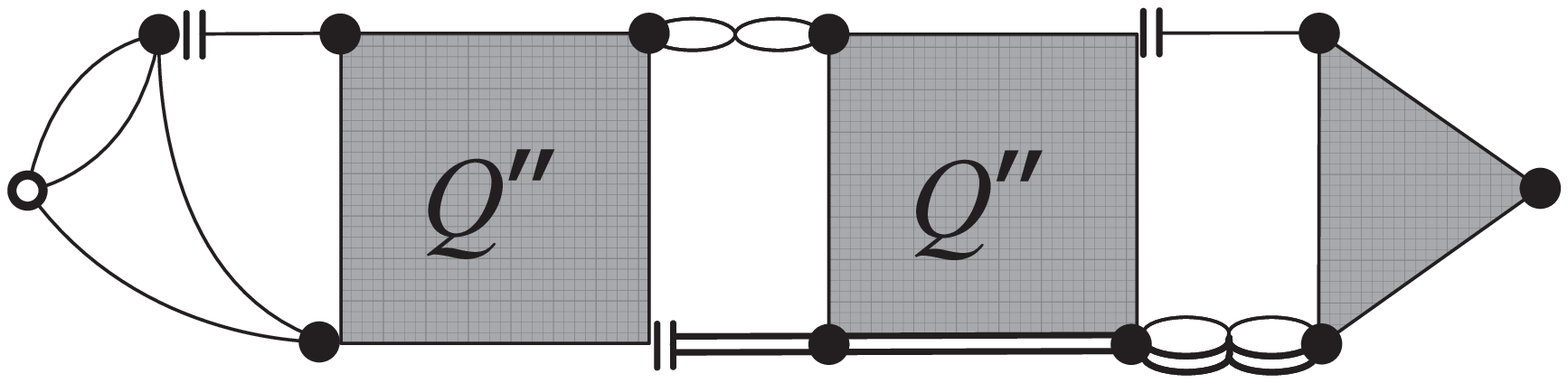}}
     \;+\;\raisebox{-14pt}{\includegraphics[scale=.3]{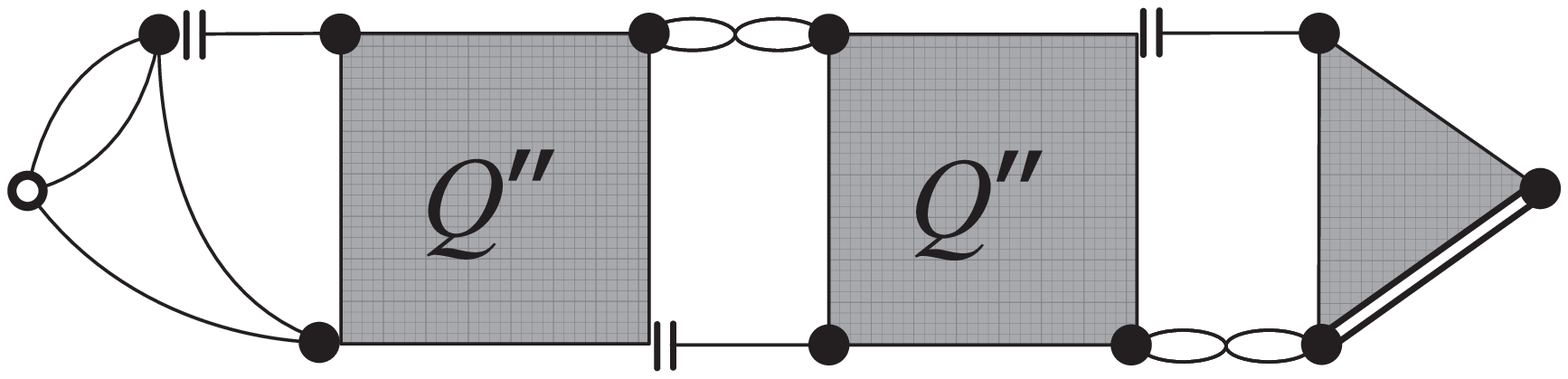}\;\;.}
\end{align}
In the following we refer by (I), (II), (III) and (IV) to the four terms in (\ref{eqCosPiBd6}), respectively. In fact, all 4 terms are bounded by $\Ci\,O(\beta)^N$, as we will show now.

The bound on (IV) is an immediate consequence of (i) and (iii) below (\ref{eqAkira1}), and Claim \ref{claimA}. For the bound on (I), we use translation invariance to obtain the factorization
\begin{equation}
 \underbrace{\raisebox{-14pt}{\includegraphics[scale=.3]{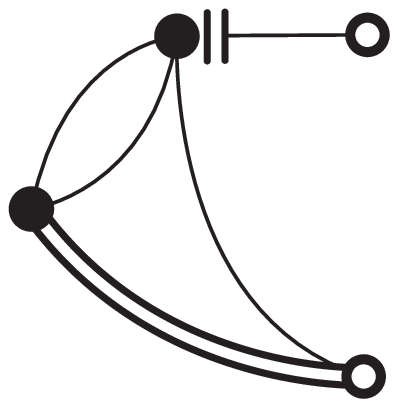}}}_{\text{(I-1)}}\;\;
 \underbrace{\raisebox{-14pt}{\includegraphics[scale=.3]{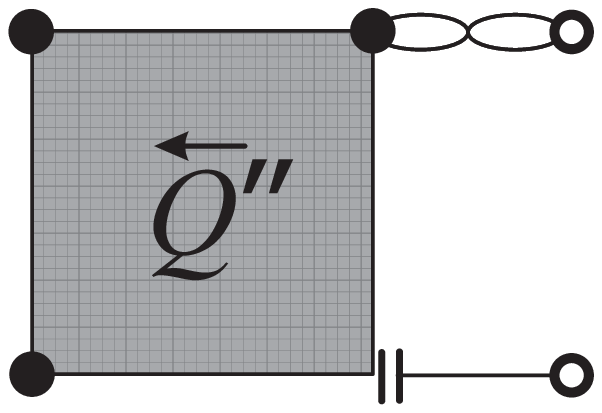}}}_{\text{(I-2)}}\;\;
 \underbrace{\raisebox{-14pt}{\includegraphics[scale=.3]{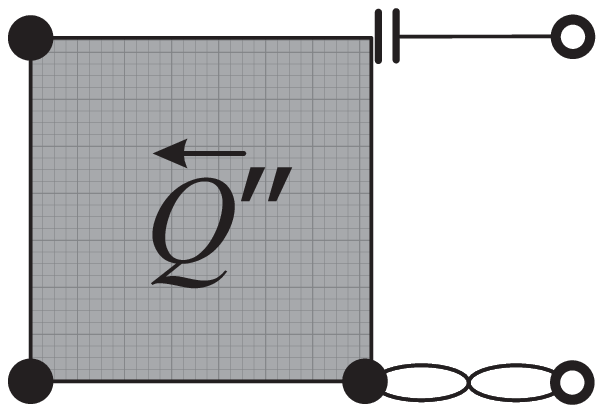}}}_{\text{(I-3)}}\;\;
 \underbrace{\raisebox{-14pt}{\includegraphics[scale=.3]{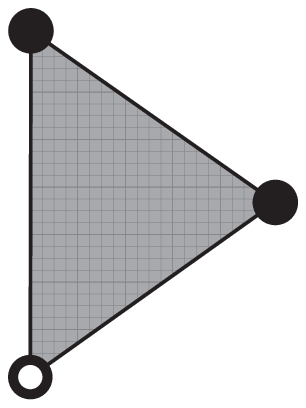}}}_{\text{(I-4)}}
 \raisebox{-14pt}{\;\;.}
\end{equation}
The terms indicated by $\overleftarrow{Q}''$ in the diagram are obtaines from $Q''$ by shifting the two-point functions hanging off the left side of the $Q''$-box to the next factor on the left hand side, i.e.\ (compare with (\ref{eqQppPict}))
\begin{align}
    \overleftarrow{Q}''_{0,v}(y,x)&=
    \sum_{z,z'} P''_{0,v}(y,z)\,\tau D(z'-z)\big(\delta_{z',x}+\tilde G(z',x)\big)\nonumber\\&%
    \quad+\sum_{z,z',w}\tilde G(y,w)\,P'_{0}(w,z)\,\psi(y,v)\,\tau D(z'-z)\big(\delta_{z',x}+\tilde G(z',x)\big)\\
    \nonumber
    &=\raisebox{-14pt}{\includegraphics[scale=.3]{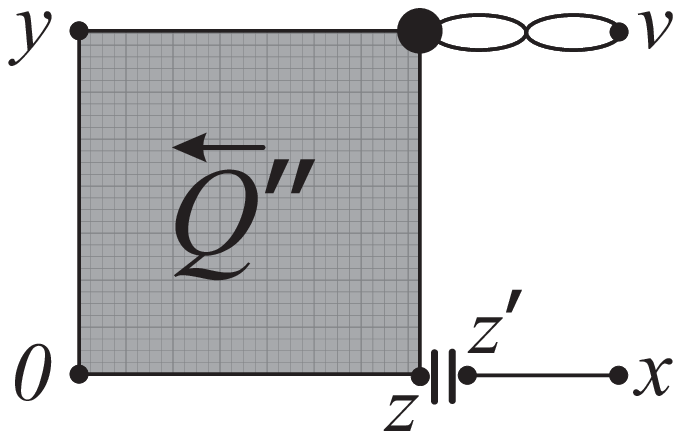}\;\;.}
\end{align}
The first factor (I-1) is bounded by $\Ci O(\beta)$ as in (\ref{eqCosPiBd7}).
The middle terms (I-2) and (I-3) are equal to $\sup_x\sum_{v,y}\overleftarrow{Q}''_{0,v}(y,x)$.
Performing calculations as in (\ref{eqAkira20})--(\ref{eqAkira22}), it can be shown that actually
\begin{equation}\label{eqAkira27}
    \sup_x\sum_{v,y}\overleftarrow{Q}''_{0,v}(y,x)
    \le O(\beta),
\end{equation}
and this term occurs $N-1$ times in (I).
The last term (I-4) is bounded by $O(1)$, cf.\ Claim \ref{lemmaBoundPprime}.
The bounds on (I-1)--(I-4) show that $\text{(I)}\le \Ci O(\beta)^N$.

The terms (II) and (III) are bounded in a similar fashion by product structures:
\begin{equation}
    \text{(II)} \le \raisebox{-14pt}{\includegraphics[scale=.3]{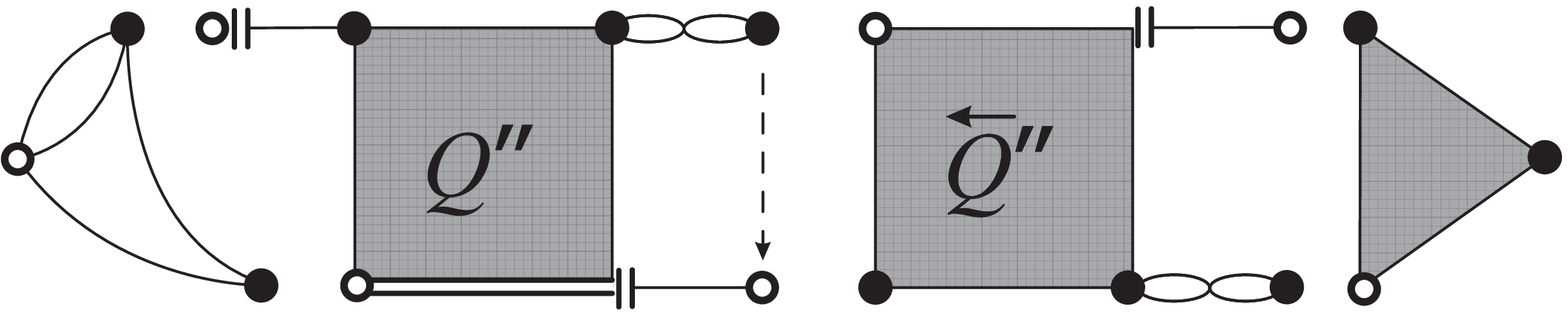}\;\;,}
\end{equation}
\begin{equation}
    \text{(III)} \le \raisebox{-14pt}{\includegraphics[scale=.3]{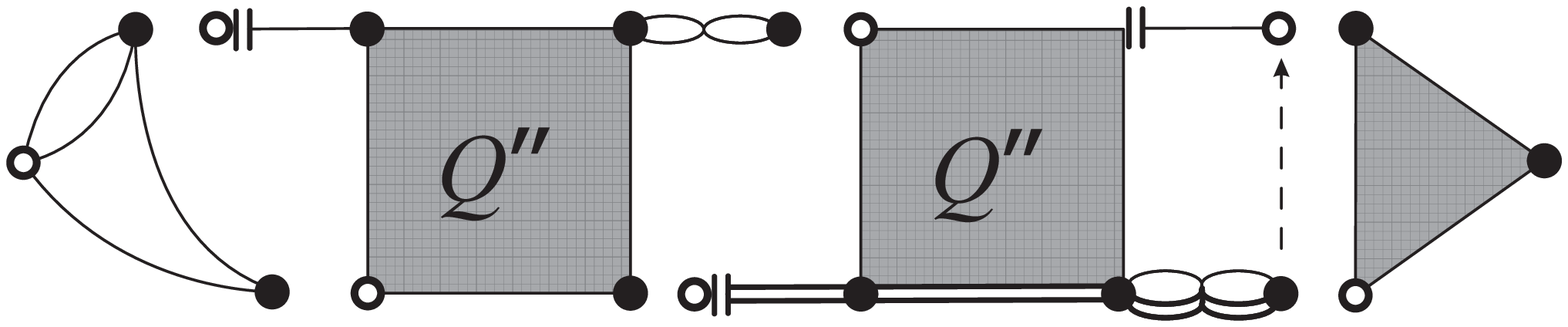}\;\;.}
\end{equation}

The term $\sum_{v,x}P'^{\sss (0)}_v(0,x)$ on the left hand side is bounded by $O(1)$ by (\ref{eqAkira17});
the term $\sum_{u,x}P'_0(u,x)$ (the gray triangle on the right) is bounded by $O(1)$ by Claim \ref{lemmaBoundPprime}.
The terms involving $Q''$ and $\overleftarrow{Q}''$ are bounded by $O(\beta)$ by (\ref{eqAkira20}) and (\ref{eqAkira27}), and together there are $N-2$ of these terms.

It remains to show that
\begin{equation}\label{eqAkira28}
    \raisebox{-14pt}{\includegraphics[scale=.3]{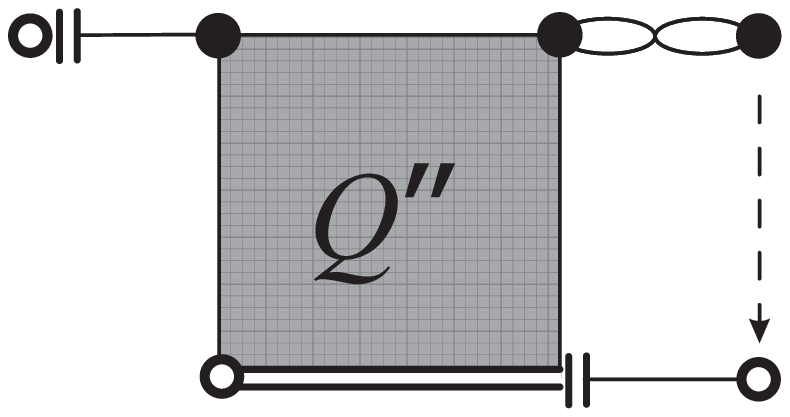}}\le \Ci O(\beta)^2,
    \qquad
    \raisebox{-14pt}{\includegraphics[scale=.3]{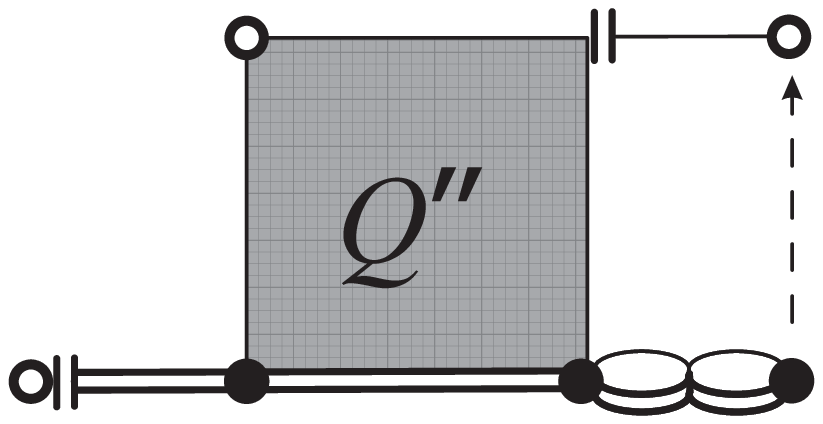}}\le \Ci O(\beta)^2.
\end{equation}
Here the dashed arrow indicates that the supremum is taken over the \emph{difference} between the two vertices at top and bottom of the arrow; see also \citeAkira{(5.46)}.
In order to achieve the bounds in (\ref{eqAkira28}) we proceed as follows.
First we use (\ref{eqSumCosBound}) to distribute the spatial displacement of $1-\cos(k\cdot x)$ to single two-point functions $G$ or $\tilde G$.
Secondly, from each of the emerging summands, we eliminate the term of the form
$\sup_{x,y}[1-\cos(k\cdot (y-x))]G(x,y)$ (where $x$ and $y$ are chosen appropriately),
and bound it by $\Ci O(1)$, cf.\ Lemma \ref{lemmaCosKxGBound}.
Finally, we bound the remaining quantity in the same fashion as in (\ref{eqAkira20})--(\ref{eqAkira22}).
Note that the removed bond is compensated by an extra bond hanging off the lower / upper right corner. The factor $\beta^2$ arises from the bubbles involving the two non-zero two-point functions hanging off the box.
This finally leads to the required bound
\begin{equation}\label{eqAkira37}
\text{(II)}+\text{(III)} \le \Ci O(\beta)^N,
\end{equation}
and thus proves (\ref{eqIsingDiagramResult2}).
This completes the proof of Proposition \ref{propIsingDiagram}.

\vspace{0.5cm}
\noindent {\bf Acknowledgement. }
We thank Roberto Fern\'andez and Aernout van Enter for inspiring discussions.
This work was supported by the Netherlands Organization for Scientific Research (NWO).
MH visited the University of Bath on a grant from the RDSES programme of the European Science Foundation.

\def\cprime{$'$}

\end{document}